\DeclareMathOperator{\Hom}{Hom}
\DeclareMathOperator{\lcm}{lcm}
\DeclareMathOperator{\Ima}{Im}
\DeclareMathOperator{\id}{Id}
\DeclareMathOperator{\PD}{PD}
\DeclareMathOperator{\SW}{SW}
\DeclareMathOperator{\rk}{rank}
\DeclareMathOperator{\SO}{SO}
\newcommand{\IZ}{\mathbb{Z}}
\newcommand{\IR}{\mathbb{R}}
\newcommand{\IQ}{\mathbb{Q}}
\newcommand{\IC}{\mathbb{C}}
\newcommand{\IN}{\mathbb{N}}
\newcommand{\IK}{\mathbb{K}}
\newcommand{\Eilen}{\operatorname{K}}
\newcommand{\Map}[3]{#1\colon #2 \rightarrow #3}
\newcommand{\Thur}[1]{\left\| #1 \right\|_T}
\newcommand{\TorN}[1]{\left\| #1 \right\|_{tor}}
\newcommand{\Comp}[1]{\operatorname{x} #1} 
\newcommand{\TEuler}[1]{\chi_- #1}
\newcommand{\ZkZ}[1]{\IZ_{#1}}
\newcommand{\W}[1]{\operatorname{W}(#1)}
\newcommand{\width}{\operatorname{width}}
\newcommand{\Spinc}{\operatorname{Spin}^c}
\newcommand{\GL}[1]{\operatorname{GL}(#1)}
\newcommand{\Bas}[1]{\operatorname{bas}#1}
\newcommand{\MapT}{\operatorname{M}}
\theoremstyle{plain}
\newtheorem{thm}{Theorem}[section]
\newtheorem*{thm*}{Theorem}
\newtheorem{prop}[thm]{Proposition}
\newtheorem{lem}[thm]{Lemma}
\theoremstyle{definition}
\newtheorem{defn}[thm]{Definition}
\newtheorem{constr}[thm]{Construction}
\theoremstyle{remark}
\newtheorem{rem}[thm]{Remark}
\begin{document}
\title{Minimal genus in circle bundles over 3-manifolds}
\author{Matthias Nagel}
\email{nagel@cirget.ca}
\urladdr{http://thales.math.uqam.ca/~matnagel/}
\address{Département  de  Mathématiques,  Université du Québec à Montréal, Canada}

\begin{abstract}
An estimate is proven for the genus function
in circle bundles over irreducible 3-mani\-folds. 
This estimate is an equality in many cases,
and it relates the minimal genus of the surfaces
representing a given homology class to 
the self-intersection of the class and the Thurston norm 
in the underlying 3-manifold.

The main theorem extends a theorem of Friedl-Vidussi to graph manifolds
and removes their restriction on the Euler class of the circle bundle.
The article also contains computations, of twisted Reidemeister torsion of
graph manifolds, that are of independent interest.
\end{abstract}
\maketitle
\section{Introduction}
Thurston \cite{Thurston86} considered the following question:
What is the minimal complexity of a surface representing
a fixed homology class?
He quantified the complexity by the following adaption
of the Euler characteristic. 
For a closed surface $\Sigma$ with components
$\Sigma_i$ he defined 
\begin{align*}
\TEuler (\Sigma) := \sum_i \max \left(0, - \chi(\Sigma_i)\right),
\end{align*}
where $\chi(\Sigma_i)$ denotes the Euler characteristic of $\Sigma_i$.

In a closed $3$-manifold $N$ one can associate
to every $\sigma \in H_2(N;\IZ)$ a natural number $\Thur \sigma$,
which is the minimum of the numbers $\TEuler(\Sigma)$ taken over all the embedded surfaces $\Sigma$ representing
the class $\sigma$. Thurston proved that this assignment defines a semi-norm $\Thur .$ on $H_2(N;\IZ)$,
i.e. the inequality $\Thur{\sigma + \rho} \leq \Thur \sigma + \Thur \rho$ and the equality
$\Thur {n \sigma} = |n| \Thur \sigma$ hold for all $n \in \IZ$ and all $\sigma, \rho \in H_2(N;\IZ)$.
 
Verbatim the same definition gives rise to a complexity function 
$\Comp: H_2(W; \IZ) \rightarrow \IN$ on a $4$-manifold $W$. 
It is more mysterious and much less understood than the Thurston norm $\Thur .$.
In fact this complexity is not semi-linear as can be easily seen by the adjunction
inequality.

To observe the transition from dimension three to four 
a first step might be to look at circle bundles over $3$-manifolds. 
In this case we obtain the following in Theorem~\ref{Thm:AResult}.
\begin{thm*}
Let $N$ be an irreducible and closed $3$-manifold.
Assume that $N$ is not a Seifert fibred space and not covered by a torus bundle.
Let $\Map p W N$ be an oriented circle bundle. Then each class
$\sigma \in H_2(W;\IZ)$ satisfies the inequality 
\begin{align*}
\Comp(\sigma) \geq | \sigma \cdot \sigma | + \Thur {p_* \sigma}.
\end{align*}
\end{thm*}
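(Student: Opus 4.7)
The plan is to follow and extend the strategy of Friedl-Vidussi: produce a finite-dimensional representation of $\pi_1(N)$ whose twisted Reidemeister torsion detects the Thurston norm of $p_*\sigma$, pull it back to $W$, and combine it with the self-intersection pairing to isolate the two summands on the right-hand side.

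First I would fix an embedded oriented surface $\Sigma \subset W$ with $[\Sigma]=\sigma$ and $\TEuler{(\Sigma)}=\Comp(\sigma)$, and isotope it into general position with respect to $p$. This yields a decomposition $\Sigma = \Sigma_h \cup \Sigma_v$, where $\Sigma_v = p^{-1}(\Gamma)$ is a disjoint union of fiber tori over embedded curves $\Gamma \subset N$ (so $\TEuler{(\Sigma_v)} = 0$) and $\Sigma_h$ is a horizontal piece on which $p$ restricts to an immersion. In particular $p_*[\Sigma_h] = p_*\sigma$, which already yields the weak inequality $\Comp(\sigma) = \TEuler{(\Sigma_h)} \geq \Thur{p_*\sigma}$ via the singular Thurston norm.

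To sharpen this and recover the self-intersection summand, I would use the explicit torsion computations of the article to produce a representation $\alpha\colon \pi_1(N)\to\operatorname{GL}_k(\IC)$ whose twisted Alexander polynomial $\Delta^\phi_\alpha(t)$, for $\phi\in H^1(N;\IZ)$ dual to $p_*\sigma$, has degree $k\cdot\Thur{p_*\sigma}$. Pulling back along $p$ yields a representation $p^*\alpha$ of $\pi_1(W)$, and a Mayer-Vietoris computation relating the twisted torsions of $W$, $\Sigma_h$, and $\Sigma_v$ produces an adjunction-style inequality in which the normal Euler number of $\Sigma$ contributes an additional $|\sigma\cdot\sigma|$ to the degree bound:
\begin{align*}
\TEuler{(\Sigma)} \;\geq\; \Thur{p_*\sigma} + |\sigma\cdot\sigma|.
\end{align*}

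The main obstacle is the construction of a detecting representation $\alpha$ when $N$ is a graph manifold, since the tools used by Friedl-Vidussi are tailored to the hyperbolic setting. I would instead assemble $\alpha$ piecewise using the JSJ decomposition, computing the twisted Reidemeister torsion of each Seifert piece directly and gluing along the JSJ tori; the hypothesis that $N$ is neither Seifert fibred nor covered by a torus bundle is exactly what guarantees that a non-trivial detecting character exists on at least one piece and survives the gluing. Removing the Euler-class restriction of Friedl-Vidussi then follows by tracking carefully how $e(p)$ twists the pulled-back torsion on $W$, which is precisely the content of the graph-manifold torsion computations announced in the abstract.
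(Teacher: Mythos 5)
Your 3-manifold-side picture is essentially right: you correctly identify that one should build a detecting character piecewise along a graph decomposition and glue it, and that the Seifert/torus-bundle exclusions are what ensure the character survives; this is exactly what Theorem~\ref{GraphRes} and Proposition~\ref{GraphThurstonTorsion} accomplish. But the 4-manifold side of your proposal does not match how the estimate is actually obtained, and two of your key steps would fail. First, an embedded surface in a circle bundle over a 3-manifold does not admit a decomposition $\Sigma = \Sigma_h \cup \Sigma_v$ with $\Sigma_v$ a union of fibre tori and $p|_{\Sigma_h}$ an immersion after a generic isotopy; that kind of horizontal/vertical normal form is a special feature of Seifert fibred 3-manifolds and has no analogue here. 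Even if it did, $\Sigma_h$ would have boundary, so $p_*[\Sigma_h]$ is not a class in $H_2(N;\IZ)$ and the inequality $\TEuler{(\Sigma_h)} \geq \Thur{p_*\sigma}$ does not follow. Second, there is no Mayer–Vietoris or torsion-theoretic argument on $W$ that produces an adjunction-type inequality; the self-intersection term comes from gauge theory. The paper's route is: Turaev's identification of Seiberg--Witten basic classes with the maximal abelian torsion (Theorem~\ref{AlexanderBasicClass}), Baldridge's formula expressing $\SW_W$ in terms of $\SW_N$ and the Euler class (Theorem~\ref{BaldrigeFormula}), and then the 4-dimensional adjunction inequality for circle bundles (Lemma~\ref{AdjunctionInequality}). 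Your torsion computations enter only to prove the graph manifold has ``enough basic classes,'' not to bound $\TEuler{(\Sigma)}$ directly.

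The removal of the Euler class restriction is also not a matter of ``tracking how $e(p)$ twists the torsion.'' The genuine difficulty is that Baldridge's formula is a sum $\sum_k \SW_N(\xi + k e)$ which can vanish by cancellation even when $\SW_N(\xi)\neq 0$. The paper's fix is geometric: modify the Euler class by $k\,\PD[\gamma]$ for a loop $\gamma$ disjoint from the surface (Lemma~\ref{EulerClone}), so that for $k\gg 0$ no cancellation occurs, then show the self-intersection and Thurston norm terms are unchanged (Lemma~\ref{DisjointLoop}). Since such a disjoint loop need not exist for a minimizing $\Sigma$, one pipes off the finitely many intersection points (Lemma~\ref{Tubing}) and then passes to $k$-fold cyclic covers, where the additive complexity cost $m$ from piping stays fixed while everything else scales by $k$; letting $k\to\infty$ recovers the sharp inequality (Lemma~\ref{NonTorsionCase}). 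This limiting-over-covers argument is the main new mechanism and is absent from your sketch.
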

The inequality is stronger than the adjunction inequalities on $W$.
It has also the tendency to be sharp. 
See \cite[Section 4]{Friedl14} for a discussion of this, which is
also relevant for graph manifolds.

For the trivial circle bundle such an estimate was proven 
by Kronheimer \cite{Kronheimer99} using a perturbation of
the Seiberg-Witten equation. Later Friedl and Vidussi \cite[Theorem 1.1]{Friedl14} considered
circle bundles over irreducible $3$-mani\-folds with virtually
RFRS fundamental group. They obtained the above estimate for all but finitely many
circle bundles over each such manifold.

A key ingredient in their proof
was that they can approximate non-zero cohomology classes
by fibred classes in covers. 
This approach does not work for graph
manifolds as they might not admit any cover which fibres, see 
\cite[Theorem E]{Neumann97} for an obstruction.
A suitable substitution for this approach is the existence of a finite cover 
in which the torsion norm of Turaev and the Thurston norm agree. 
The torsion norm of Turaev is defined
by twisted Reidemeister torsion and the first 
part of the paper concentrates on calculating the torsion norm 
for some classes of graph manifolds.

The rest of the proof follows roughly along the following lines:
One of the few ways to obtain an estimate as above is to use
the adjunction inequality. See \cite{Lawson97} for a survey on these techniques. 
To use these one needs to know the 
Seiberg-Witten basic classes on the total space of the circle bundle.
By a result of Baldridge \cite[Theorem 1]{Baldridge01},
one can describe the Seiberg-Witten
basic classes of the total space in terms of basic classes
of the base. The basic classes
on a $3$-manifold can be understood
in terms of the maximal abelian torsion 
as described in \cite[Section IX.1.2]{Turaev02}.
By using results relating the torsion norm
with the Thurston norm, we find enough basic classes that 
the adjunction inequality will lead to the estimate above.
Like Friedl and Vidussi we have to deal with possible cancellations
in Baldridge's formula. They had to exclude finitely many
circle bundles over each $3$-manifold. To check whether a given circle bundle
is excluded or not requires knowledge of the Seiberg-Witten basic classes
of the $3$-manifold in question.
We can avoid these cancellations 
by changing the Euler class away from a representing surface of minimal genus.
The necessary freedom to do so is obtained by going up to finite covers.
Therefore we generalise their theorem in two ways: We extend it to graph manifolds,
and we extend it to all circle bundles.

To follow this strategy through it is necessary 
to establish some statements on graph manifolds beforehand. 
These might be of independent interest. 
The main technical theorem, which is Theorem \ref{GraphRes}, describes
a character of the fundamental group, which is later used to twist Reidemeister torsion. 
This leads to the following in Theorem \ref{AlexanderThurston}.
\begin{thm*}
Let $N$ be a graph manifold with $b_1(N) \geq 2$. 
Then there exists a finite cover
$p\colon M \rightarrow N$ such that the torsion norm and
the Thurston norm coincide on $M$.
\end{thm*}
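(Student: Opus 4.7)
My plan is to apply the main technical result Theorem~\ref{GraphRes} to produce a finite cover $p\colon M \to N$ together with a suitable character $\alpha\colon \pi_1(M) \to \IK^\times$, and then to show that the $\alpha$-twisted Reidemeister torsion computes the Thurston norm on $M$. The general inequality $\TorN{\sigma} \leq \Thur{\sigma}$ holds on every closed irreducible $3$-manifold (see \cite[Section IX]{Turaev02}), so only the reverse inequality has to be established. Since the Turaev torsion norm dominates the norm associated to any one-dimensional twisted torsion factor, it suffices to exhibit a single character on some finite cover whose twisted Alexander norm matches the Thurston norm on all of $H_2(M;\IZ)$.

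To carry this out I would use the JSJ decomposition $M = \bigcup_v M_v$ into Seifert fibred pieces. The Thurston norm decomposes additively across incompressible tori, and the twisted Reidemeister torsion has a corresponding multiplicative behaviour, so the problem reduces to computing the $\alpha$-twisted torsion of each Seifert piece. For such a piece this torsion is explicit in terms of the Seifert invariants of $M_v$ and the values of $\alpha$ on the regular fibre and on lifts of a basis of $H_1$ of the base orbifold. For a sufficiently generic character, the degree of this torsion on $M_v$ equals $\Thur{\sigma|_{M_v}}$ for every class $\sigma$, and summing across pieces yields $\TorN{\sigma} = \Thur{\sigma}$.

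The main obstacle is to guarantee this genericity uniformly on a single finite cover: the character $\alpha$ must avoid, on every JSJ piece at once, the finitely many resonances that cause zero or pole cancellations in the Seifert torsion formula, and the cover must be chosen independently of the particular homology class. Theorem~\ref{GraphRes} is the tool designed to produce such an $\alpha$, so the remaining task in the proof of this theorem is essentially the verification that its character satisfies the non-vanishing conditions demanded by the Seifert torsion formulas and that these conditions are enough to force the equality between the torsion norm and the Thurston norm on $M$.
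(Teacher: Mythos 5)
Your outline correctly captures the strategy for the hardest case — where a finite cover of $N$ is of composite type — and there it matches the paper's route through Proposition~\ref{GraphThurstonTorsion}: use Theorem~\ref{GraphRes} to produce the character $\alpha$, compute the $\alpha$-twisted torsion block by block via the splitting along graph tori (Lemma~\ref{TorsionSplit} and Lemma~\ref{TorsionBlock}), match the result against the additive splitting of the Thurston norm (Propositions~\ref{ThurstonSplit} and~\ref{ThurstonProduct}), and close the argument with the inequality chain of Proposition~\ref{Comparison}. One small imprecision: the paper does not actually need to chase ``genericity'' of $\alpha$ over arbitrary Seifert invariants. It first passes to a cover where every block is a trivial circle bundle $\Sigma\times S^1$ (that is the content of the composite-type reduction), so the block torsion is the clean formula of Lemma~\ref{TorsionBlock}, and the only non-resonance condition needed is $\alpha(\theta_B)\neq 1$ for each Seifert fibre class $\theta_B$ — which is exactly the normalisation built into Theorem~\ref{GraphRes}. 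Your suggestion of computing with arbitrary Seifert torsion formulas and choosing a sufficiently generic character is workable in principle but is more machinery than the paper employs.

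There is, however, a genuine gap. Theorem~\ref{GraphRes} applies only to graph manifolds of composite type, but a graph manifold with $b_1\geq 2$ need not be virtually of composite type. Lemma~\ref{Lem:GraphTypes} gives a trichotomy (after passing to a cover) among circle type, torus type, and composite type (spherical type is excluded by $b_1\geq 2$), and your argument silently assumes the composite case. For the circle type (a non-trivial circle bundle over a surface) and the torus type (a torus bundle), Theorem~\ref{GraphRes} does not apply — there are no graph tori and no character to construct — so your strategy produces nothing. The paper handles these two cases separately and more cheaply: it shows that the Thurston norm vanishes identically (citing \cite[Example 7.3]{McMullen02} for circle bundles, and a Mayer--Vietoris/wang-sequence argument plus horizontal tori for torus bundles), and then the inequality $\TorN{\sigma}\leq\Thur{\sigma}$ forces $\TorN{\sigma}=0$ as well. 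You need to add this case analysis, or at least the observation that it suffices to treat the virtually-composite case because the other types are degenerate.
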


\subsection{Acknowledgements}
I would like to thank my advisor Stefan Friedl.
His insights and guidance were key for writing this article.
Furthermore, I would like to thank
Kilian Barth, Dominic Jänichen, Brendan Owens,
Mark Powell, Martin Ruderer, Sa\v so Strle,
Michael Völkl and Raphael Zentner
for helpful discussions.

I am grateful to the referee for providing many valuable suggestions and corrections.

\subsection{Conventions}
We exclusively work in the smooth setting.
By a surface we refer to an oriented compact $2$-dimensional manifold.
All $3$-manifolds we consider are irreducible, oriented and compact.
Circle bundles over $3$-manifolds are always understood to have (and admit)
a fibre-orientation.
For a natural number $k \geq 1$ we abbreviate the quotient ring 
$\IZ/k \IZ$ to $\ZkZ k$.
\section{Graph manifolds}
\subsection{Seifert fibred and graph manifolds}
In this section we recall the definition of graph manifolds and
explain how they can be simplified by taking finite covers.

Given a collection of disjointly embedded surfaces $\{\Sigma_i\}$ in a $3$-manifold $N$,
we can find disjoint bicollars, which are tubular neighbourhoods $\Sigma_i \times (-1,1) \subset N$ of $\Sigma_i$.
We denote by $N | \{\Sigma_i\} $ the manifold with boundary obtained by substituting 
each bicollar $\Sigma_i \times (-1,1)$ with $\Sigma_i \times (-1,0] \cup [0, 1)$.
\begin{defn}
	For a $3$-manifold $N$ a
	\emph{graph structure} is a collection $\mathcal{T}_N$ of
	disjointly embedded incompressible tori and a Seifert fibred structure 
	on $N|\mathcal{T}_N$.
	A \emph{graph manifold} is a connected and closed $3$-manifold 
	which admits a graph structure.
\end{defn}
We introduce some more terminology to help us deal with these manifolds. 
The elements of $\mathcal{T}_N$ are called \emph{graph tori}. 
The components of $N | \mathcal{T}_N$ are called \emph{blocks} and we denote 
the collection of blocks by $\mathcal{B}_N$.
Recall that by convention the graph tori are oriented.
If $B$ is a block of $N$ and $T$ is a boundary component of $B$, then the torus $T$
also inherits an additional orientation, namely the boundary orientation induced by $B$. 
There is a unique block $B_+(T)$ of $N$
such that these two orientations agree. The block inducing the opposite orientation
on $T$ is referred to as $B_-(T)$. The block $B_+(T)$ is \emph{behind} $T$ and
the block $B_-(T)$ is \emph{in front} of $T$. 
We will refer to the corresponding inclusions by
$\Map {j_\pm} T {B_\pm(T)}$.
It can happen that a torus $T \in \mathcal{T}_N$ bounds
the same block $B = B_\pm(T)$ on both sides. In this case we say $B$ is \emph{self-pasted} along $T$.

If the Seifert fibre structure on $N | \mathcal{T}_N$ admits a fibre-orientation
and we make the corresponding choice, then the Seifert fibres of $B_\pm(T)$ give 
rise to oriented embedded curves $\gamma_\pm \subset T$. We denote their intersection
number by $c(T):=[\gamma_+] \cdot_T [\gamma_-]$. For $T \in \mathcal{T}_N$ the number $c(T)$  
is called the \emph{fibre-intersection number}. Note that $\pm c(T)$ is well-defined
even if $N | \mathcal{T}_N$ does not admit a fibre-orientation.

\begin{defn}
A graph structure for $N$ is \emph{reduced} if all fibre-intersection numbers are
non-zero.
\end{defn}
\begin{rem}
If the fibre-intersection number of a graph torus $T$ vanishes, then
also $B_+(T) \cup_T B_-(T)$ (or $B_+(T) \cup_T$, if $B_\pm(T)$ coincide) admits a Seifert fibred structure. 
Thus for every graph structure $\mathcal{T}_N$ of $N$ there is a subcollection $\mathcal{T}_N' \subset \mathcal{T}_N$
which is the collection of graph tori of a reduced graph structure on $N$.
\end{rem}

Now we focus on how a graph manifold can be simplified by considering finite covers of it.
We can ask the following question: When can we glue a finite cover $M' \rightarrow N | \mathcal{T}_N$ 
together to a cover $M \rightarrow N$. The following property gives a sufficient condition.
\begin{defn}
Let $d \geq 1$ be a natural number.
\begin{enumerate}
	\item A finite connected cover $p\colon \widetilde T \rightarrow T$ of the two dimensional torus 
	is \emph{$d$-characteristic} if the cover is induced by the following subgroup
	\begin{align*}
		\{ g^d : g \in \pi_1(T) \} \subset \pi_1(T).
	\end{align*}
	\item A finite cover $p\colon M \rightarrow N$ of a $3$-manifold $N$ with (possibly empty) toroidal boundary  
	is \emph{$d$-characteristic} if the restriction of the cover to each boundary component
	is a $d$-characteristic cover of the torus.
\end{enumerate}
\end{defn}
\begin{lem}\label{GlueCovers}
	Let $N$ be a $3$-manifold equipped with a graph structure $\mathcal{T}_N$. 
	Let $d \geq 1$ be a natural number and $\Map {p'} {M'} {N|\mathcal{T}_N}$ be
	a $d$-characteristic cover.
	Then there is a cover $\Map p M N$ and 
	a trivial cover $\Map \phi {M|\mathcal{T}_M} {M'}$ making 
	the diagram
	\begin{center}
	\begin{tikzpicture}
		\matrix (m) [matrix of math nodes, row sep=3em, column sep=4em
					, text height=1.5ex, text depth=0.25ex]
  		{
		     M|\mathcal{T}_M && M' \\
		     &N|\mathcal{T}_N& \\
		};
		\path[->] (m-1-1) edge node [below left] {$p$} (m-2-2);
		\path[->] (m-1-1) edge node [above] {$\phi$} (m-1-3);
		\path[->] (m-1-3) edge node [below right] {$p'$} (m-2-2);
	\end{tikzpicture}
	\end{center}
	commutative, where $\mathcal{T}_M$ is the collection
	\begin{align*}
		\mathcal{T}_M := \{ C : C \text{ component of } p^{-1}(T), T \in \mathcal{T}_N\}.
	\end{align*}
\end{lem}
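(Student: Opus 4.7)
The key observation is that the subgroup $\{g^d : g \in \pi_1(T)\}$ equals $d\cdot\pi_1(T) \subset \pi_1(T) \cong \IZ^2$, which is a characteristic subgroup. Consequently the $d$-characteristic cover $\widetilde T \to T$ is determined by $T$ alone, and every self-homeomorphism of $T$---in particular the identity---lifts to a self-homeomorphism of $\widetilde T$; the set of lifts is a torsor over the deck group $(\IZ/d)^2$.

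My plan is to reconstruct $M$ by gluing $M'$, possibly after first passing to a trivial cover, along chosen lifts of the identifications that assemble $N$ from $N|\mathcal{T}_N$. Recall that $N$ is recovered by identifying, for each $T\in\mathcal{T}_N$, the two boundary tori $T_+\subset\partial B_+(T)$ and $T_-\subset\partial B_-(T)$ via the natural identity on $T$. By the $d$-characteristic hypothesis every component of $p'^{-1}(T_\pm)$ is a copy of $\widetilde T$, so any bijection between the components of $p'^{-1}(T_+)$ and those of $p'^{-1}(T_-)$, together with a chosen lift of the identity on each matched pair, supplies valid gluing data for the pair of tori. Such a bijection exists once the two boundary-component counts over $T$ coincide; when this does not hold already, I arrange it by first replacing $M'$ by a trivial cover $\widetilde{M'}\to M'$ consisting of $n_B$ copies of $(p')^{-1}(B)$ for each block $B$, with the $n_B$ chosen so that the counts over $T_+$ and $T_-$ match for every $T\in\mathcal{T}_N$. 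Because $N$ is connected, these balance equations propagate consistently across the graph of blocks and admit a solution.

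Carrying out the gluings produces a $3$-manifold $M$. The composition $p'\circ\phi\colon \widetilde{M'}\to N|\mathcal{T}_N$ is compatible with the identifications---each gluing lift covers the identity on some $T$---so it descends to a continuous map $p\colon M\to N$ that is a local homeomorphism across every glued torus, and therefore a finite cover. By construction $\mathcal{T}_M$ equals the claimed collection $\{C : C$ a component of $p^{-1}(T),\ T\in\mathcal{T}_N\}$, and cutting $M$ along $\mathcal{T}_M$ simply undoes the gluings to exhibit $M|\mathcal{T}_M=\widetilde{M'}$, with $\phi$ the required trivial cover. The principal bookkeeping obstacle is self-pasted blocks, where $B_+(T)=B_-(T)$ and both boundary tori over $T$ lie in the same component of $M'$; the matching of boundary components must then be made internally on that single component, but once the count condition is satisfied this is combinatorially unobstructed. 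The $(\IZ/d)^2$ freedom in each lift is harmless: any fixed choice yields a valid cover.
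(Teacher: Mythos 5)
Your proof is correct and takes precisely the route the paper sketches: the paper suppresses the argument with the remark that one can ``lift the gluing maps'' because the cover is characteristic, and that is exactly the content of your key observation that $d\cdot\pi_1(T)$ is a characteristic subgroup of $\pi_1(T)\cong\IZ^2$, so the identity on $T$ lifts to an isomorphism between any two copies of the $d$-characteristic cover $\widetilde T$.

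You do supply one detail worth noting beyond the paper's sketch: since $p'\colon M' \to N|\mathcal{T}_N$ is a cover of a \emph{disconnected} space, the degrees over distinct blocks can differ, and without first passing to $\widetilde{M'}$ (taking $n_B = \operatorname{lcm}_X e_X / e_B$ copies of $(p')^{-1}(B)$, where $e_B$ is the degree over block $B$) one cannot match up components over $T_+$ and $T_-$. This is the right fix, but note that it makes $\phi\colon M|\mathcal{T}_M \to \widetilde{M'} \to M'$ a trivial cover only in the sense that each component of the source maps homeomorphically to a component of $M'$ (no monodromy), not a disjoint union of a fixed number of copies of $M'$; that weaker sense is what the applications in the paper actually use, since they only extract the diffeomorphism type of the resulting blocks. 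Two minor imprecisions: the balance condition $n_{B_+}e_{B_+} = n_{B_-}e_{B_-}$ is solvable simply by the lcm choice and does not need connectedness of $N$ to ``propagate''; and for a self-pasted block the two boundary tori over $T$ need not lie in the same component of $M'$, though this does not affect the argument. Neither affects correctness.
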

We suppress a proof. Its idea is that due to the cover being characteristic,
one can lift the gluing maps. 
See \cite[Theorem 2.2 and Section 4]{Hempel87} for a related discussion.
With the next lemma one can eliminate the exceptional 
Seifert fibres of a Seifert fibred manifold
by taking a suitable characteristic cover. We will use it 
to simplify the blocks of our graph manifold.
\begin{lem}[Hempel]\label{SeifertRes}
	Let $N$ be a Seifert manifold with (possibly empty) toroidal boundary. Then
	there is a $d$-characteristic cover $p\colon M \rightarrow N$ for some $d \in \IN$
	such that $M$ is diffeomorphic to a circle bundle over a surface.
\end{lem}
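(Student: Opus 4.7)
\emph{Plan.} The guiding picture is that a Seifert manifold $N$ with toroidal boundary is already a (singular) circle bundle over its base $2$-orbifold $B$, and the only obstruction to $N$ being a circle bundle over a \emph{surface} is the presence of cone points on $B$. The plan is therefore to pass to a finite cover in which every cone point is resolved, while at the same time arranging the $d$-characteristic condition on every boundary torus.

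Let $\alpha_1, \ldots, \alpha_k$ be the orders of the cone points of $B$, corresponding to the exceptional fibres of $N$, and let $d$ be any positive multiple of $\lcm(\alpha_1, \ldots, \alpha_k)$; I keep the freedom to enlarge $d$ later. The first step is to construct a finite cover $q\colon \tilde B \to B$ with $\tilde B$ a surface and $q$ of degree exactly $d$ on each boundary circle of $B$. Such a $q$ is produced by building a surjection $\pi_1^{\mathrm{orb}}(B) \to G$ to a finite group in which each cone-point generator $x_i$ has image of order exactly $\alpha_i$ (so that the cone point is unfolded) and each boundary-circle generator has image of order $d$. Pulling the Seifert fibration back along $q$ yields a circle bundle $\tilde N \to \tilde B$ together with a finite covering $\tilde N \to N$ whose total space is a circle bundle over a surface.

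The second step arranges the $d$-characteristic condition on the boundary. Each boundary torus $T$ of $N$ fibres over a boundary circle $C$ of $B$, so $\pi_1(T) \cong \IZ \oplus \IZ$ splits into a base direction and a fibre direction. By construction, $\tilde N \to N$ already covers the base direction with degree $d$; the fibre direction, however, may not be covered with degree $d$. This is remedied by taking an additional $d$-fold fibrewise cover of the circle bundle $\tilde N$, which exists because $\tilde N \to \tilde B$ is a genuine circle bundle. The resulting manifold $M$ is still a circle bundle over a surface, and on each boundary torus of $N$ the induced cover is exactly the connected $d \times d$ cover, i.e.\ is $d$-characteristic.

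The main obstacle is the first step: producing a single cover of the $2$-orbifold $B$ that is simultaneously a manifold cover (resolving every cone point) and has prescribed degree $d$ on every boundary circle. The needed flexibility comes from the freedom to enlarge $d$ by an additional factor and to choose the finite target group $G$ large enough to accommodate both sets of constraints at once; once the base cover exists, pulling back and adding the fibrewise cover is routine.
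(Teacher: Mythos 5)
Your approach is genuinely different from the paper's. The paper's own proof is a case analysis resolved almost entirely by citation: for closed $N$ it invokes geometrisation via the tables of Aschenbrenner--Friedl--Wilton, for compressible boundary it argues directly that $N$ must be a solid torus (hence already a circle bundle over the disc), and for incompressible boundary it refers to a lemma of Friedl. Your proposal instead tries to build the cover from scratch: find a finite manifold cover of the base $2$-orbifold $B$, pull the Seifert fibration back to an honest circle bundle, and then take a fibrewise $d$-fold cover to arrange the $d$-characteristic property on the boundary. The last step is indeed routine, and the overall strategy is the ``right'' one for most bases; the gap is concentrated in the first step.

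Two concrete problems arise. First, you implicitly assume the base orbifold $B$ is good. A closed Seifert manifold can fibre over a teardrop $S^2(p)$ or a spindle $S^2(p,q)$ with $p \neq q$; these are bad orbifolds with no manifold cover whatsoever, so the first step cannot begin. Under the paper's standing irreducibility convention these $N$ are $S^3$ and lens spaces, which are all covered by $S^3$ (itself a circle bundle) and the boundary condition is vacuous, so the case is easy --- but it must be singled out and does not follow from your construction. Second, the existence of the finite quotient $G$ with all the prescribed orders is asserted rather than proved, and the claimed ``freedom to enlarge $d$'' is not always available: for $B = D^2(\alpha)$ one has $\pi_1^{\mathrm{orb}}(B)\cong\IZ/\alpha$ with the boundary generator equal to the cone-point generator up to sign, so the only manifold cover is the universal $\alpha$-fold cover and the boundary degree is forced to equal $\alpha$. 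More generally, when $B$ has several boundary circles the long relation in $\pi_1^{\mathrm{orb}}(B)$ expresses one boundary generator as the inverse of a product of the remaining boundary generators and all cone-point generators, so making every boundary generator have image of the same order $d$ in $G$ while each $x_i$ retains full image order $\alpha_i$ is a genuine constraint requiring an actual construction (for example a regular cover resolving the cone points followed by a degree-normalising cover as in Lemma~\ref{SurfaceBoundaryCover}, with care about the interaction of the two). The paper's short proof is short precisely because these points are delegated to the cited results.
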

\begin{proof}
If $N$ has no boundary components, this can be deduced as follows.
Note that the conclusion holds for the exceptions stated in \cite[Theorem 1.17]{Aschenbrenner12}.
Thus we may assume that $N$ is a geometric $3$-manifold and not of
Sol- and hyperbolic geometry. Again the conclusion holds by \cite[Table 1]{Aschenbrenner12}.

If $N$ has a compressible boundary component, then it has to be prime
as the only non-prime Seifert fibred manifold is the connected sum $\IR P^3 \# \IR P^3$.
As $N$ is not a 2-sphere bundle over $S^1$, it is even irreducible.
A compressing disc gives rise to a embedded $2$-sphere which lies in a neighbourhood of the
disc and the boundary. This sphere has to bound
a ball and thus $N$ is diffeomorphic to a solid torus. 

If $N$ has incompressible boundary, then the statement can be found in
\cite[Lemma 4.20]{FriedlResp}.
\end{proof}

To simplify a graph manifold we will need to find covers of the
blocks. The next lemma facilitates this.
\begin{lem}\label{SurfaceBoundaryCover}
	Let $\Sigma$ be a connected surface and not the disc. 
	Let $d \geq 2$ be a natural number.
	Then there is a finite connected cover $p\colon \widetilde \Sigma \rightarrow \Sigma$ 
	which restricts to each boundary component of $\widetilde \Sigma$ to the 
	$d$-fold connected cover of a circle.
\end{lem}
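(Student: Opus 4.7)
The plan is to realise the desired cover as the one associated to a surjection $\phi\colon \pi_1(\Sigma) \to G$ onto a finite group $G$ for which each boundary loop $c_i$ of $\Sigma$ has exact order $d$ in $G$. Given such a $\phi$, the connected regular cover $\widetilde\Sigma \to \Sigma$ associated to $\ker \phi$ has the property that every component of the preimage of $c_i$ covers $c_i$ with degree $|\langle \phi(c_i) \rangle| = d$, which is exactly what the lemma demands. If $\partial\Sigma = \emptyset$ the conclusion is vacuous and the identity works, so I may assume $\Sigma$ has $n \geq 1$ boundary circles; then $\Sigma$ is homotopy equivalent to a graph, $\pi_1(\Sigma)$ is free, and one has the standard presentation
\begin{align*}
\pi_1(\Sigma) = \langle a_1,b_1,\ldots,a_g,b_g,c_1,\ldots,c_n \mid [a_1,b_1]\cdots[a_g,b_g]\,c_1\cdots c_n = 1\rangle
\end{align*}
with $c_i$ representing the $i$-th boundary component.

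If $n \geq 2$ I would take $G = (\IZ/d)^{n-1}$ and define $\phi$ by sending $a_j, b_j \mapsto 0$, $c_i \mapsto e_i$ for $i < n$, and $c_n \mapsto -(e_1+\cdots+e_{n-1})$. Then the relation is killed in the abelian target, $\phi$ is surjective, and each $c_i$ visibly maps to an element of order exactly $d$.

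The remaining case $n=1$ is the real obstacle: then $\Sigma$ not being a disc forces $g \geq 1$, and the boundary loop $c = \prod_j [a_j, b_j]$ lies in the commutator subgroup of $\pi_1(\Sigma)$, so no abelian quotient can detect it. I would resolve this by mapping to a nilpotent target such as the Heisenberg group
\begin{align*}
G = \langle x, y, z \mid x^d=y^d=z^d=1,\ [x,y]=z,\ [x,z]=[y,z]=1\rangle
\end{align*}
of order $d^3$, via $a_1 \mapsto x$, $b_1 \mapsto y$ and $a_j, b_j \mapsto 1$ for $j \geq 2$; then $\phi(c) = [x,y] = z$ has order $d$, and $\phi$ surjects onto $G$ since $x, y$ generate it. As a backup, one could instead first pass to any connected double cover $\Sigma' \to \Sigma$ (available because $H^1(\Sigma;\IZ/2) \neq 0$ when $g \geq 1$); since $[c] = 0$ in $H_1(\Sigma;\IZ/2)$ the boundary of $\Sigma'$ splits into two circles each projecting to $c$ with degree one, and composing with the $n \geq 2$ construction applied to $\Sigma'$ yields the desired cover of $\Sigma$.
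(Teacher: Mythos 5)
Your proof is correct, and it differs from the paper's in a noteworthy way in the one-boundary-component case. For $n \geq 2$ your abelian quotient $(\IZ/d)^{n-1}$ is essentially the same idea as the paper's, which uses the full Hurewicz map $\pi_1(\Sigma) \to H_1(\Sigma;\IZ/d)$ — both exploit that each boundary class has exact order $d$ in a suitable abelian quotient (yours is simply a smaller, hand-picked quotient). For $n = 1$, where the boundary loop lies in the commutator subgroup and is therefore invisible to any abelian quotient, the paper takes the same Hurewicz kernel anyway: this cover is trivial on the boundary, but since the degree is $d^{2g} > 1$ it multiplies the number of boundary circles, after which the $n \geq 2$ step applies. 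Your primary route instead maps onto the mod-$d$ Heisenberg group, so that $\phi(c) = [x,y] = z$ has exact order $d$, achieving the result in a single step with a nilpotent (non-abelian) quotient; this is a genuinely different and arguably cleaner mechanism for detecting the boundary commutator directly. Your "backup" is precisely the paper's two-step iteration. One small remark: the paper's insistence on the Hurewicz kernel has the side benefit that the covers it produces are canonical (characteristic) subgroups, which fits the surrounding machinery about $d$-characteristic covers; your Heisenberg cover is regular but not canonical, though for this lemma — which only asks for the boundary restriction property that feeds into the "associated $d$-characteristic cover" construction — that makes no difference.
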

\begin{proof}
	If $\Sigma$ has only one boundary component, then 
	the kernel of the Hurewicz homomorphism 
	$\pi_1(\Sigma) \rightarrow H_1(\Sigma; \ZkZ d)$ induces
	a $1$-characteristic cover such that the covering surface
	has more than one boundary component. Here we use that
	$\Sigma$ is not a disc and so not all of $\pi_1(\Sigma)$ is
	in the kernel. Now we can apply the case
	below.

	If $\Sigma$ has more than one boundary component, then the 
	kernel of the same homomorphism 
	$\pi_1(\Sigma) \rightarrow H_1(\Sigma; \ZkZ d)$ induces
	a cover with the required properties.
\end{proof}
Let $\Map p {\widetilde \Sigma} {\Sigma}$ be a finite cover which restricts 
on each boundary component of $\widetilde \Sigma$ to the $d$-fold cover of
the circle.
We refer to the cover $\Map {p \times z^d}  {\widetilde \Sigma \times S^1} {\Sigma \times S^1}$
as the \emph{associated $d$-characteristic cover}.
\begin{defn}\label{Defn:GraphTypes}
	For a graph manifold $N$ we distinguish the following types:
\begin{center}
\begin{tabularx}{\textwidth}{r X}
	Spherical type & $N$ is diffeomorphic to $S^3$.\\
	Circle type & $N$ is a non-trivial circle bundle over a surface of 
	non-positive Euler characteristic.\\
	Torus type & $N$ is a torus bundle.\\
	Composite type & $N$ admits a reduced graph structure such that 
	each block is diffeomorphic to $\Sigma \times S^1$
	with $\Sigma$ being a connected surface of negative Euler characteristic.
\end{tabularx}
\end{center}
\end{defn}
The underlying surface of a block $B \cong \Sigma \times S^1$, the space obtained by identifying
points in the same Seifert fibre, is the \emph{block surface} of $B$. If we have
fixed a trivialisation $B = \Sigma \times S^1$, we also refer to $\Sigma$ as the block
surface.
We say that $N$ is \emph{virtually} of one of the above types if there is a finite
cover $p \colon M \rightarrow N$ such that $M$ is of the type in question.
Variants of the proposition below are well-known, see \cite{Neumann97}. 
A proof is included for the convenience of the reader.
\begin{lem}\label{Lem:GraphTypes}
Let $N$ be a graph manifold. Then $N$ is virtually 
of one of the above types.
\end{lem}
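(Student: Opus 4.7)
The plan is to split the argument by whether $N$ admits a global Seifert fibration, and in each case assemble a finite cover of the required type. The two main tools are Hempel's Lemma~\ref{SeifertRes}, which kills exceptional fibres block-by-block, and the gluing Lemma~\ref{GlueCovers}, which reassembles characteristic block covers into a cover of $N$.

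First, suppose $N$ is Seifert fibered. Lemma~\ref{SeifertRes} produces a finite cover $M\to N$ that is an oriented circle bundle over a closed surface $\Sigma$, and I read off the type from $\chi(\Sigma)$. When $\chi(\Sigma)>0$, $\Sigma=S^2$ and $M$ is a lens space, finitely covered by $S^3$, giving spherical type. When $\chi(\Sigma)=0$, $\Sigma=T^2$ and $M$ is a circle bundle over $T^2$, which is itself a torus bundle over $S^1$, giving torus type. When $\chi(\Sigma)<0$ and the bundle is non-trivial, $M$ is of circle type; when $\chi(\Sigma)<0$ and the bundle is trivial, $M=\Sigma\times S^1$ is a single $\Sigma\times S^1$-block with empty graph-torus collection, hence composite type.

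Now suppose $N$ is not Seifert fibered and fix a reduced graph structure $\mathcal{T}_N$, which is then non-empty, so every block $B\in\mathcal{B}_N$ has non-empty toroidal boundary. For each block $B$ I apply Lemma~\ref{SeifertRes} to obtain a $d_B$-characteristic cover $M_B\to B$ that is an oriented circle bundle over a surface $\Sigma_B$ with boundary. Since $H^2(\Sigma_B;\IZ)=0$ for surfaces with non-empty boundary, this bundle is trivial and $M_B\cong\Sigma_B\times S^1$. Setting $D:=\lcm_B(d_B)$ and applying Lemma~\ref{SurfaceBoundaryCover} to each $\Sigma_B$ with parameter $D/d_B$ (possibly iterated), I produce an associated characteristic cover $\widetilde\Sigma_B\times S^1\to\Sigma_B\times S^1$ whose composition with $M_B\to B$ is $D$-characteristic. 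The disjoint union of these block covers is then a $D$-characteristic cover of $N|\mathcal{T}_N$, and Lemma~\ref{GlueCovers} glues it to a finite cover $M\to N$ whose induced reduced graph structure $\mathcal{T}_M$ has $\widetilde\Sigma_B\times S^1$-blocks. Provided $\chi(\widetilde\Sigma_B)<0$ for every block, $M$ is of composite type.

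The main obstacle I foresee is handling degenerate block surfaces with $\chi\geq 0$, namely discs and annuli. Covers of a disc are discs, and covers of an annulus are annuli, so Lemma~\ref{SurfaceBoundaryCover} cannot force negative Euler characteristic in these cases. These need a separate case-analysis: a disc block corresponds to a solid-torus piece that, in a reduced graph structure, must be absorbed into its (unique) neighbouring block after choosing a suitable cover; an annulus block corresponds to a $T^2\times I$-piece, and if $N$ decomposes entirely into such pieces it is itself a torus bundle, yielding torus type. Arranging these residual cases cleanly within the framework of $D$-characteristic covers is the technical crux of the argument.
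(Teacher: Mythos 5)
Your overall plan coincides with the paper's: split on whether $N$ admits a global Seifert fibration, handle the closed case via Lemma~\ref{SeifertRes} directly, and in the graph case cover each block, take the lcm of the covering degrees, apply Lemma~\ref{SurfaceBoundaryCover}, and reassemble with Lemma~\ref{GlueCovers}. The first (Seifert fibred) case you carry out completely and correctly.

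However, in the second case you have a genuine gap and a misstep. The misstep is the disc case: a block with disc block surface is a solid torus, whose boundary torus is compressible, contradicting the hypothesis that the tori in a graph structure are incompressible. So disc blocks simply do not occur; there is nothing to ``absorb into a neighbouring block'', and that proposed mechanism would not work in any case. The genuine gap is the annulus case, which you correctly flag as the crux but do not resolve. The paper resolves it concretely: for an annular block $X$ whose two boundary tori are not self-pasted, delete one boundary torus from $\mathcal{T}_M$ and extend the Seifert fibred structure of the adjacent block across $X$ using the product structure $X \cong T^2\times I$; iterate. If this process terminates with a single self-pasted annular block, $M$ is a torus bundle and one concludes with torus type. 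After removing the annular blocks the paper also re-reduces the graph structure (the merging step can create tori with vanishing fibre-intersection number) and, if necessary, passes to the fibre-orientation double cover before concluding composite type. Your version starts from a reduced structure, which is legitimate, but you still must argue that reduced-ness and the absence of annular blocks are preserved after taking the block covers and gluing, and you must treat the self-pasted annulus/torus-bundle endgame explicitly; none of this is yet in the proposal.
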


\begin{proof}
We consider two cases. In the first case
$N$ admits a Seifert fibred structure and in the second 
$N$ admits a graph structure with at least one graph torus. Note that
these two cases are not disjoint.
\begin{description}
\item[Seifert fibred] The $3$-manifold $N$ admits a Seifert fibred structure.
	By Lemma \ref{SeifertRes} the manifold $N$ has a finite cover $\Map \pi E N$
	with $E$ being the total space of a circle bundle $E\rightarrow \Sigma$ over a surface 
	$\Sigma$ without boundary.

	If this circle bundle is trivial, then $E$ will be either of composite type or of torus type depending on
	the genus of $\Sigma$. As we are only considering irreducible graph manifolds, 
	the case $E \cong S^2 \times S^1$ does not appear.

	If the circle bundle is non-trivial, then either $\Sigma$ is a sphere or
	has non-positive genus. If $\Sigma$ is a sphere, then $E$ is covered by $S^3$.
	In the other case $E$ is of circle type.

\item[Graph torus] Fix a graph structure on $N$. 
	Denote the collection of tori by $\mathcal{T}_N$ and the
	collection of blocks by $\mathcal{B}_N$. By assumption $\mathcal{T}_N$ is non-empty.
	By Proposition \ref{SeifertRes} we know that each block $X \in \mathcal{B}_N$ has 
	for some $d_X \in \IN$ a finite 
	$d_X$-characteristic cover $p_X \colon \Sigma_X \times S^1 \rightarrow X$, 
	where $\Sigma_X$ is a connected surface.
	As the tori in $\mathcal{T}_N$ are incompressible, we know that $\Sigma_X$ cannot be the disc.
	Denote the least common multiple by $d := \lcm_{X \in \mathcal{B}} d_X$.
	We pick a $d/d_X$-characteristic cover 
	$\Map {q_X} {\widetilde \Sigma_X \times S^1} { \Sigma_X \times S^1}$
	by taking the associated characteristic cover of a suitable cover
	of $\Sigma_X$ given by Lemma \ref{SurfaceBoundaryCover}.
	Composing these covers we obtain
	a $d$-characteristic cover 
	\begin{align*}
	p'_X := p_X \circ q_X \colon \widetilde \Sigma_X \times S^1 \rightarrow X
	\end{align*}
	for each block $X \in \mathcal{B}$. We combine these to a $d$-characteristic cover
	$\Map {p'} {\bigcup \widetilde \Sigma_X \times S^1}{N|\mathcal{T}_N}$.
	By Lemma \ref{GlueCovers} 
	one can glue the components together and obtain a cover $p\colon M \rightarrow N$. 
	By the same lemma $M | \mathcal{T}_M \cong \bigcup \widetilde \Sigma_X \times S^1$
	and we see that $M$ admits a graph structure with each block being diffeomorphic
	to a copy of $\widetilde \Sigma_X \times S^1$.
	
	Now we will remove the block $X$ whose block surface is an annulus.
	If $\widetilde \Sigma_X$ is an annulus corresponding to a block $X \in \mathcal{B}_M$ and the two boundary
	tori of $X$ are not self-pasted, then we remove one of these two tori and extend the Seifert fibred
	structure from the removed torus to all of $X$. If by repeating this procedure we end up with a
	block whose block surface is an annulus and its boundary tori are self-pasted, then this
	has to be the only block and $M$ is a torus bundle. If not, all blocks leftover have a block
	surfaces of negative Euler characteristic.

	We continue by reducing the graph structure.
	First we remove all graph tori with vanishing 
	fibre-intersection number, obtaining the collection $\mathcal{T}_M'$. 
	We extend the Seifert fibred structure from $M|\mathcal{T}_M$ to
	$M|\mathcal{T}_M'$. This is possible, because we only removed tori $T$ with
	$c(T) = 0$. Thus if $\mathcal{T}_M'$ is the empty set, then $M$ is a Seifert fibred space.
	If $M | \mathcal{T}_M'$ admits a fibre-orientation, then $M$ will be of composite type 
	witnessed by this graph structure.
	If there is no fibre-orientation, then the fibre-orientation cover of $M | \mathcal{T}_M'$
	can be glued to a cover of $M$ as it is $2$-characteristic.
	By inspecting the induced graph structure on this cover we see that it is
	of composite type.
\end{description}
\end{proof}
In the table below we relate the above types with the Thurston geometries. 
See Scott's survey \cite{Scott83} on the geometrisation of $3$-manifolds for an introduction. 
A black dot in the table denotes that there is a manifold with the geometry given by the column 
which is of the type given by the row.
Note that most manifolds of composite type do not admit a geometry.
\begin{center}
\begin{tabular}{l | c c c c c c}
		& Spherical 	& Euclidean 	& Sol 		& Nil 		& $\mathbb{H}^2 \times \IR$ 	
		& $\widetilde {\operatorname{PSL}}(2,\IR)$\\
\hline
Spherical	& $\bullet$ 	&		&		&		&\\
Circle		&		&		& 		& $\bullet$	& 
		& $\bullet$\\
Torus 		& 		& $\bullet$	& $\bullet$	& $\bullet$\\
Composite 	&		&		&		&		& $\bullet$\\
\end{tabular}
\end{center}

\subsection{A character on graph manifolds of composite type}
In this section $N$ denotes a graph manifold of composite type
with a composite graph structure, see the definition below.
\begin{defn}
A \emph{composite graph structure} for $N$ is a reduced
graph structure $\mathcal{T}_N$ with a Seifert fibre orientation on
$N | \mathcal{T}_N$ such that each 
component of $N | \mathcal{T}_N$ is Seifert fibre preserving diffeomorphic 
to $\Sigma \times S^1$ with $\Sigma$ being a connected surface of negative Euler characteristic.
\end{defn}

Note that we are still allowing self-pastings. We can get rid of the them 
by the following lemma.
\begin{lem}\label{Selfpastings}
There is a finite cover of $N$ such that the induced composite graph structure
has no self-pastings.
\end{lem}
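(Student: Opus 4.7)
The strategy is to reduce the problem to combinatorics of the dual graph. Associated to the composite graph structure $\mathcal{T}_N$ is a graph $G$ whose vertices are the blocks of $N$ and whose edges are the graph tori; the edge coming from a torus $T$ joins the vertices $B_+(T)$ and $B_-(T)$. A self-pasting in $(N,\mathcal{T}_N)$ corresponds precisely to a self-loop in $G$. Collapsing each block to a point and each graph torus to an edge realises $N$ as a graph of spaces over $G$, and in particular yields a natural surjection $\phi\colon \pi_1(N) \to \pi_1(G)$. The plan is to construct a finite cover of $G$ without self-loops and pull it back along $\phi$.

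The group $\pi_1(G)$ is free, hence residually finite. After fixing a spanning tree of $G$, every self-loop $e$ of $G$ determines a free generator $g_e \in \pi_1(G)$, and there are only finitely many such. I would choose a finite quotient $\pi_1(G) \twoheadrightarrow Q$ in which each $g_e$ has non-trivial image, and take the corresponding normal cover $\widetilde G \to G$. Under this choice no self-loop of $G$ lifts to a self-loop of $\widetilde G$: an edge $e$ with both endpoints at $v$ and non-trivial monodromy $q_e \in Q$ lifts to edges running between the distinct vertices $(v, q)$ and $(v, q\cdot q_e)$ of $\widetilde G$, and edges of $G$ with distinct endpoints trivially lift to non-loops.

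Pulling back along $\phi$ produces a finite-index subgroup of $\pi_1(N)$ and hence a finite cover $p\colon M \to N$. Because $\phi$ is trivial on the fundamental group of every block and of every graph torus of $N$, the restriction of $p$ to each such piece is a trivial cover. Consequently $M$ decomposes as a disjoint union of copies of the blocks and graph tori of $N$, glued together according to $\widetilde G$. The induced graph structure $\mathcal{T}_M$ inherits the defining properties of a composite graph structure, namely reducedness, the Seifert fibre orientation, and the model $\Sigma \times S^1$ with $\chi(\Sigma) < 0$ for each block. Its dual graph is $\widetilde G$, so $M$ has no self-pastings.

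The main obstacle is the bookkeeping that the induced structure on $M$ really is composite and has the claimed dual graph. This hinges on the triviality of $\phi$ on each block and each graph torus, which in turn is the reason for pulling back a cover of $G$ rather than working with an arbitrary finite-index subgroup of $\pi_1(N)$.
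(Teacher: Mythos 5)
Your argument is correct and takes a more systematic route than the paper. Writing $T_1,\dots,T_k$ for the self-pasting tori, the paper's proof is a one-liner: it takes the double cover corresponding to the kernel of $\gamma \mapsto \sum_i \gamma \cdot [T_i] \in \ZkZ{2}$ and leaves implicit the check that this single cover kills every self-pasting at once. You instead pass to the dual graph $G$, take a self-loop-free finite cover of $G$ using residual finiteness of the free group, and pull it back along $\phi\colon \pi_1(N)\twoheadrightarrow \pi_1(G)$; the key observation that $\phi$ kills the vertex and edge groups is what makes the bookkeeping work. In fact the paper's homomorphism factors through your $\phi$: a self-loop is never in a spanning tree, so $\sum_i \gamma\cdot[T_i] \bmod 2$ is exactly the composite $\pi_1(N)\to\pi_1(G)\to\ZkZ{2}$ that reads off the mod-$2$ exponents of the self-loop generators. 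The paper's construction is thus the minimal instance of yours with $Q=\ZkZ{2}$; it is shorter and of smallest possible degree, whereas your version makes the verification explicit and would give finer control over the covering graph if that were ever needed.
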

\begin{proof}
Let $T_1, \ldots T_k \in \mathcal{T}_N$ be the tori along which blocks are self-pasted.
A suitable cover of $N$ is the cover induced by the kernel of the map
\begin{align*}
\pi_1(N) &\rightarrow \ZkZ{2} \\
\gamma &\mapsto \sum_i \gamma \cdot [T_i].
\end{align*}
\end{proof}
Note that taking further covers with the induced graph structure cannot
reintroduce self-pastings.

\begin{lem}\label{PositiveGenus}
	Let $\Sigma$ be a connected surface of negative Euler characteristic 
	and $d \geq 3$ be a natural number. 
	Then there is a finite cover $p\colon \widetilde \Sigma \rightarrow \Sigma$
	such that $\widetilde \Sigma$ has positive genus and $p$ restricts
	to the $d$-fold connected cover on each boundary component.
\end{lem}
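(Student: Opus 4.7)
My plan is to apply Lemma~\ref{SurfaceBoundaryCover} directly to $\Sigma$ and $d$ to obtain a finite connected cover $\Map p {\widetilde\Sigma} \Sigma$ of some degree $n$ which restricts to the $d$-fold connected cover on each boundary component of $\widetilde\Sigma$, and then to verify that the genus of $\widetilde\Sigma$ is automatically positive under the strengthened hypothesis $d\geq 3$. No further modification of the cover is needed; the lemma is essentially a refinement of the previous one, and the role of the strengthened hypothesis $d \geq 3$ (as opposed to $d \geq 2$) is precisely to force positive genus.

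For the genus computation, write $g, k$ for the genus and number of boundary components of $\Sigma$ and $\widetilde g, \widetilde k$ for those of $\widetilde \Sigma$. Since $p$ restricts to a $d$-fold connected cover on each of the $\widetilde k$ boundary circles of $\widetilde \Sigma$ and these together cover the $k$ boundary circles of $\Sigma$, counting preimages yields $\widetilde k = nk/d$. Combining this with the multiplicativity $\chi(\widetilde \Sigma) = n \chi(\Sigma)$ and the standard formula $\chi = 2 - 2\cdot\text{genus} - \#\partial$ gives an explicit expression for $2\widetilde g$ as a function of $n, g, k, d$, of the shape $2\widetilde g = 2 + n\bigl(2g + k(1 - 1/d) - 2\bigr)$.

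The only delicate point is verifying $\widetilde g \geq 1$, and this is the main obstacle. I would split into cases: if $g \geq 1$ the bracketed quantity is manifestly non-negative; if $g = 0$ then the hypothesis $\chi(\Sigma) < 0$ forces $k \geq 3$, and combining this with $1 - 1/d \geq 2/3$ (which uses $d \geq 3$) suffices to make the bracketed quantity non-negative as well. Hence $2\widetilde g \geq 2$ in both cases. The hypothesis is sharp: for a pair of pants with $d = 2$ the construction of Lemma~\ref{SurfaceBoundaryCover} produces a planar surface, so strengthening the hypothesis is genuinely necessary.
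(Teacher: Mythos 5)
Your proposal is correct and follows essentially the same route as the paper: invoke Lemma~\ref{SurfaceBoundaryCover}, combine multiplicativity of $\chi$ with the boundary count $\widetilde k = nk/d$, and split into the cases $g\geq 1$ and $g=0$ (where $\chi<0$ forces $k\geq 3$ and $d\geq 3$ gives $1-1/d\geq 2/3$). The only addition is your sharpness remark about the pair of pants with $d=2$, which is a nice sanity check but not needed.
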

\begin{proof}
	Pick a connected cover $p_\Sigma\colon \widetilde \Sigma \rightarrow \Sigma$ which
	restricts to the $d$-fold connected cover on each boundary component.
	Its existence is guaranteed by 
	Lemma \ref{SurfaceBoundaryCover}. Denote the degree of $p_\Sigma$ by $n$.
	The genus $g(\Sigma)$ and the Euler characteristic $\chi(\Sigma)$ 
	are related by the equation
	\begin{align*}
		\chi(\Sigma) = 2 - 2g(\Sigma) - b_0 (\partial \Sigma).
	\end{align*}
	The Euler characteristic is multiplicative under finite covers. Therefore we obtain
	the equations
	\begin{align*}
	2 - 2g (\widetilde \Sigma) - b_0 (\partial \widetilde \Sigma) &= 
	n \left (2 - 2g(\Sigma) - b_0 (\partial \Sigma) \right)\\
	\Rightarrow 2 - 2g (\widetilde \Sigma) -  \left( n / d\right)  b_0 (\partial \Sigma)  &=
	 n \left (2 - 2g(\Sigma) - b_0 (\partial \Sigma) \right)\\
	\Rightarrow	2 - 2g (\widetilde \Sigma) &= 
	n \left (2 - 2g(\Sigma) - \frac{d-1}{d} \cdot b_0 (\partial \Sigma) \right)
	\end{align*}
	If $g(\Sigma) > 0$ holds, then we obtain the inequality $g(\widetilde \Sigma) \geq g(\Sigma) > 0$.
	Otherwise $b_0(\partial \Sigma) \geq 3$ has to hold because $\Sigma$ has negative Euler characteristic.
	Thus the right hand side of the above
	equation is non-positive and so $g(\widetilde \Sigma) > 0$.
\end{proof}

\begin{lem}\label{PosCover}
	The manifold $N$ has a finite cover $p\colon \widetilde N \rightarrow N$
	such that the induced graph structure for $\widetilde N$ is reduced
	and all block surfaces have positive genus.
\end{lem}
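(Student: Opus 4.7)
The plan is to pass to a finite cover block by block, using Lemma \ref{PositiveGenus} to arrange positive genus on each block surface, and then to glue these covers via Lemma \ref{GlueCovers}. Fix the composite graph structure $\mathcal{T}_N$ on $N$, so each block $B \in \mathcal{B}_N$ is diffeomorphic to $\Sigma_B \times S^1$ with $\Sigma_B$ a connected surface of negative Euler characteristic. Pick a natural number $d \geq 3$, chosen uniformly across the blocks.

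For each block $B$, apply Lemma \ref{PositiveGenus} with this $d$ to obtain a connected finite cover $p_B \colon \widetilde \Sigma_B \to \Sigma_B$ whose target has positive genus and which restricts to the $d$-fold connected cover on each boundary component. Taking the associated $d$-characteristic cover
\begin{align*}
p_B \times z^d \colon \widetilde \Sigma_B \times S^1 \to \Sigma_B \times S^1 = B
\end{align*}
gives a $d$-characteristic cover of $B$ whose block surface has positive genus. Assembling these over all blocks yields a $d$-characteristic cover $p' \colon \bigsqcup_B \widetilde \Sigma_B \times S^1 \to N|\mathcal{T}_N$, to which Lemma \ref{GlueCovers} applies, producing the desired finite cover $p \colon \widetilde N \to N$ together with an identification $\widetilde N | \mathcal{T}_{\widetilde N} \cong \bigsqcup_B \widetilde \Sigma_B \times S^1$. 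By construction, each block of $\widetilde N$ has a block surface of positive genus.

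It remains to verify that the induced graph structure $\mathcal{T}_{\widetilde N}$ is reduced. For any torus $\widetilde T \in \mathcal{T}_{\widetilde N}$ covering some $T \in \mathcal{T}_N$, the Seifert fibres of the blocks $B_\pm(T)$ lift under the $d$-characteristic cover to Seifert fibres of $B_\pm(\widetilde T)$ that are positive multiples of the fibre classes upstairs; intersecting in $\widetilde T$ and using that the covering $\widetilde T \to T$ has non-zero degree, one obtains $c(\widetilde T) = k \cdot c(T)$ for some non-zero integer $k$ depending on the local degrees. Since the original composite graph structure is reduced, i.e. $c(T) \neq 0$ for every $T \in \mathcal{T}_N$, it follows that $c(\widetilde T) \neq 0$ for every $\widetilde T \in \mathcal{T}_{\widetilde N}$.

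The main subtlety is the last verification: one must be careful that the Seifert fibre orientations glue coherently on the cover and that no accidental identification makes the computation of $c(\widetilde T)$ zero. Both issues are handled by the fact that $p$ is $d$-characteristic on every graph torus, so the preimages of the oriented fibre curves $\gamma_\pm \subset T$ are again oriented embedded curves in $\widetilde T$ whose intersection number is, up to a positive multiplicative factor coming from the covering degrees, equal to $c(T)$.
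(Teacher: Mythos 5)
Your proposal follows essentially the same approach as the paper: apply Lemma~\ref{PositiveGenus} block by block to get $d$-characteristic covers with positive-genus block surfaces, glue them via Lemma~\ref{GlueCovers}, and check that the fibre-intersection numbers stay non-zero. The paper's version is slightly sharper here — for a $d$-characteristic cover one actually has $c(\widetilde T) = c(T)$ exactly (the covering degree $d^2$ in the intersection form cancels against the degree-$d$ scaling of each fibre class), whereas you only record that $c(\widetilde T)$ is a non-zero multiple of $c(T)$, which still suffices.
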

\begin{proof}
	By Lemma \ref{PositiveGenus} there is a $3$-characteristic 
	cover $p_X\colon \widetilde X \rightarrow X$ for every block $X \in \mathcal{B}$
	such that $\widetilde X$ is diffeomorphic to $\Sigma_{\widetilde X} \times S^1$ 
	with $\Sigma_{\widetilde X}$ being a surface
	of positive genus. By Lemma \ref{GlueCovers} we can glue these covers
	together and obtain a finite cover $p\colon \widetilde N \rightarrow N$ 
	with the induced graph structure. This graph structure is reduced as
	for every graph torus $T \in \mathcal{T}$ and every component $\widetilde T$ of $p^{-1}(T)$ 
	we have the equality $c(\widetilde T) = c(T)$. 
	The condition on the block surfaces holds by construction.
\end{proof}

The next lemma shows how one can extend a cohomology class 
with coefficients in $\IZ_k$ from the boundary of a block to the block itself by 
passing to a finite cover. This is not possible for integral cohomology classes.
\begin{lem}\label{ClassExtension}
	Let $X:= \Sigma \times S^1$ be the trivial circle bundle over a connected surface
	of positive genus. Fix a natural number $k \geq 1$.
	Let $\alpha_{\partial X} \in H^1( \partial X ; \ZkZ k)$ be a cohomology class
	which evaluates to $1$ on the $S^1$-factor, i.e. $\alpha_{\partial_X}$ satisfies
	\begin{align*}
		\langle \alpha_{\partial X}, [\{x\} \times S^1] \rangle = 1
	\end{align*}
	for all $x \in \partial \Sigma_X$.
	Then there is a $1$-characteristic finite cover 
	$\Map p {\widetilde X} X$ and a class 
	$\alpha_{\widetilde X} \in H^1( \widetilde X; \ZkZ k)$ such that
	\begin{align*}
		p^* \alpha_{\partial X} = i^* \alpha_{\widetilde X} 
		\in H^1( \partial \widetilde X; \ZkZ k),
	\end{align*}
	where $i\colon \partial \widetilde X \rightarrow \widetilde X$
	denotes the inclusion of the boundary.
\end{lem}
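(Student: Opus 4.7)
The plan is to reduce the extension problem to extending a cohomology class on $\partial \Sigma$ across a suitably chosen cover of $\Sigma$, and then use the positive genus hypothesis to construct that cover.

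First I would decompose $\alpha_{\partial X}$ along the product structure on $\partial X = \partial \Sigma \times S^1$. Let $s \in H^1(S^1; \ZkZ k)$ denote the canonical generator and let $\pi_{S^1} \colon X \to S^1$ be the projection. By hypothesis $(\pi_{S^1}|_{\partial X})^* s$ and $\alpha_{\partial X}$ agree on every $\{x\} \times S^1$, so by Künneth the difference $\alpha_{\partial X} - (\pi_{S^1}|_{\partial X})^* s$ is pulled back from $\partial \Sigma$: there exists $\tilde \alpha \in H^1(\partial \Sigma; \ZkZ k)$ with
\begin{align*}
\alpha_{\partial X} = (\pi_{S^1}|_{\partial X})^* s + (\pi_\Sigma|_{\partial X})^* \tilde \alpha,
\end{align*}
where $\pi_\Sigma \colon X \to \Sigma$ is the other projection. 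Since $\pi_{S^1}^* s$ is already defined on all of $X$, the task reduces to extending $\tilde \alpha$ from $\partial \Sigma$ to some cover of $\Sigma$.

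Next I would identify the obstruction. From the long exact sequence of the pair $(\Sigma, \partial \Sigma)$ together with Lefschetz duality, $H^2(\Sigma, \partial \Sigma; \ZkZ k) \cong \ZkZ k$, and the connecting homomorphism $H^1(\partial \Sigma; \ZkZ k) \to H^2(\Sigma, \partial \Sigma; \ZkZ k)$ is identified with $(a_1, \ldots, a_m) \mapsto \sum_i a_i$, where $a_i := \langle \tilde \alpha, [\partial_i \Sigma]\rangle$. Thus $\tilde \alpha$ extends to $\Sigma$ precisely when $\sum_i a_i \equiv 0 \pmod k$; in general this obstruction need not vanish, which is why passing to a cover is necessary.

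Then I would construct the cover. Capping each boundary component of $\Sigma$ with a disk yields a closed surface $\hat \Sigma$ of the same positive genus, so $H_1(\hat \Sigma; \IZ)$ admits a surjection onto $\ZkZ k$. Composing $\pi_1(\Sigma) \to \pi_1(\hat \Sigma) \to H_1(\hat \Sigma; \IZ) \to \ZkZ k$ yields a surjection killing every boundary loop, and the corresponding connected degree-$k$ cover $p_\Sigma \colon \widetilde \Sigma \to \Sigma$ is trivial on each boundary component. Setting $\widetilde X := \widetilde \Sigma \times S^1$ and $p := p_\Sigma \times \id_{S^1}$ gives a $1$-characteristic cover of $X$. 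Above each $\partial_i \Sigma$ lie $k$ boundary circles of $\widetilde \Sigma$, each mapping homeomorphically, so the pullback of $\tilde \alpha$ to $\partial \widetilde \Sigma$ takes the value $a_i$ on each of these $k$ circles. The obstruction to extending this pullback to $\widetilde \Sigma$ is therefore $k \sum_i a_i \equiv 0 \pmod k$, hence vanishes. Choosing an extension $\hat \beta \in H^1(\widetilde \Sigma; \ZkZ k)$, the class $\alpha_{\widetilde X} := (\pi_{S^1} \circ p)^* s + \pi_{\widetilde \Sigma}^* \hat \beta$ restricts on $\partial \widetilde X$ to $p^* \alpha_{\partial X}$, as required.

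The main obstacle is verifying that the obstruction to extending a cohomology class from $\partial \Sigma$ to $\Sigma$ is the sum of its boundary values and that pulling back along a degree-$k$ cover trivial on the boundary multiplies this sum by $k$. The positive genus hypothesis enters exactly here: it guarantees the existence of a connected degree-$k$ cover of $\Sigma$ that is trivial on every boundary component.
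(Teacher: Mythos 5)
Your proposal is correct and takes essentially the same approach as the paper: decompose $\alpha_{\partial X}$ via Künneth, identify the obstruction to extending the $\partial\Sigma$-component as its pairing with $\partial[\Sigma]$ in $H^2(\Sigma,\partial\Sigma;\ZkZ k)\cong\ZkZ k$, and kill it by passing to a cover of $\Sigma$ trivial on $\partial\Sigma$ whose degree is divisible by $k$. The only difference is the choice of cover (you use a degree-$k$ cover built by capping off and mapping $H_1$ of the closed surface onto $\ZkZ k$, whereas the paper uses the cover associated to $\pi_1(\Sigma)\to H_1(\Sigma,\partial\Sigma;\ZkZ k)$), but this is immaterial.
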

\begin{proof}
	The projections of the product $\partial X=\partial \Sigma \times S^1$
	induce an isomorphism $H^1(\partial X; \ZkZ k) \cong H^1(\partial \Sigma; \ZkZ k) \oplus H^1(S^1; \ZkZ k)$.
	Denote by $\beta_{\partial \Sigma} + \theta$ the image of $\alpha_{\partial X}$ 
	under this isomorphism.
	We refer by $p_\Sigma \colon \widetilde \Sigma \rightarrow \Sigma$ to the cover
	corresponding to the kernel of the homomorphism
	\begin{align*}
		\pi_1( \Sigma ) \rightarrow H_1(\Sigma, \partial \Sigma;\ZkZ k).
	\end{align*}
	Its restriction $p_{\partial \Sigma}$ to the boundary of $\widetilde \Sigma$ is the
	trivial disconnected cover of order  $|H_1(\Sigma, \partial \Sigma;\ZkZ k)|$.
	We prove that there is a class $\beta_{\widetilde \Sigma} \in H^1(\widetilde \Sigma; \ZkZ k)$ 
	such that 
	\begin{align*}
		i^* \beta_{\widetilde \Sigma} = p_{\partial \Sigma}^* \left( \beta_{\partial \Sigma} \right)
	\end{align*}
	Note that the only obstruction for the existence of such a $\beta_{\widetilde \Sigma}$
	is that
	\begin{align*}
		\langle p_{\partial \Sigma}^* \left( \beta_{\partial \Sigma}\right) , \partial [\widetilde \Sigma] \rangle = 0
	\end{align*}
	as can be seen from the long exact sequence of the pair 
	$(\widetilde \Sigma, \partial \widetilde \Sigma)$. But as the cover multiplies the boundary
	components we have
	\begin{align*}
		\langle p_{\partial \Sigma}^* \left( \beta_{\partial \Sigma}\right) , \partial [\widetilde \Sigma] \rangle  &=\langle   \beta_{\partial \Sigma} , {p_{\partial \Sigma}}_* \partial [\widetilde \Sigma] \rangle\\
		&= \langle   \beta_{\partial \Sigma} , {\partial p_{\Sigma}}_*  [\widetilde \Sigma] \rangle  \\
		&=|H_1(\Sigma, \partial \Sigma;\ZkZ k)| \langle   \beta_{\partial \Sigma}, \partial [ \Sigma] \rangle\\
		&= 0 \in \ZkZ k
	\end{align*}
	as $|H_1(\Sigma, \partial \Sigma;\ZkZ k)|$ is divisible by $k$. We denote the characteristic cover 
	associated to $p_\Sigma$ by $p := p_\Sigma \times S^1$.
	It is $1$-characteristic and the element $\alpha_{\widetilde X} \in H^1(\widetilde X; \ZkZ k)$ 
	mapping to  $\beta_{\widetilde \Sigma} + \theta$ has the required properties.
\end{proof}

Now we are able to prove the main theorem of this section and
one of the main technical results of this article.
\begin{thm}\label{GraphRes}
	Let $N$ be a graph manifold of composite type.
	If $d \geq 2$ is coprime to all fibre-intersection numbers of $N$, then there is
	\begin{enumerate}
	\item a finite cover  $p\colon M \rightarrow N$, 
	\item a composite graph structure on $M$,
	\item a cohomology class $\alpha\in H^1(M; \ZkZ{d})$ 
	such that for each block $B \in \mathcal{B}_M$ the class $\theta_B$ of a
	Seifert fibre evaluates to
	\begin{align*}
		\langle \alpha, \theta_B \rangle &= 1 \in \ZkZ d.
	\end{align*}
	\end{enumerate}
\end{thm}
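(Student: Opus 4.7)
The plan is to build $\alpha$ block by block: fix the required value on each graph torus in a way forced by the coprimality assumption, then extend across each block at the cost of a local cover, and finally glue everything into a single cover of $N$ and recombine via Mayer--Vietoris. First, I would normalize $N$ by applying Lemma~\ref{Selfpastings} and Lemma~\ref{PosCover} so that $N$ has no self-pastings and every block surface has positive genus; the covers involved are trivial on Seifert fibres, so all fibre-intersection numbers are preserved and $d$ remains coprime to them.

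For every $T \in \mathcal{T}_N$, the hypothesis that $d$ is coprime to $c(T)=[\gamma_+(T)]\cdot[\gamma_-(T)]$ means $\{[\gamma_+],[\gamma_-]\}$ is a basis of $H_1(T;\ZkZ d)$, and this forces the existence of a unique class $\alpha_T \in H^1(T;\ZkZ d)$ with $\langle \alpha_T,[\gamma_\pm]\rangle=1$. These assemble into a class $\alpha_\partial \in H^1(\partial(N|\mathcal{T}_N);\ZkZ d)$ whose restriction $\alpha_{\partial B}$ to each block $B \cong \Sigma_B \times S^1$ evaluates to $1$ on the $S^1$-factor of every boundary torus of $B$ --- exactly the hypothesis of Lemma~\ref{ClassExtension}. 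Applying that lemma to each block yields a $1$-characteristic cover $\widetilde B \to B$ and a class $\alpha_{\widetilde B} \in H^1(\widetilde B;\ZkZ d)$ restricting on $\partial \widetilde B$ to the pullback of $\alpha_{\partial B}$.

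The disjoint union of the covers $\widetilde B \to B$ is a $1$-characteristic cover of $N|\mathcal{T}_N$, so Lemma~\ref{GlueCovers} produces a cover $p\colon M \to N$ whose blocks are copies of the $\widetilde B$. Each such block is a product of a connected surface of negative Euler characteristic (a cover of $\Sigma_B$) with $S^1$, and $p$ restricts to a diffeomorphism on every graph torus, so $c(\widetilde T)=c(T)\neq 0$; hence the inherited graph structure on $M$ is composite and reduced. On any graph torus of $M$ the restrictions of $\alpha_{\widetilde B_+}$ and $\alpha_{\widetilde B_-}$ from the two adjacent blocks both equal the pullback of the relevant $\alpha_T$ and therefore agree; a Mayer--Vietoris argument applied to the decomposition $M = (M|\mathcal{T}_M) \cup \text{(tubular neighbourhoods of }\mathcal{T}_M\text{)}$ then produces the desired class $\alpha \in H^1(M;\ZkZ d)$, and the identity $\langle \alpha, \theta_B\rangle = 1$ is immediate from the construction of $\alpha_{\widetilde B}$.

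The hard part will be the local extension step, Lemma~\ref{ClassExtension}: the obvious obstruction coming from the long exact sequence of the pair $(\Sigma_B,\partial \Sigma_B)$ typically does not vanish, so one is forced to pass to a finite cover of $\Sigma_B$ that multiplies each boundary component by a factor divisible by $d$ in order to kill it. Once that local problem is handled and $N$ has been normalized as above, the $d$-characteristic gluing machinery of Lemma~\ref{GlueCovers} and the Mayer--Vietoris recombination are essentially formal.
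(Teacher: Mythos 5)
Your proposal is correct and follows the paper's proof essentially step for step: normalize via Lemmas~\ref{Selfpastings} and~\ref{PosCover}, use coprimality of $d$ and $c(T)$ to fix $\alpha_T$ on each graph torus (your basis observation is equivalent to the paper's explicit formula $c(T)^{-1}([\gamma_+]\cdot{-}-[\gamma_-]\cdot{-})$), extend across blocks via Lemma~\ref{ClassExtension}, glue the covers with Lemma~\ref{GlueCovers}, and recombine the block classes by Mayer--Vietoris. One small inaccuracy: the $3$-characteristic covers used in Lemma~\ref{PosCover} are \emph{not} trivial on the Seifert fibres (the $S^1$-factor is covered with degree $3$), but the fibre-intersection numbers $c(T)$ are nonetheless preserved because the lifted Seifert fibre classes remain a basis of $H_1(\widetilde T)$ with the same pairing, as the paper notes in the proof of Lemma~\ref{PosCover}.
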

\begin{proof}
	By Lemma \ref{Selfpastings} we may assume that $N$ has no self-pastings.
	Furthermore, by Lemma~\ref{PosCover} we assume that all block surfaces have
	positive genus.

	We construct the class $\alpha$ in a finite cover.
	Define $\alpha_T \in \Hom_\IZ \left(H_1(T;\IZ), \ZkZ d \right) 
	= H^1(T; \ZkZ d)$ by the equation:
	\begin{align*}
		\langle \alpha_T, \beta \rangle 
		:= c(T)^{-1}\left( [\gamma_+] \cdot \beta - [\gamma_-] \cdot \beta\right).
	\end{align*}
	Here we have used that each number $c(T)$ is coprime to $d$ and thus
	$c(T)$ is invertible in $\ZkZ d$.
	It has the defining property that $\langle \alpha_T, [\gamma_\pm] \rangle = 1$.
	
	For each block $B \in \mathcal{B}_N$ we do the following:
	There is a unique class $\alpha_{\partial B} \in H^1(\partial B; \ZkZ d)$
	such that $\alpha_{\partial B}$ restricts to $\alpha_T$ on $T \subset \partial B$.
	By Lemma \ref{ClassExtension} there is a $1$-characteristic cover 
	$p_B\colon \widetilde B \rightarrow B$ and a class $\alpha_{\widetilde B} \in H_1 \big (\widetilde B; \ZkZ d \big)$
	extending $p^*_B \alpha_{\partial B}$. 
	We combine the covers $p_B$ with Lemma \ref{GlueCovers} to a cover $p\colon M \rightarrow N$.
	The manifold $M$ is equipped with the induced composite graph structure.

	If $\theta_{\widetilde B}$ is the class of a Seifert fibre in the block 
	$\widetilde B \in \mathcal{B}_M$, then $\theta_{\widetilde B} = \theta_{\partial \widetilde B}$ with $\theta_{\partial \widetilde B}$ being
	the class of a Seifert fibre in the boundary of $\widetilde B$. Assuming $\widetilde B$ covers the block $B$, we obtain
	\begin{align*}
		\langle \alpha_{\widetilde B}, \theta_{\widetilde B} \rangle 
		&= \langle \alpha_{\widetilde B}, \theta_{\partial \widetilde B}\rangle 
		= \langle p_B^* \alpha_{\partial B}, \theta_{\partial \widetilde B}\rangle = 1,
	\end{align*}
	where in the last equality we used that $p_B$ is $1$-characteristic.

	The last step is to combine the classes $\alpha_{B} \in H^1(B; \ZkZ d)$ with $B \in \mathcal{B}_M$
	to a class $\alpha \in H^1(M; \ZkZ d)$. For this we are going to use the
	Mayer-Vietoris exact sequence far below. Define maps $\phi_B$ for
	each block $B$ by
	\begin{align*}
		\phi_{B} \colon H^1(B; \ZkZ d) &\rightarrow \bigoplus_{T \subset \partial B} H^1(T; \ZkZ d)\\
			\alpha &\mapsto \Sigma_{T \subset \partial B}
						\begin{cases} 
						j_T^*\alpha & T \text{ carries the boundary orientation of $B$}\\
						-j_T^* \alpha & \text{ otherwise}
						\end{cases},
	\end{align*}
	where $j_T \colon T \hookrightarrow B$ is the inclusion.
	Denote by $\phi$ the map 
	\begin{align*}
		\oplus_{B \in \mathcal{B}_M} \phi_B \colon
		\bigoplus_{B \in \mathcal{B}_M} H^1(B; \ZkZ d)
		\rightarrow \bigoplus_{T \in \mathcal{T}_M} H^1(T; \ZkZ d).
	\end{align*}
	The corresponding Mayer-Vietoris sequence is the exact sequence
	\begin{align*}
		\bigoplus_{T \in \mathcal{T}_M} H^1(T; \ZkZ d)
		\overset{\phi}{\leftarrow} \bigoplus_{B \in \mathcal{B}_M} H^1(B; \ZkZ d)
		\leftarrow H^1(M; \ZkZ d).
	\end{align*}
	Note we have the equality
	\begin{align*}
		\phi \Big(\sum_{B \in \mathcal {B}_M} \alpha_B \Big)
		 = \sum_{T \in \mathcal {T}_M} j_T^*\alpha - j_T^*\alpha
		 = 0.
	\end{align*}
	Thus there is an $\alpha \in H^1( M; \ZkZ d)$ with the property that $\alpha$ restricted
	to a block $B \in \mathcal{B}_M$ is $\alpha_{B}$. 
	The class $\alpha$ evaluates correctly on the class $\theta_B$ of a Seifert fibre in
	a block $B$ as 
	$\langle \alpha, \theta_B \rangle = \langle \alpha_{B}, \theta_B \rangle = 1$.
\end{proof}

\section{The Thurston Norm}
Let $N$ be a $3$-manifold with toroidal boundary (if any).
Thurston \cite{Thurston86} introduced the following complexity for surfaces.
It is better behaved than the Euler characteristic and still multiplicative 
under finite covers. Before we come to the next definition, recall our convention that all surfaces are oriented. 
\begin{defn}\label{ThurstonNorm}\label{SurfacesComplexity}
\begin{enumerate}
\item  The \emph{complexity} of a surface $\Sigma$ with components $\Sigma_i$ is
\begin{align*}
	\chi_-(\Sigma) &:= \sum_{i} \max(0, -\chi(\Sigma_i)).
\end{align*}
\item For a homology class $\sigma \in H_2(N, \partial N; \IZ)$, the
	\emph{Thurston norm} of $\sigma$ is
	\begin{align*}
	\Thur \sigma :=\min \{ \chi_-(\Sigma) : \Sigma \text{ a properly embedded surface with } \sigma = [\Sigma]\}.
	\end{align*}
	For a class $\beta \in H^1(N; \IZ)$ the Poincaré duality map 
	$\PD\colon H^1(N; \IZ) \rightarrow H_2(N, \partial N; \IZ)$ is used to transfer the norm:
	\begin{align*}
		\Thur \beta := \Thur {\PD(\beta)}.
	\end{align*}
\end{enumerate}
\end{defn}
\begin{rem}
Thurston \cite[Theorem 1]{Thurston86} showed that $\Thur .$ is a semi-norm, 
i.e.  for all $n \in \IZ$ and all $\sigma, \rho \in H_2(N;\IZ)$ one has
$\Thur {\sigma + \rho} \leq \Thur \sigma + \Thur \rho$ and $\Thur {n \sigma} = |n| \Thur \sigma$.
The last equation implies that we can extend it to a semi-norm on $H_2(N; \partial N; \IQ)$.
\end{rem}
\begin{prop}[Eisenbud-Neumann]\label{ThurstonSplit}
	Let $\mathcal{T}$ be a collection of incompressible embedded tori in $N$.
	Denote by $\mathcal{B}$ the collection of components of $N|\mathcal T$
 	and by $i_B \colon B \hookrightarrow N$ the inclusion of a component $B \in \mathcal{B}$.
	Then the Thurston norm of each $\beta \in H^1(N;\IZ)$ satisfies the equality
	\begin{align*}
		\Thur \beta = \sum_{B \in \mathcal B} \Thur {i_B^* \beta }.
	\end{align*}
\end{prop}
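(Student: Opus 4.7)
The plan is to prove the asserted equality by showing the two inequalities separately.

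For the inequality $\Thur\beta \geq \sum_{B\in\mathcal{B}} \Thur{i_B^* \beta}$, I would pick a properly embedded surface $\Sigma \subset N$ representing $\PD(\beta)$ with $\chi_-(\Sigma) = \Thur\beta$ and make it transverse to every torus of $\mathcal{T}$. The intersection $\Sigma \cap \mathcal{T}$ is a finite collection of simple closed curves. Standard surgery (using the incompressibility of the tori in $\mathcal{T}$ and the fact, due to Thurston, that a norm-minimizing $\Sigma$ may be chosen without sphere or disc components) allows one to remove any intersection circle that bounds a disc in $\Sigma$ or in some $T \in \mathcal{T}$: each such removal is achieved by cut-and-paste using a compressing disc followed by an isotopy off $\mathcal{T}$, an operation that does not increase $\chi_-$. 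After finitely many such reductions, $\Sigma \cap \mathcal{T}$ consists only of curves essential in their tori, and in particular $\chi_-(\Sigma)=\sum_{B\in\mathcal{B}} \chi_-(\Sigma\cap B)$ since cutting along a $1$-manifold preserves Euler characteristic and no new spherical or disc components can appear. By naturality of Poincaré--Lefschetz duality, each $\Sigma\cap B$ represents $\PD(i_B^*\beta) \in H_2(B,\partial B;\IZ)$, so $\chi_-(\Sigma\cap B) \geq \Thur{i_B^*\beta}$, and summing yields the desired inequality.

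For the reverse inequality $\Thur\beta \leq \sum_B \Thur{i_B^*\beta}$, I would pick, for every block $B$, a properly embedded surface $\Sigma_B \subset B$ realising $\Thur{i_B^*\beta}$. The problem is that for a graph torus $T$ lying between blocks $B_+(T)$ and $B_-(T)$, the boundary curves $\partial \Sigma_{B_\pm(T)} \cap T$ will usually not coincide as $1$-submanifolds of $T$, even though they both represent the same homology class in $H_1(T;\IZ)$, namely the Poincaré dual of $i_T^*\beta$. The key observation is that any embedded oriented $1$-manifold on a torus in a given homology class is isotopic, after possibly adding and cancelling pairs of oppositely oriented parallel copies of a primitive curve, to any other such. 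These modifications can be realised on $\Sigma_B$ by attaching annuli in a collar of $T$, which does not increase $\chi_-$ because annuli carry no negative Euler characteristic. After performing these adjustments on both sides of every $T \in \mathcal{T}$, the boundaries match curve by curve, and gluing yields a properly embedded surface $\Sigma \subset N$ representing $\PD(\beta)$ with $\chi_-(\Sigma) \leq \sum_B \chi_-(\Sigma_B) = \sum_B \Thur{i_B^*\beta}$.

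The main obstacle is the second inequality: one must carry out the boundary matching without inflating the Euler characteristic, and in particular one must check that the needed annulus attachments can be done disjointly from each other and from the interiors of the $\Sigma_B$. This is where the product structure of a tubular neighbourhood of each $T$ in $N$ is used, together with the classification of embedded $1$-submanifolds of the torus up to isotopy, so that canceling pairs can be removed and the remaining parallel copies standardised on both sides before gluing.
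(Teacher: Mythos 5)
Your two-inequality cut-and-paste argument is correct and is essentially the standard Eisenbud--Neumann argument that the paper simply cites (the paper's proof is a one-line reference to \cite[Proposition 3.5]{Eisenbud85}, noting that it applies to any collection of incompressible tori, not only JSJ tori). The one point you compress slightly is that before matching boundaries along a graph torus $T$ you must first cap off any inessential boundary circles of $\Sigma_B$ on $T$ (using incompressibility of $T$ and irreducibility of $N$, and noting that capping with a disc cannot increase $\chi_-$); once every boundary curve is essential, all are parallel to a single primitive class and your cancellation-by-annuli step goes through as stated.
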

\begin{proof}
The argument given in \cite[Proposition 3.5]{Eisenbud85} not only works for tori of a
JSJ-decomposition but for any collection of incompressible tori.
\end{proof}

\begin{prop}\label{ThurstonProduct}
	Let $\Sigma$ be a connected surface of negative Euler characteristic.
	The Thurston norm on $\Sigma \times S^1$ fulfils
	\begin{align*}
		\Thur \sigma = -\chi(\Sigma) \cdot |\langle \sigma, [S^1]\rangle|
	\end{align*}
	for all $\sigma \in H^1(\Sigma \times S^1; \IZ)$.
\end{prop}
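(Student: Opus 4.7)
The plan is to reduce to two easy cases via Künneth. Let $\pi_1\colon \Sigma \times S^1 \to \Sigma$ and $\pi_2\colon \Sigma \times S^1 \to S^1$ denote the projections. The Künneth theorem gives $H^1(\Sigma \times S^1; \IZ) \cong \pi_1^* H^1(\Sigma; \IZ) \oplus \pi_2^* H^1(S^1; \IZ)$, so I would write $\sigma = \pi_1^* \beta + n \cdot \pi_2^* \theta$, where $\theta \in H^1(S^1; \IZ)$ is the generator dual to $[S^1]$ and $n = \langle \sigma, [S^1]\rangle$, and then establish the inequality in both directions.

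For the upper bound, I exhibit a surface dual to each summand. The Poincaré dual $\PD(\pi_1^* \beta)$ is realised by $\gamma \times S^1$, where $\gamma \subset \Sigma$ is a properly embedded oriented $1$-manifold with $[\gamma] = \PD_\Sigma(\beta) \in H_1(\Sigma, \partial \Sigma; \IZ)$; each component of $\gamma \times S^1$ is an annulus or a torus, so $\chi_-(\gamma \times S^1) = 0$ and hence $\Thur{\pi_1^* \beta} = 0$. The dual $\PD(n \cdot \pi_2^* \theta)$ is represented by $|n|$ parallel copies of the fibre $\Sigma \times \{\text{pt}\}$, a properly embedded surface of complexity $|n| \cdot (-\chi(\Sigma))$. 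The triangle inequality then yields $\Thur \sigma \leq \Thur{\pi_1^* \beta} + \Thur{n \pi_2^* \theta} \leq -\chi(\Sigma) \cdot |n|$.

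For the lower bound I would invoke Thurston's theorem \cite{Thurston86} that the fibre of a surface bundle over $S^1$ is norm-minimising in its homology class. Since $\pi_2$ realises $\Sigma \times S^1$ as a fibration with fibre $\Sigma$, this yields $\Thur{\pi_2^* \theta} = -\chi(\Sigma)$, and by homogeneity of the semi-norm one obtains $\Thur{n \pi_2^* \theta} = |n| \cdot (-\chi(\Sigma))$. The reverse triangle inequality then gives
\begin{align*}
\Thur \sigma \geq \Thur{n \pi_2^* \theta} - \Thur{\pi_1^* \beta} = -\chi(\Sigma) \cdot |n|,
\end{align*}
matching the upper bound. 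The main obstacle is this lower bound, which crucially relies on Thurston's fibered-class theorem; without it there is no a priori reason a representative of $\sigma$ cannot have fewer components ``projecting efficiently'' onto $\Sigma$. The upper bound and the semi-norm manipulations are routine.
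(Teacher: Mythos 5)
Your proposal is correct and follows essentially the same route as the paper: show $\Thur{\pi_1^*\beta}=0$ by exhibiting annuli/tori dual to $\beta$, show $\Thur{\theta}=-\chi(\Sigma)$ via Thurston's norm-minimality of fibres, and combine via the (reverse) triangle inequality. The only cosmetic difference is that the paper gets the annuli/tori as preimages of regular values of a map $(x,z)\mapsto f_\beta(x)$ rather than writing $\gamma\times S^1$ directly.
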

\begin{proof}
	The Eilenberg-MacLane space $\Eilen(\IZ,1)$ is an oriented $S^1$. 
	We write $\theta \in H^1(\Eilen(\IZ,1), \IZ) = \Hom(\IZ, \IZ)$ for the
	element corresponding to the identity.
	For every manifold $X$ we can associate to each cohomology 
	class $\sigma \in H^1(X; \IZ)$ a smooth map 
	$f_\sigma : X \rightarrow S^1$ such that $\sigma = f_\sigma^* \theta$.
	
	First we prove that on each $\beta \in H^1(\Sigma; \IZ) \subset H^1(\Sigma \times S^1; \IZ)$
	the Thurston norm vanishes. Observe that preimages of a regular value
	of the map	
	\begin{align*}
		g: \Sigma \times S^1 &\rightarrow S^1 \\
		 	(x,z) &\mapsto f_\beta(x)
	\end{align*}
	are Poincaré dual to $\beta$. But all of them are either empty or tori.
	Thus the Thurston norm of $\beta$ vanishes, i.e. $\Thur \beta = 0$.

	Let $\beta + \lambda \theta$ be any element of $H^1(\Sigma;\IZ) \oplus H^1(S^1;\IZ)
	= H^1(\Sigma \times S^1;\IZ)$. Using the reverse triangle inequality, we obtain
	\begin{align*}
		\Thur { \beta + \lambda \theta } = |\lambda| \Thur \theta.
	\end{align*}
	But $\theta$ is Poincaré dual to the fibre of the fibre bundle
	\begin{align*}
		\Sigma \times S^1 &\rightarrow S^1\\
		(x, z) &\mapsto z.
	\end{align*}
	By \cite[Section 3]{Thurston86} a fibre of a fibre bundle is
	minimising the Thurston norm. Therefore the Thurston norm is $\Thur \theta = -\chi(\Sigma)$.
	We obtain the equation
	 \begin{align*}
	 	\Thur{\beta + \lambda \theta} = -|\lambda| \cdot \chi(\Sigma) 
		=-\chi(\Sigma) \cdot |\langle\beta + \lambda \theta, [S^1]\rangle|.
	\end{align*}
	\end{proof}
\section{Maximal abelian torsion}
We give an outline of the following section.
The main statement is Theorem~\ref{AlexanderThurston} showing
that a graph manifold admits
a cover in which the torsion norm, which will be defined in Definition~\ref{defn:TorsionNorm},
agrees with the Thurston norm. 

The most interesting case will be that of a graph manifold of
composite type. In this case the result will follow from a computation
of twisted Reidemeister torsion, which is defined in Definition~\ref{defn:twReidem}.
More exactly, we will find a cover~$M$ such that we can compute explicitly
the Reidemeister torsion~$\tau(M, \IC(t)_{\alpha\otimes \sigma})$,
twisted by the representation $\IC(t)_{\alpha\otimes \sigma}$, described in Construction~\ref{Constr:Twist}.
We will see that in this cover the equality $ \Thur \sigma = \width \tau(M, \IC(t)_{\alpha\otimes \sigma}) $ holds.
The claim that the torsion norm~$\TorN \sigma$ and the Thurston norm~$\Thur \sigma$ of
a class~$\sigma$ agree, then follows directly from the inequality stated in Proposition~\ref{Comparison}
\[ \Thur \sigma \geq \TorN \sigma \geq \width \tau(M, \IC(t)_{\alpha\otimes \sigma}). \]
\subsection{Torsion of chain complexes}
We are going to review the notion of torsion and
fix conventions. 
For this we follow \cite{Turaev01}, which
uses the multiplicative inverse convention of
the article \cite{Milnor66}.
Fix a field $\IK$ for the rest of the section.
\begin{defn}
Let $V$ be a $\IK$-vector space and 
$b = \{b_1, \ldots, b_k\}$, $c = \{c_1, \ldots, c_k\}$ two 
ordered bases of $V$.
The \emph{base change matrix} expressing $b$ in $c$ is the matrix 
$h \in \GL{k,\IK}$ given by
\begin{align*}
b_i = \sum_{j} h_{ij} c_j.
\end{align*}
We write $[b/c]:= \det h \in \IK^*$. 
\end{defn}

\begin{lem}
Let $(V,c)$, $(A,a)$, $(B,b)$ be based $\IK$-vector spaces and 
\begin{align*}
	0 \rightarrow A \rightarrow V \rightarrow B \rightarrow 0
\end{align*}
a short exact sequence. Let $\Map s B V$ be a section.
Then the list $a \cup s(b)$ is a basis of $V$ and
$[a \cup s(b)/c] \in \IK$ is independent of the choice of the section $s$.
\end{lem}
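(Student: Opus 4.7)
The plan is to treat the two assertions separately, using the direct sum decomposition of $V$ induced by the section $s$.

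For the first claim, the short exact sequence lets me identify $A$ with the kernel of the projection $\Map \pi V B$. Given the section $s$, every $v \in V$ decomposes uniquely as $v = (v - s(\pi(v))) + s(\pi(v))$, where the first summand lies in $A$ and the second in $s(B)$. This yields an internal direct sum decomposition $V = A \oplus s(B)$. Since the section $s$ is injective, it sends the basis $b$ of $B$ bijectively onto a basis $s(b)$ of the summand $s(B)$. Concatenating with the basis $a$ of $A$ produces a basis $a \cup s(b)$ of $V$.

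For the independence claim, I would compare two sections $s$ and $s'$ via their difference. Since $\pi \circ (s - s') = \id_B - \id_B = 0$, the map $s - s'$ takes values in $A$, so each vector $(s - s')(b_j)$ expands as a linear combination of the basis $a$. Writing the base change matrix that expresses $a \cup s(b)$ in the basis $a \cup s'(b)$, one obtains a block upper triangular matrix with identity blocks on the diagonal and the expansion coefficients of $(s-s')(b_j)$ filling the upper off-diagonal block. Its determinant is therefore $1$, and by the multiplicativity $[a \cup s(b)/c] = [a \cup s(b)/(a \cup s'(b))]\cdot [a \cup s'(b)/c]$ we conclude $[a \cup s(b)/c] = [a \cup s'(b)/c]$.

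There is no serious obstacle here; this is a standard piece of linear algebra underlying the definition of Reidemeister torsion. The only point requiring minor care is to order the basis $a \cup s(b)$ consistently in both settings so that the base change matrix has the stated block upper triangular form and its determinant collapses to $1$.
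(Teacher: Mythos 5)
Your proof is correct and complete: the internal direct sum $V = A \oplus s(B)$ handles the first claim, and the observation that $s - s'$ lands in $A$ gives a unitriangular base-change matrix between $a \cup s(b)$ and $a \cup s'(b)$, so the determinants $[a\cup s(b)/c]$ and $[a\cup s'(b)/c]$ coincide by multiplicativity. The paper itself does not spell out an argument but merely cites Turaev's book, and your argument is precisely the standard one given there; the only cosmetic quibble is that, with the paper's convention that the base-change matrix expresses the new basis in the old one, the matrix you describe is block \emph{lower} triangular rather than upper, but either way its determinant is $1$.
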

\begin{proof}
\cite[Section I.1 p. 2]{Turaev01}.
\end{proof}
Assume that we are given a based chain complex $C$. This
means $C$ is a chain complex, whose chain modules are based 
$\IK$-vector spaces $(C_i, c_i)$.
There is no condition on the boundary morphism with respect to the basis.
If we can arrange $C$ to be even acyclic, then for every degree $i$ we have 
the short exact sequence
\begin{align*}
	0 \rightarrow \Ima \partial_{i+1} \rightarrow C_i \rightarrow \Ima \partial_{i} \rightarrow 0.
\end{align*}
We pick a basis $b_i$ of $\Ima \partial_{i}$ and a split
$\Map {s_i} {\Ima \partial_{i}} {C_i}$. One can show
that 
\begin{align*}
	\tau(C) := \prod_i [b_{i+1} \cup s_i( b_{i}) /c_i ]^{{(-1)}^{i+1}} \in \IK^*
\end{align*}
is independent of all the choices made, see \cite[Lemma 1.3]{Turaev01}
\begin{defn}
Let $C$ be an acyclic based chain complex. The element
$\tau(C) \in \IK^*$ constructed above is called the \emph{torsion}
of $C$.
\end{defn}
If $c'$ and $c$ are two different bases for the acyclic chain complex $C$,
then the torsion of $C$ changes as follows when we switch from $c$ to $c'$: 
\begin{align*}
	\tau(C,c') =& [c/c']\cdot\tau(C,c) \in \IK^*\\
	\text{with } [c/c'] :=& \prod_i  [c_i/c'_i]^{(-1)^{i+1}}.
\end{align*}
It follows that the
torsion considered up to a sign is independent of the ordering 
of the basis, see \cite[Remark 1.4.1]{Turaev01}.

\subsection{Torsion of $3$-manifolds}
We continue by defining the twisted Reidemeister torsion
of a $3$-manifold. This invariant will then be related later
to the Thurston norm.

We assume that $M$ is equipped with a CW-structure, although
the twisted Reidemeister torsion will turn out to be independent
of this choice as well. Here the push-out data, not just
the filtration, will be part of the CW-structure. 
Furthermore, let us fix a
universal cover $\Map p {\widetilde M} M$. The fundamental group $\pi_1 := \pi_1(M)$
acts by deck transformations on $\widetilde M$.
We equip the manifold $\widetilde M$ with the induced CW-structure.
This gives the cellular complex of $\widetilde M$ the
structure of a chain complex of a free left $\IZ[\pi_1]$-modules. 
To calculate the torsion we need to equip this chain complex with a basis. 
This involves a choice.
\begin{defn}
\begin{enumerate} 
\item A \emph{fundamental family} for $M$ is a family $\mathfrak{e}$ of cells
	of $\widetilde M$ such that each cell of $M$ is covered by exactly one 
	cell in $\mathfrak{e}$.
\item The complex of $\IZ[\pi_1]$-modules of $\widetilde M$ described above
	equipped with the basis coming from a fundamental family $\mathfrak{e}$ 
	is denoted by $C(M, \mathfrak{e})$. From now on we exclusively refer to
	this chain complex as the \emph{cellular complex}.
\end{enumerate}
\end{defn}
Now we fix a fundamental family $\mathfrak{e}$ for $M$ and 
twist the complex $C(M, \mathfrak{e})$ with a suitable $(\IK, \IZ[\pi_1])$-bimodule $A$.
Pick a basis $\{a_1, \ldots, a_n\}$ of the underlying 
$\IK$-vector space of $A$. We equip the $\IK$-vector space 
$A \otimes_{\IZ[\pi_1]} C_k(M, \mathfrak{e})$ with the basis
\begin{align*}
	\{a_i \otimes e : e \in \mathfrak{e} \text{ a $k$-cell, } 1 \leq i \leq n  \}
\end{align*}
Now the chain complex $A \otimes_{\IZ[\pi_1]} C(M,\mathfrak{e})$ is based.
If additionally this complex is acyclic, we can define its torsion.
Through the bimodule structure of $A$, each element 
$g \in \pi_1$ gives rise to a $\IK$-linear map $\Map {g_A} A A$ of
the $\IK$-vector space $A$. The abelian group $\W A$ is defined by 
\begin{align*}
	\W A := \IK^*/ \langle \pm \det g_A : g \in \pi_1 \rangle.
\end{align*}
\begin{defn}\label{defn:twReidem}
	Suppose the complex $A \otimes_{\IZ[\pi_1]} C(M, \mathfrak{e})$ is acyclic, then
	\begin{align*}
		\tau(M, A) := \tau(A \otimes_{\IZ[\pi_1]} C(M, \mathfrak{e})) \in \W A 
	\end{align*}
	is the $A$-\emph{twisted Reidemeister torsion} of $M$.
\end{defn}
\begin{rem}
An elementary calculation shows that $\tau(M,A)$ is independent of the choice of basis of $A$.
This would also be true without considering it to be an element of $W(A)$.
As the notation suggests $\tau(M,A)$ is also independent of the choice of a fundamental family, 
see \cite[II.6.1]{Turaev01}. For this to hold, it was necessary to pass from $\IK^*$ to $W(A)$.
Furthermore, it is even independent of the choice
of a CW-structure by \cite[Theorem IV]{Kirby69}.
\end{rem}

The following bimodules will play a special role as they 
provide a link between the Thurston norm and further invariants
defined below.
\begin{constr}\label{Constr:Twist}
Denote by $\IC(t)$ the quotient field of the polynomial ring in one-variable.
Given a character $\Map \alpha {\pi_1(M)} {\ZkZ k} \subset \IC^*$ and 
a cohomology class $\sigma \in H^1(M;\IZ)$ on $M$, we define the following
ring homomorphism:
\begin{align*}
\Map { \rho_{\alpha\otimes \sigma} } {\IZ[\pi_1]&} {\IC(t)}\\
g &\mapsto \alpha(g) t^{\sigma(g)}
\end{align*}
Such a ring homomorphism gives $\IC(t)$ the structure
of a $(\IC(t), \IZ[\pi_1])$-bimodule. This bimodule we denote
by $\IC(t)_{\alpha\otimes \sigma}$.
\end{constr}

Note that the $\IZ[\pi_1]$-action on $\IC(t)_{\alpha\otimes \sigma}$ acts through
the scalars of the $\IC(t)$-vector space. This ensures that we do not lose
much information by the transition from $\IC(t)^*$ to $\W {\IC(t)_{\alpha\otimes \sigma}}$.
The following function is able to extract valuable information from the torsion.
\begin{defn}
	The function $\Map \width {\IC(t)} {\IZ \cup \{-\infty\}}$
	is the unique function such that for all $p(t), q(t) \in \IC(t)^*$
	we have
	\begin{align*}
		\width& (0) = -\infty \\
		\width& \left(\sum_{i=a}^b c_i t^i \right) = b-a \text{ if } c_a, c_b \neq 0 \\
		\width& \left( \frac{p(t)}{q(t)} \right) = \width p(t) - \width q(t).
	\end{align*}
\end{defn}
The next lemma shows that we can use the function $\width$ to extract information
from the twisted Reidemeister torsion.
\begin{lem}
The function $\Map \width {\IC(t)} {\IZ \cup \{-\infty \}}$ fulfils:
\begin{enumerate}
	\item For all $x,y \in \IC(t)$ we have the equality
	\begin{align*}
		\width(xy) &= \width(x) + \width(y).
	\end{align*}
	\item The restriction $\width\colon \IC(t)^* \rightarrow \IZ$ to the units of $\IC(t)$ 
	descends to the group 
	$\W {\IC(t)_{\alpha\otimes \sigma}}$.
\end{enumerate}
\end{lem}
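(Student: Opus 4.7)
The plan is to verify both claims by a direct computation, exploiting two facts: that $\IC$ is an integral domain (so the leading and trailing coefficients of a product of non-zero polynomials do not vanish), and that $\IC(t)_{\alpha\otimes\sigma}$ is one-dimensional as a $\IC(t)$-vector space.

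For part (1), I first check multiplicativity on non-zero polynomials. If $p(t) = \sum_{i=a}^{b} c_i t^i$ with $c_a, c_b \neq 0$ and $q(t) = \sum_{j=c}^{d} d_j t^j$ with $d_c, d_d \neq 0$, then $p(t)q(t)$ is supported in $[a+c, b+d]$ and its extreme coefficients are $c_a d_c$ and $c_b d_d$, both non-zero. Thus $\width(pq) = (b+d) - (a+c) = \width(p) + \width(q)$. For general non-zero rational functions, writing $x = p_1/q_1$ and $y = p_2/q_2$ with non-zero $q_i$, the third defining property of $\width$ combined with the polynomial case yields
\[ \width(xy) = \width(p_1 p_2) - \width(q_1 q_2) = \width(x) + \width(y). \]
The remaining cases involving zero are handled by the standard convention $-\infty + c = -\infty$ for every $c \in \IZ \cup \{-\infty\}$.

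For part (2), the key observation is that the underlying $\IC(t)$-vector space of $A := \IC(t)_{\alpha\otimes \sigma}$ is just $\IC(t)$, hence one-dimensional. Choosing $\{1\}$ as a basis, every $g \in \pi_1(M)$ acts on $A$ as multiplication by the unit $\alpha(g)\, t^{\sigma(g)} \in \IC(t)^*$, so $\det g_A = \alpha(g)\, t^{\sigma(g)}$ is a non-zero monomial and $\width(\det g_A) = 0$. Since also $\width(-1) = 0$, every generator of the subgroup $\langle \pm \det g_A : g \in \pi_1 \rangle$ lies in the kernel of the homomorphism $\width \colon \IC(t)^* \to \IZ$ established in part~(1). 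Therefore $\width$ descends to the quotient $\W{\IC(t)_{\alpha\otimes\sigma}}$. There is no real conceptual obstacle in this lemma; the only subtle point is that part~(1) genuinely uses $\IC$ being an integral domain, so that the extreme coefficients of a product cannot accidentally cancel.
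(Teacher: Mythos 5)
Your proof is correct and follows essentially the same route as the paper: part (1) is verified on polynomials using that $\IC$ is an integral domain and then extended to $\IC(t)$, and part (2) follows by computing $\det g_A = \alpha(g)\,t^{\sigma(g)}$ (a non-zero monomial of width zero) and noting that $\width$ therefore kills the subgroup $\langle \pm \det g_A : g \in \pi_1 \rangle$. You simply spell out the details (extreme coefficients, well-definedness via the quotient, the one-dimensionality of $A$) that the paper leaves implicit.
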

\begin{proof}
\begin{enumerate}
\item It is enough to check it for $x,y \in \IC[t]$ and then it can be
	deduced from the fact that $\IC$ does not have
	zero-divisors.
\item The second property follows from the fact that
	for all $p(t) \in \IC(t)^*$ and $g \in \pi_1$
	we get
	\begin{align*}
		\width (p(t) \cdot \det g_A)
		=\width (p(t)\alpha(g)t^{\sigma(g)})
		&=\width p(t)+ \width (\alpha(g)t^{\sigma(g)})\\
		&=\width p(t).
	\end{align*}
\end{enumerate}
\end{proof}

\subsection{Torsion of composite type graph manifolds}\label{CompositeFolds}
In this section, we will compute the twisted Reidemeister torsion,
for the representation~$\IC(t)_{\alpha \otimes \sigma}$, of
graph manifolds of composite type. We have to make further assumptions 
on the graph manifold but they can be arranged to hold by 
passing to a finite cover.

Let $N$ be a graph manifold of composite type 
and $b_1(N) \geq 2$. We pick a composite graph structure for $N$.
By taking a finite cover we may assume that $N$ has no self-pastings.

The homology class of the Seifert fibres in a block $B$ is denoted 
by $\theta_B \in H_1(B; \IZ)$.
Furthermore, denote the block surface of a block $B$ by $\Sigma_B$.
Additionally, we fix a cohomology class $\alpha \in H^1(N; \ZkZ k)$
for a natural number $k \geq 2$ which evaluates on every Seifert fibre in every block $B$ 
to $\alpha(\theta_B) = 1$.
By Theorem \ref{GraphRes} this situation can be achieved by taking a further finite cover.

We denote the collection of tori of
the graph decomposition by $\mathcal{T}_N$ and the collection of blocks
by $\mathcal{B}_N$.  Fix a CW-structure on $N$ such that the tori in $\mathcal{T}_N$
and the blocks in $\mathcal{B}_N$ 
become subcomplexes. As before we choose a universal cover $\Map p {\widetilde N} N$
and a fundamental family $\mathfrak{e}$ of $N$.

We have constructed the module $\IC(t)_{\alpha \otimes \sigma}$
in Construction~\ref{Constr:Twist}.
We are going to calculate $\tau(N, \IC(t)_{\alpha\otimes \sigma})$ for 
an arbitrary class $\sigma \in H^1(N; \IZ)$.
The Reidemeister torsion splits along the graph structure, which
makes calculations feasible.
This is made precise in Lemma \ref{TorsionSplit} below. 

We abbreviate $\IC(t)_{\alpha \otimes \sigma}$ by $A$. The blocks in $\mathcal{B}_N$ 
are CW-subcomplexes whose torsion is easily computable. 
The following allows us to relate the torsion of the blocks
to the torsion of all of $N$.
\begin{defn}
Let $B \subset N$ be a subcomplex of the CW-structure on $N$.
\begin{enumerate}
\item The CW-complex $p^{-1}(B)$ inherits a left $\IZ[\pi_1(N)]$-action.
We denote the subcollection of cells in $\mathfrak{e}$ which are contained
in $p^{-1}(B)$ by $\mathfrak{e}|_B$. The corresponding \emph{cellular complex}
of $\IZ[\pi_1(N)]$-modules equipped with $\mathfrak{e}|_B$ as a basis
is denoted by $C(B \subset N, \mathfrak{e}|_B)$.
\item Suppose $A \otimes_{\IZ[\pi_1(N)]} C(B \subset N, \mathfrak{e}|_B)$ is acyclic.
The corresponding torsion is denoted by
\begin{align*}
	\tau(B \subset N, A) := \tau\left(A \otimes_{\IZ[\pi_1]} C(B \subset N, \mathfrak{e}|_B)\right)
\end{align*}
\end{enumerate}
\end{defn}

We are going to construct a Mayer-Vietoris sequence which shows that the torsion
splits over the pieces of our graph decomposition. 
Define the following map of chain complexes:
\begin{align*}
	\phi_T \colon C(T \subset N) 
		&\rightarrow \bigoplus_{B \in \mathcal{B}_N} C(B \subset N)\\ 
	e &\mapsto (j_+)_* e -  (j_-)_* e,
\end{align*}
where $j_\pm \colon p^{-1}(T) \hookrightarrow p^{-1}(B_\pm(T))$ denotes the inclusion of the preimage
of the boundary torus $T$ into the preimage of the block $B_\pm(T)$.

\begin{lem}\label{TorsionSplit}
	\begin{enumerate}
	\item The short sequence of cellular complexes 
	\begin{align*}
	0 \rightarrow \bigoplus_{T \in \mathcal{T}} C(T \subset N, \mathfrak{e}|_T) \rightarrow \bigoplus_{X \in \mathcal{B}} C(X \subset N, \mathfrak{e}|_X) 
	\rightarrow C(N, \mathfrak{e}) \rightarrow 0
	\end{align*}
	is exact and splits. The first map is the sum $\oplus_T \phi_T$ and the second is induced by the inclusions.
	\item If $A \otimes_{\IZ[\pi_1]} C(T \subset N, \mathfrak{e}|_T)$ and $A \otimes_{\IZ[\pi_1]} C(B \subset N, \mathfrak{e}|_B)$ are all acyclic,
	then so is $A \otimes_{\IZ[\pi_1]} C(N, \mathfrak{e})$ and the equation
	\begin{align*}
		\tau(N,A) \prod_{T \in \mathcal{T}}\tau(T \subset N,A) = \prod_{X \in \mathcal{B}} \tau(X \subset N, A)
	\end{align*}
	holds in $W(A)$.
	\end{enumerate}
\end{lem}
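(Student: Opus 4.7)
The plan is to establish part~(1) directly from the cell structure and then deduce part~(2) from the standard multiplicativity of Reidemeister torsion for short exact sequences of based acyclic complexes. The main obstacle will be checking that the base-change correction in the multiplicativity formula reduces to a sign, so that it is killed in $\W{A}$.

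For part~(1), I would unpack the bases coming from the fundamental family $\mathfrak{e}$: every cell of $N$ is covered by a unique cell of $\mathfrak{e}$, a cell of a graph torus $T$ lies simultaneously in $T$, $B_+(T)$ and $B_-(T)$, and every non-torus cell lies in a unique block. Since $N$ has no self-pastings, $j_+ e$ and $j_- e$ sit in distinct summands of $\bigoplus_X C(X \subset N, \mathfrak{e}|_X)$. From this bookkeeping, injectivity of $\oplus_T \phi_T$ is immediate; its composition with the inclusion into $C(N, \mathfrak{e})$ vanishes because the two copies of $e$ cancel; surjectivity is clear from the cell decomposition; and any element mapping to zero in $C(N, \mathfrak{e})$ must have opposite coefficients on the two copies of each torus cell, exhibiting it as an image of $\oplus_T \phi_T$. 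A degree-wise splitting as $\IZ[\pi_1(N)]$-modules is obtained by sending each $j_+ e$ back to $e \in C(T \subset N, \mathfrak{e}|_T)$ and every other basis element to zero; this is not a chain map but splits the sequence as graded modules, which is all that part~(2) requires.

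For part~(2), I would tensor the sequence of part~(1) with $A$ over $\IZ[\pi_1(N)]$; exactness is preserved because the sequence splits as modules, and the long exact homology sequence forces $A \otimes C(N, \mathfrak{e})$ to be acyclic. Standard multiplicativity of torsion for based short exact sequences of acyclic complexes (as in Turaev's book) then yields the formula $\tau(N, A) \prod_T \tau(T \subset N, A) = \prod_X \tau(X \subset N, A)$, up to a base-change correction $\prod_i [b_i / b'_i \cup \tilde b''_i]^{(-1)^{i+1}}$ for a chosen lift $\tilde b''$ of the basis of $A \otimes C(N, \mathfrak{e})$. Choosing the lift that sends each torus cell $e$ to $j_+ e$, the transition between the natural block-basis $\{j_+ e, j_- e\}$ and the lifted basis $\{j_+ e - j_- e, j_+ e\}$ is an elementary matrix of determinant $\pm 1$ in each degree, while on non-torus cells the transition is the identity. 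Since $A$ is one-dimensional over $\IC(t)$, the tensored transition has the same determinant, and as $\W{A}$ is the quotient of $\IC(t)^*$ by $\pm \det g_A$, this sign vanishes and the claimed formula follows.
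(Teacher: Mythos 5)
Your proof is correct and follows essentially the same path as the paper: part~(1) by bookkeeping with the fundamental family (the paper phrases it as a Mayer--Vietoris observation about pairwise intersections, but the content is the same), and part~(2) by tensoring with $A$, using the long exact sequence for acyclicity, and invoking Turaev's multiplicativity of torsion for short exact sequences of based complexes, with the base-change determinant reducing to a unit of $\IZ$ that dies in $\W A$. The only cosmetic difference is the choice of lift: the paper picks, for each cell of $N$, the block with smallest index containing it, while you send a torus cell to $j_+ e$; both are integral unimodular transitions, so they give the same conclusion.
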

\begin{proof}
\begin{enumerate}
	\item The exactness can be obtained by noting that two distinct blocks intersect exactly 
	in a finitely many graph tori and three and more distinct blocks intersect in the empty set.
	The complex $C(N, \mathfrak{e})$ is free and therefore the short exact sequence splits. 
	\item We need to construct a lift of the basis $\mathfrak{e}$
	to $\bigoplus_{X \in \mathcal{B}} C(X \subset N, \mathfrak{e}|_X)$. 
	Enumerate the blocks 
	$\mathcal{B}_N = \{B_1, \ldots B_n\}$ arbitrarily
	and define 
	\begin{align*}
	r(e) := e \in C(B_k \subset N),
	\end{align*}
	where $k$ is the smallest $k$ such that $e$ is a cell of $B_k$.
	Note that the short exact sequence stays exact after 
	tensoring with $A$ as it splits.
	The corresponding long exact sequence shows the claim about the acyclicity 
	of $A \otimes_{\IZ[\pi_1]} C(N, \mathfrak{e})$.
	Torsion behaves well under short exact sequences given that the bases are compatible, 
	see \cite[Theorem 3.4]{Turaev01}.  
	We calculate the base change determinant
	\begin{align*}
		\left[ \bigcup_{T\in \mathcal{T}} e|_T \cup r(e_N) \middle/ \bigcup_{B \in \mathcal{B}_N} e|_B \right] = 1 \in W(A),
	\end{align*}
	which is $1$ as all the base change matrices are invertible and only have entries in $\IZ$. It follows from
	\cite[Theorem 3.4]{Turaev01} that we have the equality
	\begin{align*}
		\tau(N,A) \prod_{T \in \mathcal{T}}\tau(T \subset N,A) 
			= \prod_{B \in \mathcal{B}_N} \tau(B \subset N, A).
	\end{align*}
\end{enumerate}
\end{proof}

As the tori in $\mathcal{T}_N$ are incompressible, the fundamental
group $\pi_1(B)$ for every block $B \in \mathcal{B}_N$ injects into $\pi_1(N)$. 
Therefore a component of $p^{-1} (B) \subset \widetilde N$ is a universal
cover of $B$. Fix for every block $B$ a component $C_B$ in $p^{-1}(B)$ and
denote the inclusion by $j_B \colon C_B \subset p^{-1}(B)$. With the
projection of $p^{-1}(B)$ onto $C_B$ we obtain a fundamental family $\mathfrak{e}_B$
for $B$ from the fundamental family of $N$.

Recall that we have denoted the homology class of the Seifert fibres in a block $B$
by $\theta_B \in H_1(B; \IZ)$ and the block surface of $B$ by $\Sigma_B$.
\begin{lem}\label{TorsionBlock}
	The complex $A \otimes_{\IZ[\pi_1]} C(B \subset N, \mathfrak{e}|_B)$ is acyclic and its
	torsion is 
	\begin{align*}
		\tau(B \subset N, A) = \left(1- \alpha(\theta_B)t^{\sigma(\theta_B)} \right)^{-\chi(\Sigma_B)} \in W(A)
	\end{align*}
\end{lem}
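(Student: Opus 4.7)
The plan is to exploit the product structure $B \cong \Sigma_B \times S^1$ and apply a product formula for Reidemeister torsion of based chain complexes.

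First I would observe that because the graph tori are incompressible, $\pi_1(B) \hookrightarrow \pi_1(N)$ and the component $C_B$ of $p^{-1}(B)$ is a universal cover of $B$. The standard identification $C(B \subset N, \mathfrak{e}|_B) \cong \IZ[\pi_1(N)] \otimes_{\IZ[\pi_1(B)]} C(\widetilde B, \mathfrak{e}_B)$ then lets me replace $A \otimes_{\IZ[\pi_1]} C(B \subset N, \mathfrak{e}|_B)$ by $A \otimes_{\IZ[\pi_1(B)]} C(\widetilde B, \mathfrak{e}_B)$, where $A$ is restricted via $\rho_{\alpha \otimes \sigma}|_{\pi_1(B)}$.

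Next, I would choose a product CW-structure on $B = \Sigma_B \times S^1$ using some CW-structure on $\Sigma_B$ and the standard CW-structure on $S^1$ (one $0$-cell, one $1$-cell), along with a compatible fundamental family. Then $C(\widetilde B) \cong C(\widetilde \Sigma_B) \otimes_\IZ C(\widetilde{S^1})$ as $\IZ[\pi_1(\Sigma_B) \times \langle \theta_B \rangle]$-modules. Setting $z := \alpha(\theta_B)\, t^{\sigma(\theta_B)} \in \IC(t)^*$, the commutativity of $\IC(t)$ and the factorisation of $\rho_{\alpha\otimes \sigma}|_{\pi_1(B)}$ through the character on $\pi_1(\Sigma_B)$ and the character $\theta_B \mapsto z$ yield an isomorphism of based $\IC(t)$-complexes
\begin{align*}
A \otimes_{\IZ[\pi_1(B)]} C(\widetilde B, \mathfrak{e}_B) \;\cong\; C^{\Sigma}_* \otimes_{\IC(t)} C^{S^1}_*,
\end{align*}
where $C^{\Sigma}_* := \IC(t) \otimes_{\IZ[\pi_1(\Sigma_B)]} C(\widetilde \Sigma_B)$ and $C^{S^1}_* = \bigl[\IC(t) \xrightarrow{1-z} \IC(t)\bigr]$.

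The crucial arithmetic input is that $\alpha(\theta_B) \in \IC^*$ is a primitive $k$-th root of unity, since $\alpha(\theta_B) = 1 \in \ZkZ k$ under the embedding $\ZkZ k \hookrightarrow \IC^*$ and $k \geq 2$. Hence $1 - z \neq 0$ in $\IC(t)$, so $C^{S^1}_*$ is acyclic with torsion $(1-z)^{-1}$ in Turaev's convention, and the Künneth theorem over $\IC(t)$ forces $C^{\Sigma}_* \otimes_{\IC(t)} C^{S^1}_*$ to be acyclic as well. The product formula for torsion of a tensor product with an acyclic factor, which follows by induction on the length of $C^\Sigma_*$ using multiplicativity of torsion in short exact sequences \cite[Theorem 3.4]{Turaev01}, then yields
\begin{align*}
\tau\bigl(C^{\Sigma}_* \otimes C^{S^1}_*\bigr) \;=\; \tau(C^{S^1}_*)^{\chi(C^{\Sigma}_*)} \;=\; \bigl(1 - \alpha(\theta_B)\, t^{\sigma(\theta_B)}\bigr)^{-\chi(\Sigma_B)},
\end{align*}
since $\dim_{\IC(t)} C^{\Sigma}_i$ equals the number of $i$-cells of $\Sigma_B$ and hence $\chi(C^{\Sigma}_*) = \chi(\Sigma_B)$. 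The main technical obstacle is the careful bookkeeping of fundamental families and bases needed to validate the tensor factorisation as an isomorphism of based complexes up to the ambiguity in $W(A)$.
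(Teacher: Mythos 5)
Your proposal is correct and follows the same overall strategy as the paper: reduce from $C(B\subset N,\mathfrak{e}|_B)$ to $C(B,\mathfrak{e}_B)$ via the induced-module isomorphism (this step is identical in both), and then exploit the product structure $B\cong \Sigma_B\times S^1$. The difference is in the final step: the paper simply cites Turaev's formula for the torsion of a fibred $3$-manifold \cite[VII.5.2]{Turaev02} applied to the mapping torus with monodromy $\id_{\Sigma_B}$, whereas you give a direct computation via the tensor decomposition $C(\widetilde B)\cong C(\widetilde\Sigma_B)\otimes_\IZ C(\widetilde{S^1})$ and the rule $\tau(C\otimes C') = \tau(C')^{\chi(C)}$ for $C'$ acyclic. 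Since the monodromy here is trivial, the fibred-manifold formula degenerates exactly to your product formula, so the two routes are computationally equivalent; yours is slightly more self-contained (one does not need the full mapping-torus machinery and the formula $\det(\id - f_*t)$ on homology, only multiplicativity of torsion in short exact sequences). One point you should make explicit rather than defer to bookkeeping: switching to a product CW-structure on $B$, rather than the restriction of the fixed CW-structure on $N$, is licensed only because twisted Reidemeister torsion is a CW-invariant (the paper invokes this via Kirby--Siebenmann), so your computed value in the product structure really does equal $\tau(B\subset N,A)$ in $W(A)$. Modulo that remark your argument is complete and correct.
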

\begin{proof}
\begin{enumerate}
\item We first relate the chain complexes $C(B, \mathfrak{e}_B)$ with $C(B \subset N, \mathfrak{e}|_B)$.
Note that the following map is an isomorphism of chain complexes.
\begin{align*}
	\IZ[\pi_1(N)] \otimes_{\IZ[\pi_1(B)]} C(B, \mathfrak{e}_B) &\rightarrow C(B \subset N, \mathfrak{e}|_B)\\
	g \otimes e &\mapsto g \cdot (j_B)_* e
\end{align*}
The elements of the corresponding bases map to each other up to $\pi_1(N)$-translations.
But these are modded out when we go over to $W(A)$.
We get the following
\begin{align*}
	q( \tau(B, A\otimes_{\IZ[\pi_1(N)]} \IZ[\pi_1(B)])) = \tau(B \subset N, A) \in W(A),
\end{align*}
where $q$ denotes the quotient map  $q\colon W(A\otimes_{\IZ[\pi_1(N)]} \IZ[\pi_1(B)]) \rightarrow W(A)$.
\item In \cite[VII.5.2]{Turaev02} the twisted Reidemeister torsion of a fibred
manifold is calculated. Our manifold is fibred with monodromy $\id_{\Sigma}$. 
The upshot is that in our situation $A \otimes C(B, \mathfrak{e}_B)$ is acyclic 
if and only if $\alpha(\theta_B)t^{\sigma(\theta_B)} \neq 1$.
By definition of $\alpha \in H^1(N; \ZkZ k)$ we have $\alpha(\theta_B) = \exp{2\pi i/k} \in \IC$ and so 
$A\otimes \IZ[\pi_1(B)] \otimes C(B)$ is acyclic.
We obtain 
\begin{align*}
	\tau(B, A \otimes \IZ[\pi_1(B)]) 
	&= \frac{(1-\alpha(\theta_B)t^{\sigma(\theta_B)})^{\rk H_1(\Sigma; \IZ)}}
		{(1-\alpha(\theta_B)t^{\sigma(\theta_B)})^2}\\
	&= (1-\alpha(\theta_B)t^{\sigma(\theta_B)})^{-\chi(\Sigma)} \in W(A \otimes \IZ[\pi_1(B)])
\end{align*}
Consequently, we obtain the equality of the conclusion in the quotient $W(A)$.
\end{enumerate}
\end{proof}
\begin{prop}\label{TorsionProduct}
Let $\sigma \in H^1(N;\IZ)$ be a cohomology class.
\begin{enumerate}
\item The chain complex $\IC_{\alpha \otimes \sigma}(t) \otimes_{\IZ[\pi_1]} C(N, \mathfrak{e})$ is acyclic. 
\item The torsion equals
\begin{align*}
	\tau(N, \IC(t)_{\alpha\otimes \sigma}) = 
	\prod_{B \in \mathcal{B}_N}\left(1 -\alpha(\theta_B) t^{\sigma(\theta_B)} \right)^{-\chi(\Sigma_B)}.
\end{align*}
\end{enumerate}
\end{prop}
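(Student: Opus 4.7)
The plan is to apply Lemma~\ref{TorsionSplit} to split the torsion along the graph decomposition and then reduce to separate computations on the blocks and on the graph tori. Writing $A = \IC(t)_{\alpha\otimes\sigma}$ as in the section, the block contributions are already handled by Lemma~\ref{TorsionBlock}, which gives both acyclicity of $A\otimes_{\IZ[\pi_1]}C(B\subset N,\mathfrak{e}|_B)$ and the factor $(1-\alpha(\theta_B)t^{\sigma(\theta_B)})^{-\chi(\Sigma_B)}$. What remains is to verify that, for each torus $T\in\mathcal{T}_N$, the twisted chain complex $A\otimes_{\IZ[\pi_1]}C(T\subset N,\mathfrak{e}|_T)$ is acyclic and that its torsion equals $1\in W(A)$.

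For each $T\in\mathcal{T}_N$, incompressibility of $T$ makes the inclusion $\pi_1(T)\hookrightarrow\pi_1(N)$ injective. Exactly as in step (1) of the proof of Lemma~\ref{TorsionBlock}, the complex $A\otimes_{\IZ[\pi_1]}C(T\subset N,\mathfrak{e}|_T)$ then descends, in $W(A)$, to the twisted cellular complex of $T$ itself with respect to the restricted character $(\alpha\otimes\sigma)|_{\pi_1(T)}\colon\IZ^2\to\IC(t)^*$. The Seifert fiber $\gamma_+$ of the adjacent block $B_+(T)$ lies in $T$ and represents $\theta_{B_+(T)}$; by construction of $\alpha$ we have $\alpha(\theta_{B_+(T)})=\exp(2\pi i/k)\neq 1$, so this restricted character is non-trivial. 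A direct calculation on the minimal CW-structure of $T^2$ (two $1$-cells and one $2$-cell) shows that the twisted complex is then acyclic and its torsion is $1\in W(A)$, because the boundary determinants cancel in pairs.

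Plugging this into the product formula of Lemma~\ref{TorsionSplit} gives
\begin{align*}
\tau(N,A)\cdot\prod_{T\in\mathcal{T}_N}1 \;=\; \prod_{B\in\mathcal{B}_N}\tau(B\subset N,A) \;=\; \prod_{B\in\mathcal{B}_N}\bigl(1-\alpha(\theta_B)t^{\sigma(\theta_B)}\bigr)^{-\chi(\Sigma_B)},
\end{align*}
which yields both (1) and (2) simultaneously. I expect the only real obstacle to be bookkeeping: namely, confirming that the induction-restriction identification used to pass from $W(A\otimes\IZ[\pi_1(T)])$ down to $W(A)$ is compatible with the bases chosen in Lemma~\ref{TorsionSplit}, and that the base-change determinant appearing in step (2) of that lemma remains trivial when the torus contributions are factored in. Since all intermediate base changes have integer entries, these checks mirror those already carried out in Lemma~\ref{TorsionBlock}, so no genuinely new difficulty arises.
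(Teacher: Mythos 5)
Your proposal follows exactly the paper's argument: apply Lemma~\ref{TorsionSplit} to split the torsion over blocks and tori, use Lemma~\ref{TorsionBlock} for the block contributions, and show each torus contributes trivially because the restricted twist is non-trivial (which you correctly trace to $\alpha(\theta_{B_+(T)})\neq 1$). The only difference is that the paper cites Turaev for the torus acyclicity and the identity $\tau(T\subset N,A)=1\in W(A)$, whereas you sketch the direct computation on the standard CW-structure of $T^2$ (which also has a $0$-cell in addition to the two $1$-cells and one $2$-cell you list, though this does not affect the argument).
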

\begin{proof}
Let $\sigma \in H^1(M;\IZ)$ be arbitrary.
\begin{enumerate}
\item We have seen that for each block $B$ the complex 
$\IC(t)_{\alpha\otimes \sigma} \otimes_{\IZ[\pi_1]} C(B \subset N, \mathfrak{e}|_B)$
is acyclic. 
A similar consideration shows that $\IC(t)_{\alpha\otimes \sigma}\otimes_{\IZ[\pi_1]} C(T \subset N, \mathfrak{e}|_T)$ 
is acyclic, see \cite[Lemma 11.11]{Turaev01}. 
By Lemma \ref{TorsionSplit} the chain complex $A \otimes_{\IZ[\pi_1]} C(N, \mathfrak{e})$ is acyclic as well.
\item In \cite[Lemma 11.11]{Turaev01} the torsion of a torus is calculated, i.e.
\begin{align*}
	\prod_{T \in \mathcal{T}}\tau(T \subset N,\IC(t)_{\alpha\otimes \sigma}) = 1.
\end{align*}
By Lemma \ref{TorsionSplit} we get
\begin{align*}
	\tau(N,\IC(t)_{\alpha\otimes \sigma}) 
	\prod_{T \in \mathcal{T}_N}\tau(T \subset N,\IC(t)_{\alpha\otimes \sigma}) 
	&= \prod_{B \in \mathcal{B}_N} \tau(B \subset N, \IC(t)_{\alpha\otimes \sigma})
\end{align*}
and consequently deduce
\begin{align*}
	\tau(N, \IC(t)_{\alpha\otimes \sigma}) = 
	\prod_{B \in \mathcal{B}_N}\left(1 -\alpha(\theta_B) t^{\sigma(\theta_B)} \right)^{-\chi(\Sigma_B)}.
\end{align*}
\end{enumerate}
\end{proof}

\subsection{The Thurston and the torsion norm}
In this section, first we give the definition of the torsion norm, and
then we prove that every graph manifold~$M$ with $b_2(M) \geq 2$
admits a finite cover such that the Thurston norm 
and the torsion norm agree.

In this section, we assume that the $3$-manifold~$M$ has $b_2(M) \geq 2$.
Turaev \cite{Turaev76} introduced the maximal abelian torsion. 
In the case of $b_2(M) \geq 2$, it is an element $\Delta_M$ of the group ring $\IZ[H_1(M;\IZ)]$, which
is an invariant of the pair $(M, \mathfrak{e})$.
Our main use of this invariant is that it encodes the Seiberg-Witten basic
classes of $M$. Its construction can be found in \cite[Section 13]{Turaev01}.
The proposition below illustrates the close relation between the maximal abelian torsion
and the twisted Reidemeister torsion.

Recall the ring homomorphism~$\rho_{\alpha\otimes \sigma}$ from Construction~\ref{Constr:Twist}.
\begin{align*}
\Map { \rho_{\alpha\otimes \sigma} } {\IZ[\pi_1]&} {\IC(t)}\\
g &\mapsto \alpha(g) t^{\sigma(g)}
\end{align*}

\begin{prop}\label{Prop:MAbelian}
Let $\mathfrak{e}$ be a fundamental family for $M$. 
Denote by $\Delta_M \in \IZ[H_1(M;\IZ)]$ the maximal abelian torsion with
respect to $\mathfrak{e}$. Let $\alpha \in \Hom(\pi_1, \ZkZ k)$ be a character
and $\sigma \in H^1(M; \IZ)$ be a cohomology class.
Assume the chain complex 
$\IC(t)_{\alpha\otimes \sigma} \otimes_{\IZ[\pi_1]} C(M, \mathfrak{e})$ is acyclic.
Then the following equality holds:
\begin{align*}
\rho_{\alpha\otimes \sigma} (\Delta_M) = \tau(M,\IC(t)_{\alpha\otimes \sigma}) \in \W {\IC(t)_{\alpha\otimes \sigma}}.
\end{align*}
\end{prop}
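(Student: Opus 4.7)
The plan is to reduce the statement to a naturality property of Reidemeister torsion under a change of ring. Both the character $\alpha \colon \pi_1 \to \ZkZ k$ and the cohomology class $\sigma \colon \pi_1 \to \IZ$ factor through the abelianisation, so the homomorphism $\rho_{\alpha \otimes \sigma}$ factors as $\IZ[\pi_1] \twoheadrightarrow \IZ[H_1] \xrightarrow{\bar\rho} \IC(t)$, where $H_1 := H_1(M;\IZ)$. Writing $C^{ab} := \IZ[H_1] \otimes_{\IZ[\pi_1]} C(M,\mathfrak{e})$ for the induced based chain complex over $\IZ[H_1]$, one obtains a canonical identification $\IC(t)_{\alpha \otimes \sigma} \otimes_{\IZ[\pi_1]} C(M,\mathfrak{e}) = \IC(t) \otimes_{\IZ[H_1]} C^{ab}$ of based complexes, under which $\tau(M, \IC(t)_{\alpha\otimes\sigma})$ becomes the torsion of the right-hand side.

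Next, I would recall Turaev's construction of $\Delta_M$ from \cite[Section 13]{Turaev01}. By definition, $\Delta_M$ is an integral representative of the Reidemeister torsion of $C^{ab}$ computed over the total ring of fractions $Q := Q(\IZ[H_1])$: the assumption $b_2(M) \geq 2$ guarantees that $Q \otimes_{\IZ[H_1]} C^{ab}$ is acyclic and that its torsion $\tau(Q \otimes C^{ab}) \in \W{Q}$ admits a preferred lift $\Delta_M \in \IZ[H_1]$, well-defined up to multiplication by an element of $\pm H_1$.

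The heart of the proof is a naturality argument for Reidemeister torsion with respect to the ring change $\bar\rho$. The main obstacle is that $\bar\rho$ need not extend to a ring homomorphism $Q \to \IC(t)$, because it may send non-zero divisors of $\IZ[H_1]$ to zero. To circumvent this, I would work with an explicit determinantal formula for $\Delta_M$: choosing preferred bases, $\Delta_M$ can be written as an alternating product of determinants of square minors of the boundary matrices of $C^{ab}$, in the spirit of the calculations in \cite{Turaev01}. The acyclicity hypothesis that $\IC(t) \otimes_{\IZ[H_1]} C^{ab}$ is acyclic is precisely the condition that $\bar\rho$ maps each of these minors to a nonzero, hence invertible, element of $\IC(t)$. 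Applying $\bar\rho$ to the formula then produces the analogous expression computing $\tau(\IC(t) \otimes_{\IZ[H_1]} C^{ab})$, so $\bar\rho(\Delta_M) = \tau(M, \IC(t)_{\alpha\otimes\sigma})$; the passage to $\W{\IC(t)_{\alpha\otimes\sigma}}$ absorbs the $\pm H_1$ indeterminacy inherent in the definition of $\Delta_M$.
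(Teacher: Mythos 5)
The paper's proof is a one-line citation to \cite[Theorem 13.3]{Turaev01}; you attempt to reconstruct Turaev's argument, which is a legitimate thing to do, but the sketch has a genuine gap at the crucial step. You assert that the acyclicity of $\IC(t)_{\alpha\otimes\sigma}\otimes_{\IZ[\pi_1]} C(M,\mathfrak{e})$ ``is precisely the condition that $\bar\rho$ maps each of these minors to a nonzero, hence invertible, element of $\IC(t)$.'' The two directions of that equivalence are not symmetric. If the particular minors you fixed all map to units, then indeed the $\IC(t)$-complex is acyclic and the computation transports; but the converse, which is what you actually need, fails. Acyclicity over $\IC(t)$ only guarantees that \emph{some} matrix chain over $\IC(t)$ has invertible minors, not that the specific chain you chose beforehand to write $\Delta_M$ over $Q(\IZ[H_1])$ does. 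A nonzero (even non-zero-divisor) element of $\IZ[H_1]$ can lie in the kernel of $\bar\rho$, so applying $\bar\rho$ term-by-term to your preferred determinantal formula for $\Delta_M$ may produce an expression of the form $0/0$ rather than the torsion.

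The standard way to close this gap is to reverse the order of choices: use acyclicity over $\IC(t)$ to produce a matrix chain whose $\bar\rho$-images of minors are nonzero; observe that this chain is then automatically admissible over $Q(\IZ[H_1])$ (with some care in the presence of torsion in $H_1(M;\IZ)$, where $Q(\IZ[H_1])$ is only a finite product of fields and one must track the idempotent through which $\bar\rho$ factors); compute $\Delta_M$ using \emph{that} chain; and finally invoke the independence of torsion from the choice of matrix chain to identify this with the maximal abelian torsion. Writing this out carefully is exactly the content of \cite[Theorem 13.3]{Turaev01}, which the paper cites. So either keep the citation, or restructure your determinantal argument so the choice of minors is driven by the acyclicity over $\IC(t)$ rather than fixed in advance.
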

\begin{proof}
See \cite[Theorem 13.3]{Turaev01}.
\end{proof}

With the help of the maximal abelian torsion $\Delta \in \IZ[H_1(M;\IZ)]$ 
we define a norm on $H_1(M; \IZ)$ below. A variant, the Alexander norm, of this seminorm has been
investigated and compared with the Thurston norm in \cite{McMullen02}.
\begin{defn}\label{defn:TorsionNorm}
Expand the maximal abelian torsion in the homology variables
$\Delta_M = \sum_h a_h h \in \IZ[H_1(M;\IZ)]$.
Define the \emph{torsion norm} $\TorN{.}$ on $H^1(M; \IZ)$ by
\begin{align*}
	\TorN \sigma := \max \left\{ \sigma (h) - \sigma( h' ) : h,h' \in H_1(M; \IZ) \text{ with } a_h, a_{h'} \neq 0  \right\}
\end{align*}
\end{defn}
\begin{rem}
\begin{enumerate}
\item If one switches from one fundamental family to another, 
then the maximal abelian torsion 
gets multiplied by $\pm h$ for a suitable $h \in H_1(N; \IZ)$, see \cite[Section II.13 p. 65]{Turaev01}.
Although the maximal abelian torsion does depend on the choice of fundamental 
family, the torsion norm does not.
\item Above we use the convention that the above maximum of the empty set
is 0.
\end{enumerate}
\end{rem}

It is well known that the twisted Reidemeister torsion gives a lower bound on the Thurston
norm, see \cite[Theorem 1.1]{FriedlKim06}, and \cite[Theorem 1.2]{Friedl13} for a purely torsion theoretic argument.
\begin{prop}\label{Comparison}
Let $\sigma \in H^1(M; \IZ)$ be a cohomology class. Denote by $\Delta_M$ the maximal abelian torsion of $M$.
Then
\begin{align*}
	\Thur \sigma \geq \TorN \sigma \geq \width \rho_{\alpha \otimes \sigma}(\Delta)
\end{align*}
\end{prop}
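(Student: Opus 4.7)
The natural plan is to prove the chain of inequalities by handling each one separately. The second inequality, $\TorN \sigma \geq \width \rho_{\alpha \otimes \sigma}(\Delta_M)$, is essentially formal and follows from the definitions, while the first inequality, $\Thur \sigma \geq \TorN \sigma$, is the substantial classical content and will be obtained by invoking the cited results of Friedl--Kim and Friedl.

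For the elementary lower inequality I would expand $\Delta_M = \sum_h a_h h$ as an element of $\IZ[H_1(M;\IZ)]$ and apply $\rho_{\alpha \otimes \sigma}$ termwise, giving
\begin{align*}
\rho_{\alpha \otimes \sigma}(\Delta_M) = \sum_{h} a_h \alpha(h)\, t^{\sigma(h)} = \sum_n \Bigl(\sum_{h\colon \sigma(h)=n} a_h \alpha(h)\Bigr) t^n.
\end{align*}
The support of this Laurent polynomial in $t$ is contained in $\{\sigma(h) : a_h \neq 0\}$, so the highest power of $t$ appearing is at most $\max\{\sigma(h) : a_h \neq 0\}$ and the lowest is at least $\min\{\sigma(h) : a_h \neq 0\}$. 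Subtracting yields exactly the bound $\width \rho_{\alpha \otimes \sigma}(\Delta_M) \leq \TorN \sigma$. Possible cancellation in the coefficient of $t^n$ can only shrink the support further, so the inequality goes the right way and no finer analysis is needed.

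For the harder inequality $\Thur \sigma \geq \TorN \sigma$, my plan is to appeal directly to \cite[Theorem 1.1]{FriedlKim06} and \cite[Theorem 1.2]{Friedl13}, as already flagged in the paragraph preceding the statement. Both references establish that the width of the order of the Alexander module (equivalently, the torsion norm computed from the maximal abelian torsion under the abelianisation representation) bounds the Thurston norm from below. The main obstacle here will be a pure bookkeeping one: one must check that the torsion norm defined above via $\Delta_M$ coincides with the quantity estimated in the cited papers, which is phrased in terms of the first elementary ideal/Alexander polynomial of $H_1$ of the universal abelian cover. Since $b_1(M) \geq 2$ ensures that $\Delta_M$ really is an element of $\IZ[H_1(M;\IZ)]$ rather than lying in some localisation, the comparison between the two conventions is a routine unravelling of Turaev's definitions in \cite[Chapter II]{Turaev01}.

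Chaining the two inequalities then gives the proposition. No new geometric input beyond the cited theorems is required; the only real work is the polynomial-level manipulation carried out above and the compatibility check between conventions.
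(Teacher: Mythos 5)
Your treatment of the second inequality $\TorN \sigma \geq \width \rho_{\alpha \otimes \sigma}(\Delta_M)$ is correct and is exactly the calculation in the paper: expand $\Delta_M$, apply $\rho_{\alpha\otimes\sigma}$ termwise, and observe that the support of the resulting Laurent polynomial can only shrink under cancellation.

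Your plan for the first inequality, however, has a genuine gap. You propose to get $\Thur \sigma \geq \TorN \sigma$ from \cite[Theorem 1.1]{FriedlKim06} and \cite[Theorem 1.2]{Friedl13}, on the grounds that these bound the Thurston norm below by ``the width of the order of the Alexander module (equivalently, the torsion norm \dots)''. That parenthetical equivalence is false, and the discrepancy is precisely what the second inequality is recording. The Friedl--Kim and Friedl results bound $\Thur \sigma$ from below by $\width \tau(M, A)$ for a given twisting representation $A$ --- i.e.\ by $\width \rho_{\alpha\otimes\sigma}(\Delta_M)$ after specialising. Because of possible cancellations among the coefficients $a_h$ with a common value of $\sigma(h)$, that width can be strictly smaller than $\TorN \sigma$, which is defined directly from the support of $\Delta_M$ in $\IZ[H_1(M;\IZ)]$ before any ring homomorphism is applied. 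So those references give you the composite inequality $\Thur \sigma \geq \width \rho_{\alpha\otimes\sigma}(\Delta_M)$, but not the middle term $\TorN \sigma$; dismissing the mismatch as ``routine unravelling of conventions'' papers over exactly the point at issue. The correct reference for $\Thur \sigma \geq \TorN \sigma$ is Turaev's theorem on the torsion norm, \cite[Theorem IV.2.2]{Turaev02} (the refinement of McMullen's Alexander-norm inequality), which is what the paper invokes. Note also that the middle term is actually used downstream in Proposition \ref{GraphThurstonTorsion}: one shows $\width \rho_{\alpha\otimes\sigma}(\Delta_M) = \Thur \sigma$ and then squeezes to conclude $\TorN \sigma = \Thur \sigma$, which requires both separate inequalities and not merely their composite.
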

\begin{proof}
See \cite[Theorem IV.2.2]{Turaev02} for a prove of the first inequality.
The second inequality can be deduced as follows. Writing $\Delta = \sum_h a_h h$ 
	we obtain the inequality
	\begin{align*}
		\width \rho_{\alpha \otimes \sigma} (\Delta) &= \width \sum_h a_h \alpha(h) t^{\sigma(h)}\\
		&=\width \sum_k \left(\sum_{\sigma(h) = k} a_h \alpha(h) \right) t^k\\
		&\leq \max \left\{\sigma (h) - \sigma( h' ) : h \in H_1(M; \IZ) \text{ with } a_h, a_{h'} \neq 0  \right\}.
	\end{align*}
\end{proof}
\begin{prop}\label{GraphThurstonTorsion}
Let $N$ be virtually of composite type. Then there exists a
finite cover $\pi \colon M \rightarrow N$ 
such that Thurston norm agrees with the torsion norm on $N$.
\end{prop}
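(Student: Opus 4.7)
The plan is to reduce the statement to the situation in which all of the structural results of the preceding sections apply, and then to show that the widths predicted by Proposition~\ref{TorsionProduct} match the Thurston norm block by block.

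First I would pass to a finite cover $M_0 \rightarrow N$ so that $M_0$ itself is of composite type, which is possible by assumption. Applying Lemma~\ref{Selfpastings} and then Lemma~\ref{PosCover} to $M_0$, I may further assume that $M_0$ has a composite graph structure with no self-pastings and with all block surfaces $\Sigma_B$ of positive genus. Fix any integer $k\geq 2$ coprime to every fibre-intersection number of $M_0$. Then Theorem~\ref{GraphRes} supplies a further cover $M \rightarrow M_0$, a composite graph structure on $M$, and a class $\alpha \in H^1(M;\ZkZ{k})$ satisfying $\langle \alpha,\theta_B\rangle = 1$ for every block $B \in \mathcal{B}_M$. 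Since each block in $M$ is a product of a positive-genus surface with $S^1$, we automatically have $b_1(M)\geq 2$ so the torsion norm is defined.

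Now fix an arbitrary class $\sigma \in H^1(M;\IZ)$. By Proposition~\ref{TorsionProduct} the chain complex $\IC(t)_{\alpha\otimes \sigma}\otimes_{\IZ[\pi_1]} C(M,\mathfrak{e})$ is acyclic, and
\begin{align*}
\tau(M,\IC(t)_{\alpha\otimes \sigma}) = \prod_{B \in \mathcal{B}_M}\bigl(1 - \alpha(\theta_B)t^{\sigma(\theta_B)}\bigr)^{-\chi(\Sigma_B)}.
\end{align*}
Since $\alpha(\theta_B) = \exp(2\pi i/k) \neq 1$, the factor $1-\alpha(\theta_B)t^{\sigma(\theta_B)}$ is a nonzero Laurent polynomial of width $|\sigma(\theta_B)|$ (the case $\sigma(\theta_B)=0$ gives a nonzero constant, hence width $0$). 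Because $-\chi(\Sigma_B) > 0$, the additivity of $\width$ gives
\begin{align*}
\width \tau(M,\IC(t)_{\alpha\otimes \sigma}) = \sum_{B \in \mathcal{B}_M} -\chi(\Sigma_B)\cdot |\sigma(\theta_B)|.
\end{align*}

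On the other hand, the tori of the composite graph structure are incompressible, so Proposition~\ref{ThurstonSplit} (Eisenbud-Neumann) splits the Thurston norm along the blocks; since each block is a product $\Sigma_B \times S^1$ with $\chi(\Sigma_B)<0$, Proposition~\ref{ThurstonProduct} evaluates each summand to $-\chi(\Sigma_B)\cdot |\sigma(\theta_B)|$. Therefore $\Thur\sigma$ equals the width computed above. Combining this with Proposition~\ref{Prop:MAbelian} (which identifies $\tau(M,\IC(t)_{\alpha\otimes\sigma})$ with $\rho_{\alpha\otimes\sigma}(\Delta_M)$) and the chain of inequalities in Proposition~\ref{Comparison}
\begin{align*}
\Thur\sigma \;\geq\; \TorN\sigma \;\geq\; \width \rho_{\alpha\otimes\sigma}(\Delta_M) \;=\; \Thur\sigma,
\end{align*}
forces equality throughout, which is the desired conclusion. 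The main subtlety, already isolated in the preceding sections, is constructing the character $\alpha$ that simultaneously evaluates to a nontrivial root of unity on every fibre class; this ensures that no block factor degenerates and that the computation works uniformly in $\sigma$, so a single cover $M \rightarrow N$ witnesses equality of the two norms on all of $H^1(M;\IZ)$.
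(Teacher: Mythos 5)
Your proof is correct and follows essentially the same route as the paper's: pass to a cover as prepared by Theorem~\ref{GraphRes} (you explicitly unfold the intermediate applications of Lemma~\ref{Selfpastings} and Lemma~\ref{PosCover}, which Theorem~\ref{GraphRes} already performs internally), compute the width of the torsion via Proposition~\ref{TorsionProduct}, match it block-by-block with the Thurston norm via Propositions~\ref{ThurstonSplit} and~\ref{ThurstonProduct}, and sandwich with Propositions~\ref{Prop:MAbelian} and~\ref{Comparison}. One small remark: you assert that $b_1(M)\geq 2$ follows automatically from each block being a positive-genus surface times $S^1$; this is in fact true (a Mayer--Vietoris count gives $b_1(M)\geq \sum_j 2g_j + k - n + 1 \geq 2$ in the non-Seifert case, and $b_1 = 2g+1\geq 5$ if there are no graph tori), but the paper instead simply inherits the standing hypothesis $b_1\geq 2$ from Theorem~\ref{AlexanderThurston} together with the fact that $b_1$ cannot decrease under finite covers, so if you keep your claim it would be worth a sentence of justification.
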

\begin{proof}
Pick a cover $\pi \colon M \rightarrow N$ such that $M$ fits
in the setting of Section~\ref{CompositeFolds}. We denote the set
of blocks by $\mathcal{B}_M$ and the homology class of the Seifert fibre in each block~$B \cong \Sigma_B \times S^1$
by $\theta_B$. Let $\sigma \in H^1(M;\IZ)$ be arbitrary. 
We will show $\Thur \sigma = \TorN \sigma$.
Again we fix a cohomology class $\alpha \in H^1(N; \ZkZ k)$
for a natural number $k \geq 2$ which evaluates on every Seifert fibre in every block $B$ 
to $\alpha(\theta_B) = 1$.
We consider the module $\IC(t)_{\alpha \otimes \sigma}$ from Construction~\ref{Constr:Twist}.

From Proposition~\ref{TorsionProduct} 
we know that
\begin{align*}
	\tau(M, \IC(t)_{\alpha\otimes \sigma}) = 
	\prod_{B \in \mathcal{B}_M}\left(1 -\alpha(\theta_B) t^{\sigma(\theta_B)} \right)^{-\chi(\Sigma_B)}.
\end{align*}
Applying the function $\width$ we obtain
\begin{align*}
\width \tau(M, \IC(t)_{\alpha\otimes \sigma}) 
&= \sum_{B \in \mathcal{B}_M} \width \left(1 - \alpha(\theta_B) t^{\sigma(\theta_B)}\right)^{-\chi(\Sigma_B)}\\
&= \sum_{B \in \mathcal{B}_M} -\chi(\Sigma_B) \width \left(1 - \alpha(\theta_B) t^{\sigma(\theta_B)}\right).
\end{align*}
We have the equality: $\width \left( 1 - \alpha([\theta_B]) t^{\sigma([\theta_B])} \right) = | \sigma([\theta_B]) |$. 
Thus by Proposition \ref{ThurstonSplit} and \ref{ThurstonProduct}
we obtain the equality
\begin{align*}
\width \tau(M, \IC(t)_{\alpha\otimes \sigma}) = \sum_{B \in \mathcal{B}} \Thur {i_B^* \sigma}
= \Thur \sigma
\end{align*}
With Propositions \ref{Prop:MAbelian} and \ref{Comparison} and the equality above, we obtain
\begin{align*}
\Thur \sigma = \TorN \sigma.
\end{align*}
\end{proof}
\begin{rem}\label{MaxAbNonVan}
Proposition \ref{TorsionProduct}
combined with Proposition~\ref{Prop:MAbelian} and \ref{Comparison} shows that the
maximal abelian torsion of $N$ has to be non-zero.
\end{rem}

\begin{thm}\label{AlexanderThurston}
Let $N$ be a graph manifold with $b_1(N) \geq 2$. 
Then there is a finite cover
$p\colon M \rightarrow N$ such that the torsion norm and
the Thurston norm coincide on $M$.
\end{thm}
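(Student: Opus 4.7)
The plan is to run the case analysis provided by Lemma~\ref{Lem:GraphTypes}: after passing to a finite cover, $N$ falls into one of the four types---spherical, torus, circle, or composite. The hypothesis $b_1(N) \geq 2$ is preserved under finite covers (and only grows), so the spherical case is immediately ruled out: a virtually spherical $3$-manifold has finite fundamental group and hence $b_1 = 0$. The composite case is dispatched directly by Proposition~\ref{GraphThurstonTorsion}, which furnishes a further finite cover in which $\Thur\sigma = \TorN\sigma$ for every $\sigma$.

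It remains to handle the circle and torus type cases. My plan is to show that in both types both norms vanish identically, so the equality holds trivially. For the Thurston norm: if $M$ is a torus bundle over $S^1$, any class $\sigma \in H^1(M;\IZ)$ can be represented by a smooth map $M \to S^1$ factoring (rationally) through either the fibre projection or the bundle projection, and in both situations a regular preimage is a disjoint union of tori, whence $\Thur\sigma = 0$. Similarly, if $M$ is a non-trivial oriented circle bundle $\pi\colon M \to \Sigma$ over a surface of non-positive Euler characteristic, then rationally $H^1(M;\IQ) = \pi^\ast H^1(\Sigma;\IQ)$ since the Gysin sequence kills the fibre class; each $\sigma \in H^1(M;\IZ)$ is thus (rationally) pulled back from $\Sigma$ and represented by $\pi^{-1}$ of a curve in $\Sigma$, which is a union of tori, giving $\Thur\sigma = 0$.

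For the torsion norm in these same two cases, my plan is to use the formula for the Reidemeister torsion of a Seifert fibred manifold, as developed in \cite[Chapter~XI]{Turaev01}. After first using Lemma~\ref{SeifertRes} to pass to a characteristic cover in which $M$ is an honest circle bundle (or, in the torus type case, a genuine $T^2$-bundle with simple enough monodromy), one can write the maximal abelian torsion $\Delta_M \in \IZ[H_1(M;\IZ)]$ explicitly. The key observation is that its support is contained in the preimage of a single class under the quotient $H_1(M;\IZ) \to H_1(M;\IZ)/\langle \theta \rangle$, where $\theta$ is the fibre class; since every $\sigma \in H^1(M;\IZ)$ vanishes on $\theta$ (rationally, by the Gysin argument above), $\sigma$ is constant on the support of $\Delta_M$, and hence $\TorN\sigma = 0$ by Definition~\ref{defn:TorsionNorm}.

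The main obstacle I anticipate is the bookkeeping needed to identify the support of $\Delta_M$ explicitly in the circle and torus type cases, since the multiplicative structure of the group ring interacts delicately with the torsion formulas for Seifert manifolds. Fortunately, the cover provided by Lemma~\ref{SeifertRes} eliminates exceptional fibres and reduces $M$ to a clean $S^1$- or $T^2$-bundle, after which the computation of $\Delta_M$ reduces to that of a product bundle and the vanishing of $\TorN$ follows from a direct inspection.
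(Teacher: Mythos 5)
Your overall architecture matches the paper's: run the case analysis of Lemma~\ref{Lem:GraphTypes}, rule out the spherical case via $b_1$, invoke Proposition~\ref{GraphThurstonTorsion} for composite type, and handle circle and torus types by showing the two norms vanish. The composite and spherical steps are identical to the paper.

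Where you diverge is the circle and torus types, and here you are doing substantially more work than needed. You propose to verify $\TorN\sigma = 0$ by explicitly computing the maximal abelian torsion $\Delta_M$ of a (possibly further-covered) circle or torus bundle, identifying its support, and checking $\sigma$ is constant on it. This is not wrong in principle, but it is superfluous: by Proposition~\ref{Comparison} we already have $\Thur\sigma \geq \TorN\sigma$ for every $\sigma$, and $\TorN\sigma \geq 0$ directly from Definition~\ref{defn:TorsionNorm} (either the support is non-empty, in which case taking $h = h'$ gives $0$ as a competitor for the max, or $\Delta_M = 0$ and the empty-max convention gives $0$). Once you know $\Thur\sigma = 0$, you are done by the squeeze $0 \leq \TorN\sigma \leq \Thur\sigma = 0$. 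This is precisely the paper's one-line observation, and it sidesteps the entire torsion computation and the "bookkeeping" you flag as your main anticipated obstacle.

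There is also a genuine gap in your Thurston-norm argument for the torus bundle case. You assert that every $\sigma \in H^1(M;\IZ)$ factors rationally through "either the fibre projection or the bundle projection." For a mapping torus $\MapT(T^2,\phi)$ there is no global fibre projection (it does not descend unless the monodromy commutes with it), and not every class in $H^1(M;\IQ)$ pulls back from the base $S^1$; the Wang sequence shows $H^1(M;\IQ)$ has additional contributions coming from $\ker(\phi_\ast - \id)$ on $H_1(T^2;\IQ)$. So the dichotomy you propose does not cover all of $H^1$. The paper handles this by showing that every $\alpha \in \ker(\phi_\ast - \id)$ is realised as $\partial[T_\gamma]$ for an embedded \emph{horizontal} torus $T_\gamma \subset M$; combining this with multiples of the fibre class and the reverse triangle inequality reduces everything to classes with $\partial\sigma' = 0$, which are multiples of the fibre and hence have zero norm. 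You should replace your "fibre/bundle projection" dichotomy with this argument, or at minimum make precise how classes in the Wang kernel are represented by tori.

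Your circle-type argument (Gysin sequence implies every class is rationally a pullback, whose Poincaré dual is a union of vertical tori) is fine and is equivalent to the paper's citation of McMullen \cite[Example~7.3]{McMullen02}.
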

\begin{proof}
We deal with the types of Definition \ref{Defn:GraphTypes} one by one:
\begin{description}
\item[Spherical type] Spherical manifolds cannot have $b_1(N) \geq 2$.
\item[Circle type] In this case $N$ is covered by a non-trivial circle bundle $E$. 
Thus by \cite[Example 7.3]{McMullen02} the Thurston vanishes. 
As a consequence also the torsion norm has to vanish.
\item[Torus type] 
Again we will prove that $\Thur .$ vanishes identically.
Recall that a mapping torus $\MapT(T^2, \phi)$ fibres over $S^1$. 
We denote the corresponding map
by $\Map q {\MapT(T^2, \phi)} {S^1}$. Fix a fibre $T^2 \subset \MapT(T^2, \phi)$
and denote its inclusion by $\Map j {T^2} {\MapT(T^2, \phi)}$.
The sequence below is exact.
\begin{align*}
	\ldots 	\overset{\partial}{\rightarrow} H_k(T^2;\IQ) 
	\overset{\phi_* - \id}{\longrightarrow} H_k(T^2;\IQ) 
	\overset{j_*}{\rightarrow} H_k(\MapT(T^2, \phi);\IQ) 
	\overset{\partial}{\rightarrow} H_{k-1}(T^2;\IQ) \rightarrow \ldots
\end{align*}
It can be derived from the Mayer-Vietoris sequence of a suitable cover of the mapping torus.
For every element $\alpha \in H_1(T^2;\IZ)$ we can find an embedded loop 
$\gamma \subset T^2$ with $[\gamma] = \alpha$. If $\alpha$ is in the kernel
of $\phi_* - \id$, then $\phi(\gamma)$ is isotopic to $\gamma$ itself.
Because of this we can find an horizontal embedded torus  $T_\gamma \subset \MapT(T^2, \phi)$ such 
that $\partial [T_\gamma] = \alpha$.

Adding a multiple of the fundamental class of a suitable torus and using the reverse 
triangle inequality, this shows that for every class 
$\sigma \in H_2\left( \MapT(T^2, \id); \IQ\right)$
we can find a class $\sigma'$ with $\Thur \sigma = \Thur {\sigma'}$ and
$\partial \sigma' = 0$. This implies that $\sigma'$ is a multiple of the fundamental class
of the torus fibre. Therefore the Thurston norm $\Thur {\sigma'} = 0$ vanishes.
\item[Composite type] This is Proposition~\ref{GraphThurstonTorsion}.
\end{description}
\end{proof}
\section{Embedded surfaces in circle bundles}
We define a complexity for elements of $H_2(W;\IZ)$ in a $4$-manifold $W$. 
Similar to the Thurston norm, it measures the minimal 
complexity of an embedded surfaces representing the given class.
\begin{defn}\label{Def:Complexity}
Let $W$ be a $4$-manifold. The \emph{complexity} of a class $\sigma \in H_2(W;\IZ)$
is
\begin{align*}
\Comp(\sigma) := \min \{ \chi_-(\Sigma) : 
	\Sigma \text{ a properly embedded surface with } \sigma = [\Sigma]\},
\end{align*}
where $\chi_-(\Sigma)$ is defined in Definition \ref{SurfacesComplexity}.
\end{defn}

We use the theory of Seiberg-Witten invariants and the adjunction inequality
to obtain estimates for the complexity of embedded surfaces. 
Denote the collection of $\Spinc$-structures on a $4$-manifold $W$ by $\Spinc(W)$.
We will only apply the Seiberg-Witten theory in the case $b_2^+(W) \geq 2$. 
The Seiberg-Witten invariant, introduced by Witten \cite{Witten94}, is a function
$\Map {SW_W} {\Spinc(W)} \IZ$. It is non-zero only on finitely many $\Spinc$-structures.
Details on the construction can be found in \cite{Morgan96}. 
Associated with each element $\xi \in \Spinc(W)$ is the \emph{determinant line bundle},
see \cite[Section 3.1]{Morgan96}. The Chern class of this line bundle
is denoted by $c_1(\xi) \in H^2(W;\IZ)$.
The collection of Seiberg-Witten \emph{basic classes} is
\[ \Bas W := \{ c_1(\xi) : \xi \in \Spinc(W) \text{ with }\SW_W(\xi) \neq 0\} \subset H^2(W;\IZ). \]

We also need to define basic classes for a closed $3$-manifold $N$ with $b_1(N) \geq 2$.
For such a $3$-manifold we have $b_2^+(N \times S^1) = b_1(N) \geq 2$.
The projection $\Map p {N \times S^1} N$ gives rise to a map 
\begin{align*}
\Map {p^*} {\Spinc(N)} {\Spinc(N \times S^1)}.
\end{align*}
We describe this map in greater detail in the context of circle bundles in Section
\ref{SWforCirclebundles}. 
As above, we associate to each element $\xi \in \Spinc(N)$, a class  $c_1(\xi)$, 
the Chern class of the determinant line bundle.
The Chern classes $c_1(\xi)$ and
$c_1(p^* \xi)$ are related by the equation $p^* c_1(\xi) = c_1( p^* \xi)$.
We define the collection of basic classes on $N$ to be
\begin{align*}
\Bas N := \{ c_1(\xi) : \xi \in \Spinc(N) \text{ with } \SW_{N \times S^1}( p^* \xi) \neq 0\}.
\end{align*}

For our purposes, it will be enough to define the Seiberg-Witten invariant 
of a $\Spinc$-structure~$\xi$ of $3$-manifold~$N$ 
in terms of the invariant of the $4$-manifold $N\times S^1$ by 
\[ \SW_N(\xi) := \SW_{N\times S^1}(p^*\xi). \]
It is possible to express the $3$-dimensional Seiberg-Witten invariants~$\SW_N$ for a $3$-manifold
purely in terms of the moduli space of Seiberg-Witten monopoles over $N$.
See \cite[Section 2]{MengTaubes96} for the equations and the construction
of the moduli space. These two approaches give the same invariant \cite[Proposition 7]{Baldridge01}.
We refer the reader to Nicolaescu~\cite[Chapter 4]{Nico03} for an  introduction also 
explaining the connection with Reidemeister torsion,

The next definition is a rather technical condition, which is fulfilled for a large
class of irreducible $3$-manifolds. With this condition in hand, we can 
state the main theorem of this article. 

Before the definition, we quickly recall the notion of transfer;
see \cite[Definition 11.2]{Bredon93} for more details. Let $\Map f M N$
be a map between two closed oriented manifold of the same dimension.
The \emph{transfer map}~$\Map {f^!} {H_k (N;\IZ)} {H_k(M; \IZ)}$ is
\[ f^! = \PD_M \circ f^* \circ \PD_N^{-1}, \]
where $f^*$ denotes the induced map in cohomology. 
As suggested by the notation, associating $f^!$ to $f$ is contravariant.

\begin{defn}
A $3$-manifold $N$ has \emph{enough basic classes} if the following holds: 
for every finite cover $\Map g M N$ and every class $\sigma \in H_2(N;\IZ)$ 
there is a finite cover $\Map h P M$ with $b_1(P) \geq 3$ and a basic class $s \in \Bas(P)$ 
such that
\begin{align*}
	\Thur {(g\circ h)^!\sigma} = \langle s, (g\circ h)^!\sigma \rangle.
\end{align*}
\end{defn}

\begin{thm}\label{FinalEstimate}
Let $N$ be an irreducible and closed $3$-manifold.
Let $\Map p W N$ be an oriented circle bundle over $N$. Suppose $N$ has
enough basic classes.
Then each class $\sigma \in H_2(W;\IZ)$ satisfies the inequality
\begin{align*}
	\Comp(\sigma) &\geq | \sigma \cdot \sigma | + \Thur{p_* \sigma }.
\end{align*}
\end{thm}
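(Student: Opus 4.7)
The plan is to transfer the question to a finite cover of $W$, apply the adjunction inequality with a Seiberg--Witten basic class on the total space constructed via Baldridge's formula from a basic class of the base, and conclude by multiplicativity of all three quantities under finite covers.

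First I would reduce to a suitable cover. For any finite cover $g\colon M\to N$ with induced pullback circle bundle cover $q\colon \widetilde W \to W$, all three quantities scale by $\deg g$: $\Comp(q^!\sigma) \leq (\deg g)\Comp(\sigma)$ because pulling back a minimizing surface multiplies $\chi_-$ by the degree, the intersection pairing obeys $q^!\sigma\cdot q^!\sigma = (\deg g)(\sigma\cdot\sigma)$, and Gabai's theorem gives $\Thur{g^!p_*\sigma} = (\deg g)\Thur{p_*\sigma}$. Hence it suffices to prove the inequality after pulling back. By the hypothesis that $N$ has enough basic classes I can choose $g$ so that $b_1(M)\ge 3$ (guaranteeing $b_2^+(\widetilde W)\ge 2$) and so that there is a basic class $s\in\Bas(M)$ with $\Thur{g^!p_*\sigma} = \langle s, g^!p_*\sigma\rangle$.

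Next I would promote $s$ to a basic class on a suitable modification of the total space. Writing $p_M\colon \widetilde W \to M$ for the pullback circle bundle, Baldridge's theorem expresses $\SW_{\widetilde W}$ on the $\Spinc$-structure with $c_1=p_M^*s$ as a sum of values $\SW_M$ taken over $\Spinc$-structures on $M$ differing by multiples of the Euler class. These terms may cancel, and this is precisely the phenomenon that forced Friedl and Vidussi to exclude finitely many circle bundles. To sidestep it, fix an embedded $\chi_-$-minimizing representative $\Sigma_0\subset W$ of $\sigma$, pull it back to $\Sigma=q^{-1}(\Sigma_0)\subset \widetilde W$, and choose a loop $\gamma\subset M$ generically disjoint from $p_M(\Sigma)$. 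Replace $\widetilde W$ by the circle bundle $\widetilde W'$ over $M$ with Euler class $e_{\widetilde W}+n\cdot\PD[\gamma]$. Since $[\Sigma]\cdot[p_M^{-1}(\gamma)]=p_{M*}[\Sigma]\cdot[\gamma]=0$, the surface $\Sigma$ can be isotoped into the common part of $\widetilde W$ and $\widetilde W'$, so it represents a class in $H_2(\widetilde W';\IZ)$ with the same $\chi_-$, self-intersection, and pairing against $p_M^*s$. Iterating with enough loops $\gamma$ separates the finitely many basic classes of $M$ contributing to the Baldridge sum, so the sum no longer cancels and $K:=p_M^*s\in\Bas(\widetilde W')$.

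Finally, the adjunction inequality in $\widetilde W'$ applied to $(K,\Sigma)$ gives
\[ \chi_-(\Sigma)\geq [\Sigma]\cdot[\Sigma]+|\langle K,[\Sigma]\rangle|, \]
and combining with the analogous inequality for $-K$ and the bundle with reversed fibre orientation upgrades the first term to $|[\Sigma]\cdot[\Sigma]|$. Unfolding the pairing via the base-change identity $p_{M*}q^!=g^!p_*$,
\[ \langle K,[\Sigma]\rangle=\langle s,g^!p_*\sigma\rangle=(\deg g)\Thur{p_*\sigma}, \]
together with $[\Sigma]\cdot[\Sigma]=(\deg g)(\sigma\cdot\sigma)$ and $\chi_-(\Sigma)=(\deg g)\Comp(\sigma)$, so dividing by $\deg g$ yields the theorem. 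The main obstacle will be the second step, namely arranging the Euler class modification so that it eliminates every cancellation in Baldridge's formula without disturbing the geometry of $\Sigma$; this is precisely the technical improvement that removes the Euler class restriction of Friedl--Vidussi.
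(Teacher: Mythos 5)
There is a genuine gap, and it sits exactly where you flag that ``the main obstacle will be the second step.'' You propose to ``choose a loop $\gamma\subset M$ generically disjoint from $p_M(\Sigma)$'' and, later, to isotope $\Sigma$ off $p_M^{-1}(\gamma)$ because the algebraic intersection number vanishes. Neither step works as stated. In a $3$-manifold a generic loop and a generic surface intersect transversally in finitely many points, not in the empty set, so the loop you want will in general not exist; and in the $4$-manifold $\widetilde W$, vanishing algebraic intersection of the two $2$-dimensional submanifolds $\Sigma$ and $p_M^{-1}(\gamma)$ does not let you isotope them apart. The paper explicitly warns about this (``For a general surface $\Sigma$ such a loop will not exist'') after Lemma~\ref{DisjointLoop}, which is the result your argument implicitly relies on.

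The paper's remedy has two parts that your proposal is missing. First, Lemma~\ref{Tubing} pipes $\Sigma$ along $1$-handles to a surface $\Sigma'$ disjoint from $p^{-1}(\gamma)$ at the cost $\chi_-(\Sigma')\le\chi_-(\Sigma)+m$, where $m$ is the geometric intersection number; you instead assert ``the same $\chi_-$,'' which is false. Second, and this is the key idea, Lemma~\ref{NonTorsionCase} passes to a degree-$k$ cyclic cover $N_k\to N$ chosen so that $\gamma$ has $k$ disjoint lifts, picks a single lift $\gamma_k$, and observes that the geometric intersection number between $\widetilde g_k^{-1}(\Sigma)$ and $p_k^{-1}(\gamma_k)$ is still only $m$ (not $km$) while $\chi_-$ scales by $k$. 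After piping and applying the disjoint-loop lemma one obtains $\Comp(\sigma)+\tfrac{m}{k}\geq|\sigma\cdot\sigma|+\Thur{p_*\sigma}$ for all $k$, and the non-degeneracy follows in the limit. Your proposal has no mechanism for absorbing the piping cost, so it cannot close. By contrast, the part you identify as the obstacle---killing cancellations in Baldridge's sum---is comparatively easy: since $[\gamma]$ is non-torsion and $\SW_N$ has finite support, adding $k\,\PD[\gamma]$ to the Euler class for a single large $k$ already isolates the summand $\SW_N(\xi)$; no iteration over many loops is required. Finally, note the paper separates a torsion-Euler-class case (Lemma~\ref{TorsionCase}), where after a finite cover the bundle is trivial and no piping is needed, from the non-torsion case just described; folding both into one argument as you do is fine in spirit but the limit step over covers is indispensable in the non-torsion case.
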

\begin{proof}
This is a combination of Lemma~\ref{TorsionCase} and Lemma~\ref{NonTorsionCase} below. 
The term \emph{non-degenerate} is defined in Definition~\ref{defn:NonDegen}.
\end{proof}

We discuss which graph manifolds have enough basic classes. Eventually,
we will obtain the theorem stated in the introduction in the form of Theorem~\ref{Thm:AResult}. 

In the case of an aspherical $3$-manifold with virtually RFRS fundamental group, Friedl-Vidussi~\cite{Friedl14}
made essential use that these $3$-manifolds virtually fibre by Agol's virtual fibred theorem~\cite{Agol08}.
This fails dramatically
in the realm of graph manifolds. As already mentioned, there are graph manifolds which do not
admit a single finite cover which fibres. In the case of graph manifolds we will therefore
rely on the description of basic classes in terms of the maximal abelian torsion:
Building on \cite{MengTaubes96} the equality between the Seiberg-Witten invariant
and the torsion function was described in \cite[Theorem 1]{Turaev98}.
The following theorem is a direct consequence of this equality.
We again adopt the convention here that the maximum over the empty set is $0$.
\begin{thm}[Turaev]\label{AlexanderBasicClass}
Let $N$ be a closed $3$-manifold with $b_1(N) \geq 2$.
\begin{enumerate}
\item If the maximal abelian torsion is non-zero then $\Bas N$ is non-empty.
\item Each class $\sigma \in H_2(N; \IZ)$ satisfies the equation
\begin{align*}
\TorN {\PD \sigma} = \max_{s \in \Bas N} \langle s, \sigma \rangle.
\end{align*}
\end{enumerate}
\end{thm}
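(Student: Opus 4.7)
The plan is to deduce both statements from Turaev's identification \cite[Theorem 1]{Turaev98} of the three-dimensional Seiberg--Witten function with the maximal abelian torsion $\Delta_N \in \IZ[H_1(N;\IZ)]$ (using \cite[Proposition 7]{Baldridge01} as indicated in the excerpt to reconcile the two definitions of $\SW_N$). Since $N$ is a closed orientable $3$-manifold it admits a Spin structure, so I would fix one $\xi_0 \in \Spinc(N)$ with $c_1(\xi_0) = 0$; the $H^2(N;\IZ) \cong H_1(N;\IZ)$-torsor structure then identifies $\xi \leftrightarrow h$ via $\xi_h := \xi_0 + \PD^{-1}(h)$. For a suitable choice of the fundamental family $\mathfrak{e}$, Turaev's theorem can be arranged to read $\SW_N(\xi_h) = \pm a_h$, where $\Delta_N = \sum_h a_h\, h$ is the expansion of the maximal abelian torsion.

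Part~(1) is then immediate: if $\Delta_N \neq 0$, some $a_h$ is non-zero, hence $\SW_N(\xi_h) \neq 0$ and $c_1(\xi_h) \in \Bas N$. For part~(2), a direct comparison of Poincar\'e duality with the evaluation pairing gives
\begin{align*}
\langle c_1(\xi_h), \sigma \rangle \;=\; 2 \langle \PD^{-1}(h), \sigma \rangle \;=\; 2\,(\PD\sigma)(h),
\end{align*}
so that
\begin{align*}
\max_{s \in \Bas N} \langle s, \sigma \rangle \;=\; 2 \max_{\{h \,:\, a_h \neq 0\}} (\PD\sigma)(h).
\end{align*}

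To match this with $\TorN{\PD\sigma}$ I would invoke the charge-conjugation symmetry of Seiberg--Witten theory: $\SW_N(\bar\xi) = \pm \SW_N(\xi)$ together with $c_1(\bar\xi) = -c_1(\xi)$. Under the identification above, and using $c_1(\xi_0) = 0$, this translates to the statement that the support $\{h : a_h \neq 0\}$ is closed under $h \mapsto -h$. Therefore $\min_{a_h \neq 0}(\PD\sigma)(h) = -\max_{a_h \neq 0}(\PD\sigma)(h)$, and Definition~\ref{defn:TorsionNorm} then yields
\begin{align*}
\TorN{\PD\sigma} \;=\; \max_{a_h \neq 0}(\PD\sigma)(h) - \min_{a_h \neq 0}(\PD\sigma)(h) \;=\; 2\max_{a_h \neq 0}(\PD\sigma)(h),
\end{align*}
which matches the displayed right-hand side. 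The case $\Delta_N = 0$ is consistent because both sides are then $0$ under the convention that the maximum over the empty set is $0$.

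The main technical point to track is the interplay of the various translation ambiguities. Changing the fundamental family $\mathfrak{e}$ multiplies $\Delta_N$ by $\pm h$ and so translates its support; but $\TorN{\PD\sigma}$ depends only on the width $\max - \min$ of the support and is therefore insensitive to this, while $\Bas N$ is genuinely symmetric under $s \mapsto -s$ by charge conjugation. Choosing $\xi_0$ Spin synchronises the two symmetries so that the central computation above goes through on the nose.
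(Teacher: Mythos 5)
Your proposal is correct and supplies the argument that the paper simply defers to by citation: the paper's ``proof'' consists only of the references \cite[Section 21]{Turaev01} and \cite[Section IX.1.2]{Turaev02}, and what you write is a sensible unpacking of the combination of Turaev's equality $\SW_N=\pm$(torsion function), the identification of $\Spinc$-structures with Euler structures, and the duality/charge-conjugation symmetry, which is exactly the content of those references. One remark worth making: the normalization you invoke (choosing $\mathfrak{e}$ so that $h=0$ corresponds to the Spin structure $\xi_0$, hence $\langle c_1(\xi_h),\sigma\rangle = 2(\PD\sigma)(h)$) is achievable because the assignment of Euler structures to fundamental families is surjective, but it is also not logically necessary. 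For any choice of $\mathfrak{e}$ one has $\langle c_1(\xi_h),\sigma\rangle = \langle c_0,\sigma\rangle + 2(\PD\sigma)(h)$ for a fixed offset $c_0$, so $\max_s\langle s,\sigma\rangle - \min_s\langle s,\sigma\rangle = 2\,\TorN{\PD\sigma}$; combining this with the symmetry $\Bas N = -\Bas N$ (which forces $\max_s = -\min_s$) gives $\max_s\langle s,\sigma\rangle = \TorN{\PD\sigma}$ directly, with no normalization of the fundamental family required. Either way the argument is sound, and you are right that the translation ambiguity on the torsion side is harmless because $\TorN{\cdot}$ measures a width.
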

\begin{proof}
See \cite[Section 21]{Turaev01} and \cite[Section IX.1.2]{Turaev02}.
\end{proof}

The next lemma shows that the class of graph manifolds with enough
basic classes is rather large.
\begin{lem}\label{lem:GraphCase}
Let $N$ be a graph manifold which is not Seifert fibred and not covered by a torus bundle.
Then $N$ has enough basic classes.
\end{lem}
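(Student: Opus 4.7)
The plan is to reduce to the composite-type case via Lemma~\ref{Lem:GraphTypes} and then combine Theorem~\ref{AlexanderThurston} with Turaev's description of basic classes in Theorem~\ref{AlexanderBasicClass}.

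Since $N$ is not Seifert fibred and not covered by a torus bundle, going through the four classes in Definition~\ref{Defn:GraphTypes} rules out spherical, circle, and torus types; consequently $N$ is virtually of composite type, and this property is inherited by every finite cover of $N$. Given an arbitrary finite cover $g\colon M \rightarrow N$ and class $\sigma \in H_2(N;\IZ)$, I would first pass to a finite cover $M_0 \rightarrow M$ that carries a composite graph structure. I would then combine Lemma~\ref{PosCover} with further abelian covers of the block surfaces produced by Lemma~\ref{SurfaceBoundaryCover} and assembled via Lemma~\ref{GlueCovers} to obtain a cover $M_1 \rightarrow M_0$ with $b_1(M_1) \geq 3$; since each block surface has negative Euler characteristic and positive genus, a Mayer--Vietoris calculation shows that such covers can be arranged to make $b_1$ arbitrarily large. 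Because the transfer $p^!$ of a finite cover is injective on rational homology, every subsequent cover will still satisfy $b_1 \geq 3$.

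Next, I would apply Theorem~\ref{AlexanderThurston} to $M_1$, obtaining a finite cover $h\colon P \rightarrow M$ (factoring through $M_1$) on which the torsion norm and the Thurston norm coincide, with $P$ still of composite type and $b_1(P) \geq 3$. Set $\tau := (g \circ h)^! \sigma \in H_2(P;\IZ)$. By Proposition~\ref{TorsionProduct} applied to $P$ and Remark~\ref{MaxAbNonVan}, the maximal abelian torsion of $P$ is non-zero, so Theorem~\ref{AlexanderBasicClass}(1) gives $\Bas P \neq \emptyset$. Part~(2) of the same theorem, combined with the equality of the two norms on $P$, yields
\[
\Thur{\tau} \;=\; \TorN{\PD(\tau)} \;=\; \max_{s \in \Bas P}\, \langle s, \tau \rangle.
\]
Choosing $s \in \Bas P$ attaining this maximum gives $\Thur{(g\circ h)^!\sigma} = \langle s, (g\circ h)^!\sigma\rangle$, which is precisely the condition required in the definition of having enough basic classes.

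The main obstacle is coordinating the tower of finite covers without disturbing any of the properties we have arranged: composite type, $b_1 \geq 3$, and equality of torsion and Thurston norms. Injectivity of the transfer on rational homology takes care of $b_1$, and the characteristic-cover constructions of Section~\ref{CompositeFolds} that underlie Theorem~\ref{AlexanderThurston} are built block-by-block and assembled via Lemma~\ref{GlueCovers}, so they automatically preserve a composite graph structure. Once these compatibilities are verified the proof reduces to the clean chain of equalities above.
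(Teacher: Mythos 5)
Your overall strategy matches the paper's: reduce to the composite-type case, arrange a cover with $b_1\geq 3$ on which the torsion norm and the Thurston norm coincide, invoke Remark~\ref{MaxAbNonVan} to guarantee $\Bas P\neq\emptyset$, and then read off the required basic class from Theorem~\ref{AlexanderBasicClass}. Your final chain $\Thur{\tau}=\TorN{\PD(\tau)}=\max_{s\in\Bas P}\langle s,\tau\rangle$ and the observation that $b_1\geq 3$ persists under further covers because transfer is injective on rational homology are both exactly what is needed, and your way of handling the $\Thur{\cdot}=0$ and $\Thur{\cdot}\neq 0$ possibilities in one stroke via $\Bas P\neq\emptyset$ is in fact a little cleaner than the paper's case distinction.

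The one place where you deviate genuinely is the production of $b_1\geq 3$. The paper gets this at the outset: since $N$ is not Seifert fibred and not covered by a torus bundle, $\pi_1(N)$ is not solvable by \cite[Theorem~1.20]{Aschenbrenner12}, and combined with the presence of an incompressible torus this gives a finite cover with $b_1\geq 3$ via \cite[Diagram~1]{Aschenbrenner12}. You instead propose a hands-on construction: pass to composite type, apply Lemma~\ref{PosCover} and Lemma~\ref{SurfaceBoundaryCover} to raise genus of the block surfaces, reassemble with Lemma~\ref{GlueCovers}, and argue via Mayer--Vietoris that $b_1$ grows. That route can be made to work: after removing self-pastings there are at least two blocks, and in the Mayer--Vietoris sequence the image of $\bigoplus_T H_1(T;\IQ)$ in $\bigoplus_B H_1(B;\IQ)$ lands only in the ``boundary plus $S^1$'' subspace, so the rank-$2g(\Sigma_B)$ interior part of each $H_1(\Sigma_B;\IQ)$ survives to $H_1$ of the cover. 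But as written you assert ``a Mayer--Vietoris calculation shows\dots'' without giving this reason, and it is not quite automatic; you should either spell out why the genus parts are not killed, or simply cite \cite{Aschenbrenner12} as the paper does. Apart from this, your argument is correct and parallel to the paper's.
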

\begin{proof}
We are given a finite cover $\Map g M N$ and a class $\sigma \in H_2(N;\IZ)$. 
In fact we will see that we can find a suitable cover $\Map h P M$ which does not depend on the choice
of $\sigma$.
As $N$ is not Seifert fibred and not covered by a torus bundle, 
the fundamental group $\pi_1(N)$ is not solvable \cite[Theorem 1.20]{Aschenbrenner12}.
Using the fact that it has to contain at least one incompressible torus, 
there is a finite cover
$\widetilde M \rightarrow M$ with $b_1(\widetilde M) \geq 3$, see \cite[Diagram 1]{Aschenbrenner12}.
The $3$-manifold~$M$ is again not covered by Seifert fibred manifold nor covered by torus bundle. 
Thus $M$ is neither virtually of circle type, spherical type nor torus type.
Using Lemma~\ref{Lem:GraphTypes} we deduce that $M$ has to be virtually of composite type.
Pick a cover $P \rightarrow \widetilde M$ such that $P$ is of composite type and is fulfilling
the assumptions of Section \ref{CompositeFolds}.
The induced cover $\Map h P M$ inherits the condition
$b_1(P) \geq 3$ from $\widetilde M$. 
We are left to prove that there is an $s \in \Bas P$ such that
\begin{align*}
	\Thur {(g\circ h)^!\sigma} = \langle s, (g\circ h)^!\sigma \rangle.
\end{align*}
If the Thurston norm of $P$ 
does not vanish, then this can be seen by Theorem \ref{AlexanderBasicClass} 
and Proposition \ref{TorsionProduct}.
If the Thurston norm vanishes we still have to prove that $\Bas P$ is non-empty.
Again this holds by Theorem \ref{AlexanderBasicClass} and Remark \ref{MaxAbNonVan}.
\end{proof}

\begin{thm}\label{Thm:AResult}
Let $N$ be an irreducible and closed $3$-manifold.
Assume that $N$ is not a Seifert fibred space and not covered by a torus bundle.
Let $\Map p W N$ be an oriented circle bundle. Then each class
$\sigma \in H_2(W;\IZ)$ satisfies the inequality 
\begin{align*}
\Comp(\sigma) \geq | \sigma \cdot \sigma | + \Thur {p_* \sigma}.
\end{align*}
\end{thm}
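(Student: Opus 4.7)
The plan is to invoke Theorem~\ref{FinalEstimate}, which yields exactly the desired inequality under the sole hypothesis that $N$ has enough basic classes. The task therefore reduces to verifying this property for every irreducible, closed $3$-manifold that is neither Seifert fibred nor covered by a torus bundle. I would split the argument according to whether $N$ is a graph manifold or not.

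In the graph manifold case, Lemma~\ref{lem:GraphCase} applies directly: its hypotheses coincide with those of the theorem restricted to graph manifolds, so $N$ has enough basic classes and we are done.

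In the non-graph case, the JSJ decomposition of $N$ contains at least one hyperbolic piece. Here I would invoke Agol's virtual fibering theorem, via Wise's result that $\pi_1(N)$ is virtually RFRS, to argue that, given any finite cover $g\colon M \to N$ and any class $\sigma \in H_2(N;\IZ)$, there is a further finite cover $h\colon P \to M$ with $b_1(P) \geq 3$ such that the Poincaré dual of $(g\circ h)^! \sigma$ lies on the closure of a fibred face of the Thurston norm ball of $P$. For such a class the Alexander and Thurston norms coincide (McMullen), and combining this with Theorem~\ref{AlexanderBasicClass} produces a Seiberg-Witten basic class of $P$ realizing $\Thur{(g\circ h)^! \sigma}$, which is precisely the ``enough basic classes'' condition.

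The main obstacle lies in the non-graph case, which rests on deep external machinery — Agol's virtual fibering theorem and the theory of special cube complexes — that is not developed in the paper itself. The genuinely new input of the present article is the graph manifold case, handled uniformly in the Euler class by Lemma~\ref{lem:GraphCase} and the torsion computations of Section~\ref{CompositeFolds}, and then absorbed into the common framework of Theorem~\ref{FinalEstimate}.
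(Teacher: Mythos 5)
Your approach is correct and follows the same two-case structure as the paper: graph manifolds are handled by Lemma~\ref{lem:GraphCase}, non-graph manifolds via the virtually RFRS property of $\pi_1(N)$, and both cases are fed into Theorem~\ref{FinalEstimate}. The one place you diverge is how you establish \emph{enough basic classes} in the non-graph case. The paper simply cites \cite[Corollary 3.4]{Friedl14}, whose proof goes through Taubes' computation of the Seiberg--Witten invariant of a symplectic $4$-manifold applied to $P \times S^1$ for $P$ a fibred cover. You instead propose McMullen's equality of the Alexander and Thurston norms on the closure of a fibred cone, followed by Theorem~\ref{AlexanderBasicClass} to extract a basic class. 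This is a plausible alternative route, but note that Theorem~\ref{AlexanderBasicClass} is stated for the \emph{torsion} norm of Definition~\ref{defn:TorsionNorm} (built from Turaev's maximal abelian torsion), not McMullen's Alexander norm; for a closed $3$-manifold with $b_1 \geq 2$ these agree, but that identification is a nontrivial step that should be made explicit rather than silently elided. The paper avoids this subtlety by invoking the Friedl--Vidussi corollary as a black box; your version re-derives it along a slightly different chain, which buys a more self-contained exposition at the cost of having to verify the Alexander-norm/torsion-norm comparison.
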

\begin{proof}
A closed graph manifold has enough basic classes by Lemma~\ref{lem:GraphCase}.

If $N$ is not a closed graph manifold, then $\pi_1(N)$ is virtually
special by work of Kahn-Markovic~\cite{Kahn12},
Wise~\cite{Wise11}, Bergeron-Wise~\cite{Wise12}, 
Przytycki-Wise~\cite{Przy12} and Agol~\cite{Agol13}. 
By Agol~\cite[Corollary 2.3]{Agol08} and Haglund-Wise~\cite{Haglund08}
the group $\pi_1(N)$ is virtually RFRS.
A closed irreducible and aspherical $3$-manifold with virtually
RFRS fundamental group has enough basic classes \cite[Corollary 3.4]{Friedl14}. 
This is a consequence of Agol's virtually fibred theorem \cite[Theorem 5.1]{Agol08}
in combination with a result due to Taubes \cite[Main Theorem]{Taubes94} calculating the Seiberg-Witten invariant
associated with a symplectic structure.

Either way, we conclude that our $3$-manifold has enough basic classes, and
in the light of Theorem~\ref{FinalEstimate} we obtain the inequality.
\end{proof}
We continue with the discussion of the proof of Theorem~\ref{FinalEstimate}.
A central tool for estimating the complexity of embedded surfaces
is the adjunction inequality. For positive self-intersection it has been obtained by 
Kronheimer and Mrowka \cite[Section 6]{KM94}.
Later Ozsváth and Szabó \cite[Corollary 1.7]{OS00} proved it for negative self-intersection 
in $4$-manifolds of simple type.
We use the following variant, which is specific to circle bundles but still relies on 
the adjunction inequality for positive self-intersection.
\begin{lem}\label{AdjunctionInequality}
	Let $W$ be a circle bundle over an irreducible $3$-manifold $N$ 
	with $b_1(N) \geq 3$. Then for
	all $\sigma \in H_2(W;\IZ)$ and all basic classes $s \in \Bas W$ 
	the inequality
	\begin{align*}
		\Comp(\sigma) \geq | \sigma \cdot \sigma | 
				+ \langle s, \sigma \rangle
	\end{align*}
	holds.
\end{lem}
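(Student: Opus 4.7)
The plan is to reduce the inequality to the Kronheimer--Mrowka adjunction for surfaces of non-negative self-intersection, which is available on $W$ because $b_2^+(W) = b_1(N) \geq 3$. Since conjugation of $\Spinc$-structures sends a basic class $s \in \Bas W$ to $-s$, it is enough to prove the symmetric form $\Comp(\sigma) \geq |\sigma \cdot \sigma| + |\langle s, \sigma \rangle|$. When $\sigma \cdot \sigma \geq 0$, this is an immediate application of Kronheimer--Mrowka to a minimum-complexity embedded representative of~$\sigma$.

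For $\sigma \cdot \sigma < 0$, I would exploit the circle bundle structure to fabricate an auxiliary class of non-negative self-intersection. By Baldridge's description of basic classes of circle bundles (discussed in Section~\ref{SWforCirclebundles}), $s = p^* s_0$ is pulled back from a class $s_0$ on $N$. Consequently, for any embedded loop $\gamma \subset N$, the torus $T_\gamma := p^{-1}(\gamma) \subset W$ satisfies $\langle s, T_\gamma \rangle = \langle s_0, p_*[T_\gamma] \rangle = 0$ (since $p|_{T_\gamma}$ factors through the $1$-manifold $\gamma$) and $T_\gamma \cdot T_\gamma = 0$ (push off to a disjoint parallel loop). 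The Gysin sequence identifies $\ker p_* \subset H_2(W;\IZ)$ with a quotient of $H_1(N;\IZ)$ whose classes are such tori with zero self-intersection; this handles $p_*\sigma = 0$, and using $b_1(N) \geq 3$ we choose $\gamma$ with $\sigma \cdot T_\gamma = \langle \PD(\gamma), p_*\sigma\rangle > 0$ in the remaining case.

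Taking a minimum-complexity embedded representative $\Sigma$ of $\sigma$ meeting $T_\gamma$ transversally in the minimum geometric count (arranged by Whitney-type moves, available because $\pi_1(W)$ is large), the next step is to tube $\Sigma$ with $n$ disjoint parallel copies of $T_\gamma$ and resolve the $n(\sigma \cdot T_\gamma)$ intersection points. This produces an embedded surface $\Sigma_n$ representing $\sigma + nT_\gamma$ with $\chi_-(\Sigma_n) \leq \Comp(\sigma) + 2n(\sigma \cdot T_\gamma)$. For $n$ sufficiently large, $(\sigma+nT_\gamma)^2 = \sigma^2 + 2n(\sigma \cdot T_\gamma) \geq 0$, and Kronheimer--Mrowka gives
\[ \Comp(\sigma + nT_\gamma) \geq (\sigma+nT_\gamma)^2 + |\langle s, \sigma + nT_\gamma \rangle| = \sigma^2 + 2n(\sigma \cdot T_\gamma) + |\langle s, \sigma \rangle|. \]
Cancelling the $2n(\sigma \cdot T_\gamma)$ contributions yields $\Comp(\sigma) \geq \sigma^2 + |\langle s, \sigma \rangle|$, which is the target bound once $\sigma^2 \geq 0$. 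The hard part is promoting this to the sharp $|\sigma^2| + |\langle s, \sigma\rangle|$ when $\sigma^2 < 0$: the naive tubing estimate loses a term of size $-2\sigma^2$, and closing this gap appears to require either a more parsimonious surgery that uses the $S^1$-fibration direction, or combining tubings against several tori whose mutual intersection data compensates for the negative self-intersection of~$\sigma$.
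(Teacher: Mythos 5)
The paper's own ``proof'' of this lemma is a one-line citation to Friedl--Vidussi (their Theorem~3.1), so there is no internal argument to compare yours against. The surrounding remark in the paper does confirm that the intended route is exactly the one you chose: reduce to the Kronheimer--Mrowka adjunction inequality for non-negative self-intersection using the circle-bundle structure, rather than invoking the Ozsv\'ath--Szab\'o simple-type adjunction for negative squares. Your preliminary reductions are sound: $\Bas W$ is symmetric under $s \mapsto -s$; when $p_*\sigma = 0$ the class lies in the Poincar\'e dual of $\Ima(p^*)\subset H^2(W)$, which is isotropic for the intersection form and annihilated by $s = p^*s_0$, so the inequality is vacuous there; and vertical tori $T_\gamma = p^{-1}(\gamma)$ indeed have $T_\gamma\cdot T_\gamma = 0$ and $\langle s, T_\gamma\rangle = 0$.

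The gap you flag at the end, however, is real and is fatal to the tubing approach as you have set it up. Resolving one transverse intersection of $\Sigma$ with a vertical torus costs $2$ units of complexity and also adds exactly $2$ to the self-intersection. These two contributions always cancel: for any $n$ and any collection of vertical tori, the chain
\begin{align*}
\Comp(\sigma) + 2n(\sigma\cdot T_\gamma) \;\geq\; \chi_-(\Sigma_n) \;\geq\; (\sigma + nT_\gamma)^2 + |\langle s,\sigma\rangle| \;=\; \sigma^2 + 2n(\sigma\cdot T_\gamma) + |\langle s,\sigma\rangle|
\end{align*}
collapses to $\Comp(\sigma) \geq \sigma^2 + |\langle s,\sigma\rangle|$ regardless of $n$. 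Since every vertical torus has square zero and pairs trivially with $s$, replacing one torus by several over different loops, or taking multiplicities, does not change this bookkeeping: the ratio of tubing cost to self-intersection gain is pinned at $1$, and the argument can never recover the missing $-2\sigma^2$ when $\sigma^2 < 0$. Closing this gap requires a genuinely different mechanism specific to circle bundles, not a refinement of the torus-tubing count; that mechanism is precisely the content of Friedl--Vidussi's Theorem~3.1, which you should cite (as the paper does) rather than attempt to rederive from Kronheimer--Mrowka by tubing alone.
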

\begin{proof}
	See \cite[Theorem 3.1]{Friedl14}.
\end{proof}
We dissect the proof of Theorem \ref{FinalEstimate} into multiple parts.
First we study how the complexity function $\Comp$ changes in finite covers.
In the second step we explain the proof of the case that the Euler class of the circle
bundle is torsion. 
Key here is that for a trivial
circle bundle the pull-back of basic class is still a basic class.
As we will see, this also holds in the non-torsion case for all but finitely many Euler classes.
A change of the circle bundle by a modification away from a minimal surface 
gives a way to work around these Euler classes. But to be able to modify 
the circle bundle away from a surface we might have to increase its complexity.
This again can be countered by taking finite covers.
\subsection{Finite covers and embedded surfaces}
We study how the various quantities in the inequality change in finite covers.
Let $\Map p W N$ be a circle bundle over a $3$-manifold $N$. In all steps of the proof 
of Theorem \ref{FinalEstimate} we only
need to consider finite covers $W$ which are induced by a finite cover of $N$.
Let $\Map{g}{M}{N}$ be a finite cover. 
We pull back the circle bundle $\Map p W N$ to a circle bundle $\Map {\widetilde p} {g^*W} M$. 
The induced map $\Map {\widetilde g} {g^*W} W$ is a finite cover.
The situation is described in the diagram
\begin{center}
	\begin{tikzpicture}
		\matrix (m) [matrix of math nodes, row sep=3em, column sep=4em,
					text height=1.5ex, text depth=0.25ex]
  		{
		     g^* W & W \\
		     M & N\\
		};
		\path[->] (m-1-1) edge node [left] {$\widetilde p$} (m-2-1);
		\path[->] (m-1-1) edge node [above] {$\widetilde g$} (m-1-2);
		\path[->] (m-1-2) edge node [right] {$p$} (m-2-2);
		\path[->] (m-2-1) edge node [below] {$g$} (m-2-2);
	\end{tikzpicture}.
\end{center}
The following definition is rather auxiliary. In the end our goal is to prove
that all embedded surfaces are non-degenerate.
\begin{defn}\label{defn:NonDegen}
Let $\Map p W N$ be a circle bundle.
\begin{enumerate}
\item An embedded surface $\Sigma \subset W$ is \emph{non-degenerate}
if
\begin{align*}
	\chi_-(\Sigma) \geq |\sigma \cdot \sigma | + \Thur {p_* \sigma}
\end{align*}
holds for its fundamental class $\sigma \in H_2(W;\IZ)$.
\item A class $\sigma \in H_2(W;\IZ)$ is said to be \emph{non-degenerate}
if 
\begin{align*}
	\Comp {(\sigma)} \geq |\sigma \cdot \sigma | + \Thur {p_* \sigma}
\end{align*}
holds.
\end{enumerate}
\end{defn}

The next lemma explains how non-degenerateness is inherited from 
the finite covers we have described above.
\begin{lem}\label{FiniteCovers}
In the situation above, let $\Sigma \subset W$ be an embedded surface
and $\sigma \in H_2(W;\IZ)$ its fundamental class.
\begin{enumerate}
\item The equality $\widetilde p_* \widetilde g^! = g^! p_*$ holds.
\item If the surface $\widetilde g^{-1} (\Sigma)$ is non-degenerate, then
so is the surface $\Sigma$.
\item If the transfer $\widetilde g^! \sigma$ is non-degenerate, then
so is the class $\sigma$.
\end{enumerate}
\end{lem}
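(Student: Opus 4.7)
My plan proceeds in three steps matching the three parts; the organising principle is that every quantity in sight scales by the common degree $d := \deg(g) = \deg(\widetilde g)$, and so the whole lemma reduces to the base-change identity in (1) plus three familiar multiplicativity statements.

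For (1), I would establish $\widetilde p_* \widetilde g^! = g^! p_*$ directly at the level of cycles, using the explicit pullback description $g^*W = \{(m, w) \in M \times W : g(m) = p(w)\}$ with $\widetilde g(m, w) = w$ and $\widetilde p(m, w) = m$. Representing $\sigma$ by a smooth 2-cycle $c \subset W$, one has $\widetilde g^{-1}(c) = \{(m, w) \in g^*W : w \in c\}$, and a fibrewise count gives the equality of chains $\widetilde p(\widetilde g^{-1}(c)) = g^{-1}(p(c))$ with matching multiplicities. This is the standard base-change formula for the transfer in a pullback square of finite covers.

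For (2), let $d$ denote the common degree. I would invoke three multiplicativity facts: (a) $\chi_-(\widetilde g^{-1}(\Sigma)) = d\,\chi_-(\Sigma)$, since $\chi_-$ is multiplicative under finite covers of oriented surfaces (Euler characteristic is multiplicative, and no component can switch sign of $\chi$ under covering); (b) $(\widetilde g^! \sigma)\cdot(\widetilde g^! \sigma) = d\,(\sigma\cdot\sigma)$, coming from $\widetilde g_*[g^*W] = d[W]$ together with naturality of the cup product; (c) Thurston's multiplicativity $\Thur{g^! \tau} = d\,\Thur{\tau}$. Combining these with (1), the non-degenerateness inequality for $\widetilde g^{-1}(\Sigma)$ rewrites as $d\,\chi_-(\Sigma) \geq d\,|\sigma\cdot\sigma| + d\,\Thur{p_* \sigma}$, and dividing by $d$ yields (2).

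For (3), the additional ingredient is that any properly embedded surface $\Sigma \subset W$ representing $\sigma$ pulls back to an embedded surface $\widetilde g^{-1}(\Sigma) \subset g^*W$ representing $\widetilde g^! \sigma$, with $\chi_-(\widetilde g^{-1}(\Sigma)) = d\,\chi_-(\Sigma)$; taking the infimum over all such $\Sigma$ gives $\Comp(\widetilde g^! \sigma) \leq d\,\Comp(\sigma)$. Feeding this into the hypothesised non-degenerateness of $\widetilde g^! \sigma$ and applying the same multiplicativities as in (2) yields $d\,\Comp(\sigma) \geq d\,|\sigma\cdot\sigma| + d\,\Thur{p_*\sigma}$, from which dividing by $d$ concludes. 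The only real obstacle is the bookkeeping to confirm that all three quantities on the right and the one on the left scale by exactly $d$; once this uniformity is observed, the lemma is essentially assembly from well-known ingredients.
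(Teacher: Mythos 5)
Your proposal is correct and follows essentially the same approach as the paper's proof: part~(1) is verified at the chain level using that $g$ and $\widetilde g$ are covers of the same degree, part~(2) combines the same three multiplicativity facts (for $\chi_-$, for self-intersection, and Gabai's for the Thurston norm), and part~(3) deduces $\Comp(\widetilde g^!\sigma)\leq \deg(g)\cdot\Comp(\sigma)$ by lifting a minimising surface, exactly as in the paper.
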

\begin{proof}
We abbreviate the transfer of $\sigma$ under the map $\widetilde g$ 
to $\widetilde \sigma := \widetilde g^! \sigma$.
\begin{enumerate}
	\item 	Note that $g$ and $\widetilde g$ are finite covers
		of the same degree and $g^!$ and $\widetilde g^!$ agree
		with the transfer maps of the corresponding finite covers.
		With this in mind the equality can be checked already on
		singular chain level.

	\item 	Because $\widetilde g$ is a covering map of degree $\deg g$,
		we have the equality
		\begin{align*}
			\chi_-(\widetilde g^{-1} (\Sigma)) = \deg g \cdot \chi_-(\Sigma).
		\end{align*}
		Also the self-intersection is multiplicative, i.e.
		\begin{align*}
		\deg{g}\; | \sigma \cdot \sigma | 
		= |\widetilde \sigma \cdot \widetilde \sigma|.
		\end{align*}
		We are left to show the equality
		\begin{align*}
			\Thur{\widetilde p_* \widetilde \sigma} 
			=\Thur{g^! p_*\sigma} 
			= \deg g \Thur{p_*\sigma}.
		\end{align*}
		We have already proved the first equality. The second
		equality follows from the fact that the Thurston
		norm is multiplicative under finite covers. 
		This was proven by Gabai \cite[Corollary 6.13]{Gabai83}.
	\item We have to show the inequality $\deg {g}\cdot \Comp(\sigma) 
		\geq \Comp(\widetilde \sigma)$.
		Let $\Sigma$ be a
		surface representing $\sigma$. The surface $\widetilde g^{-1}(\Sigma)$
		represents $\widetilde \sigma$. It has
		complexity $\chi_-( \widetilde g^{-1}(\Sigma) ) 
			= \deg g \cdot \chi_- (\Sigma)$.
		Therefore we obtain
		\begin{align*}
			\deg g\; \Comp(\sigma) &= \inf_{\Sigma} \chi_-( \widetilde g^{-1}(\Sigma) ) 
			\geq \Comp(\widetilde \sigma).
		\end{align*}
	\end{enumerate}
\end{proof}
\subsection{Basic classes of circle bundles}
\label{SWforCirclebundles}
For the rest of the article let $\Map p W N$ be a circle bundle over an connected and closed $3$-manifold $N$
with $b_1(N) \geq 3$. This implies $b_2^+(W) \geq b_1(N) - 1 = 2$, see \cite[Section 2]{Friedl14}.
Additionally suppose $N$ has enough basic classes.
Recall that by convention $N$ is irreducible and $W$
has oriented fibres. We denote the Euler class of $W$ by $e \in H^2(N;\IZ)$.
A connection on $W$ determines a splitting of the tangent bundle
\begin{align*}
TW = p^* TN \oplus \underline{\IR}.
\end{align*}
The pull-back of a $\Spinc$-structure on $N$ along $p$ gives rise to
a $\Spinc(3)$-structure on $W$, i.e. a $\Spinc(3)$-principal bundle $P$ over $W$
with a vector bundle isomorphism 
$\Spinc(3) \times_{\SO(4)} \IR^4 \cong  p^* TN \oplus \underline{\IR}$.
Prolonging along $\Spinc(3) \subset \Spinc(4)$, we obtain a $\Spinc$-structure $p^* \xi$ 
on $W$. We have the equality $c_1(p^* \xi) = p^* c_1(\xi)$.
Baldridge expressed the Seiberg-Witten invariants on $W$ 
in terms of the Seiberg-Witten invariants on $N$
and the Euler class. This is described in the next theorem.
\begin{thm}[Baldridge]\label{BaldrigeFormula}
Let $\xi \in \Spinc(N)$ be a $\Spinc$-structure.
If $e$ is non-torsion, then the equality
\begin{align*}
\SW_W(p^* \xi) = \sum_{k \in \IZ} \SW_N(\xi + k\cdot e)
\end{align*}
holds.
\end{thm}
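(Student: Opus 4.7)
The plan is to prove the formula via $S^1$-equivariant Seiberg-Witten theory, exploiting the free circle action on $W$ given by rotation of fibres.

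First, I would determine which $\Spinc$-structures on $N$ pull back to the same $\Spinc$-structure on $W$. Since $\Spinc$-structures form torsors over $H^2(-;\IZ)$ and the pullback of $\Spinc$-structures is equivariant for $p^*$ on $H^2$, this reduces to identifying the kernel of $\Map{p^*}{H^2(N;\IZ)}{H^2(W;\IZ)}$. The Gysin sequence
\[ H^0(N;\IZ) \xrightarrow{\cup e} H^2(N;\IZ) \xrightarrow{p^*} H^2(W;\IZ) \]
shows this kernel is generated by $e$. When $e$ is non-torsion, the classes $\xi + ke$ for $k \in \IZ$ are all distinct, and they form precisely the preimage of $p^*\xi$ under $p^*$. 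This accounts for the range of summation on the right-hand side.

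Next, I would equip $W$ with a metric induced from a connection form on the circle bundle together with a metric on $N$, making the $S^1$-action isometric, and choose $S^1$-invariant perturbations for the Seiberg-Witten equations. Dimensional reduction along the fibre then converts the four-dimensional equations for $p^*\xi$-spinors into three-dimensional Seiberg-Witten equations on $N$: an $S^1$-invariant spinor decomposes into weight spaces for the $S^1$-action, and the weight indexes a specific lift of $p^*\xi$ to a $\Spinc$-structure on $N$, namely $\xi + ke$ for some $k \in \IZ$. Summing the three-dimensional invariants $\SW_N(\xi + ke)$ over $k$ should recover $\SW_W(p^*\xi)$.

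The main obstacle is justifying that, up to gauge equivalence, every monopole on $W$ is $S^1$-invariant, so that restricting to invariant solutions captures the full Seiberg-Witten invariant. This is where the non-torsion hypothesis is essential: it guarantees $b_2^+(W) \geq 2$ (as recorded in the paragraph before the theorem), excludes reducible monopoles, and makes the curvature of the bundle non-trivial enough that a Weitzenböck-type estimate forces solutions to be essentially fibrewise constant. Finally, one needs a compactness argument to ensure only finitely many $k$ contribute, which follows from the compactness of the four-dimensional Seiberg-Witten moduli space together with the finiteness of the set of basic classes on $N$.
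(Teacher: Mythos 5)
The paper itself offers no proof of this statement: it is cited verbatim from Baldridge \cite[Theorem 1]{Baldridge01}, and the surrounding text relies on it as a black box. So you are not deviating from an argument the paper gives — you are reconstructing the proof of a result the paper defers to the literature.

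Your reconstruction is a fair high-level sketch of what Baldridge's argument looks like. The Gysin-sequence step is correct and cleanly stated: $\ker\bigl(p^*\colon H^2(N;\IZ)\to H^2(W;\IZ)\bigr) = \IZ\cdot e$, and equivariance of the pullback of $\Spinc$-structures identifies the fibre of $p^*\colon\Spinc(N)\to\Spinc(W)$ through $\xi$ with $\{\xi+ke : k\in\IZ\}$; the non-torsion hypothesis guarantees these are pairwise distinct, which is what makes the right-hand side an honest sum over $\IZ$ rather than a finite set counted with multiplicity. The strategy of an $S^1$-invariant metric and perturbation, decomposing invariant configurations into Fourier modes indexed by the weight $k$, and matching the weight-$k$ sector with the $3$-dimensional problem for $\xi+ke$, is also the right picture. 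Your compactness remark correctly explains why only finitely many $k$ contribute.

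The gap is where you acknowledge it: justifying that restricting to $S^1$-invariant solutions loses nothing. Two imprecisions here are worth flagging. First, $b_2^+(W)\geq 2$ does not follow from $e$ being non-torsion; it follows from the standing assumption $b_1(N)\geq 3$ in the surrounding section (indeed, for non-torsion $e$ one has $b_2^+(W)=b_1(N)-1$, which is \emph{smaller} than the value $b_1(N)$ for the trivial bundle, so the non-torsion hypothesis works against you here). The role $b_2^+\geq 2$ plays is to make the SW invariant metric-independent and to rule out reducibles generically; it is not the content of the non-torsion hypothesis. Second, the appeal to ``a Weitzenböck-type estimate [that] forces solutions to be essentially fibrewise constant'' is not the mechanism Baldridge uses. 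The standard route (going back to Mrowka--Ozsv\'ath--Yu for Seifert fibred spaces) is an adiabatic-limit argument: shrink the fibre metric and show that monopoles converge to pullbacks of $3$-dimensional solutions, together with a transversality/orientation bookkeeping that identifies the signed count on $W$ with the sum of signed counts on $N$. A naive Weitzenböck bound on $W$ does not by itself single out fibrewise-constant spinors. If you want to complete the sketch you should replace the curvature heuristic with the adiabatic-limit comparison of moduli spaces, and be explicit about how transversality for the $S^1$-equivariant problem is reconciled with transversality for the unperturbed $4$-dimensional problem — this is precisely the part that takes real work in Baldridge's paper.
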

\begin{proof}
\cite[Theorem 1]{Baldridge01}
\end{proof}

We proceed by proving Theorem \ref{FinalEstimate} in the torsion case.
\begin{lem}\label{TorsionCase}
	If the Euler class $e$ is torsion, then every class
	$\sigma \in H_2(W;\IZ)$ is non-degenerate.
\end{lem}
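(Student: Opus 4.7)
The plan is to pass to a finite cover of $N$ over which $W$ becomes trivial, and then on a yet smaller cover to produce a basic class that exactly realises the Thurston norm of $p_*\sigma$, so that the adjunction inequality can be applied cleanly.

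First, using that $e$ has finite order $n$, I will invoke the long exact sequence associated with $0 \to \IZ \xrightarrow{n} \IZ \to \IZ/n \to 0$ to write $e$ as the Bockstein $\beta(\bar e)$ of some $\bar e \in H^1(N;\IZ/n) = \Hom(\pi_1(N), \IZ/n)$. Taking $g\colon M \to N$ to be the finite cover induced by $\ker \bar e$ gives $g^*\bar e = 0$ by construction, and naturality of the Bockstein then gives $g^* e = 0$. Thus the pullback bundle is trivial, $g^*W \cong M \times S^1$, with projection $\widetilde p$; by Lemma~\ref{FiniteCovers}(3) it therefore suffices to prove that $\widetilde g^!\sigma$ is non-degenerate, where $\widetilde g\colon g^*W \to W$ denotes the induced cover of total spaces.

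The second step is to apply the enough basic classes property of $N$ to the cover $g$ and the class $p_*\sigma \in H_2(N;\IZ)$, producing a further finite cover $h\colon P \to M$ with $b_1(P) \geq 3$ and a basic class $s \in \Bas P$ such that $\Thur{(g\circ h)^! p_*\sigma} = \langle s,\,(g\circ h)^! p_*\sigma\rangle$. Let $q\colon P \times S^1 \to P$ denote the projection of the trivial bundle further pulled back along $h$, and $\widetilde h\colon P \times S^1 \to M \times S^1$ the corresponding cover of total spaces. Choosing $\xi \in \Spinc(P)$ with $c_1(\xi) = s$ and $\SW_{P \times S^1}(q^*\xi) \neq 0$, the identity $c_1(q^*\xi) = q^* c_1(\xi)$ places $q^*s$ in $\Bas(P \times S^1)$. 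Since $b_1(P) \geq 3$, Lemma~\ref{AdjunctionInequality} applies and gives
\[
\Comp\bigl(\widetilde h^! \widetilde g^! \sigma\bigr) \ \geq\ \bigl| \widetilde h^! \widetilde g^! \sigma \cdot \widetilde h^! \widetilde g^! \sigma \bigr| + \langle q^* s,\, \widetilde h^! \widetilde g^! \sigma \rangle.
\]
The projection formula combined with two applications of Lemma~\ref{FiniteCovers}(1) (once for $g$, once for $h$) gives $q_* \widetilde h^! \widetilde g^! \sigma = (g \circ h)^! p_*\sigma$, whence the last pairing rewrites as $\Thur{(g\circ h)^! p_*\sigma} = \Thur{q_* \widetilde h^! \widetilde g^! \sigma}$. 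This says $\widetilde h^! \widetilde g^! \sigma$ is non-degenerate in $P \times S^1$, and a final application of Lemma~\ref{FiniteCovers}(3) to the composition $\widetilde g \circ \widetilde h$ descends non-degeneracy to $\sigma$ in $W$.

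The genuinely delicate point is the first step, the existence of a finite cover of $N$ that trivialises $W$; once this reduction is in place, the remainder is essentially a direct substitution of the enough basic classes hypothesis for the role that Baldridge's formula plays in the non-torsion case.
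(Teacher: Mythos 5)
Your proof is correct and follows essentially the same route as the paper: trivialise the bundle over a finite cover, invoke the enough basic classes hypothesis to produce a basic class realising the Thurston norm on a further cover, pull it back along the trivial bundle projection, apply the adjunction inequality, and descend via Lemma~\ref{FiniteCovers}(3). The only substantive difference is cosmetic: where the paper cites \cite[Proposition 3]{Bowden09} for the existence of a trivialising cover, you give a self-contained Bockstein argument (noting $e = \beta(\bar e)$ for some $\bar e \in H^1(N;\IZ/n)$ and passing to the cover dual to $\bar e$); and where the paper writes ``we may assume there is a basic class $s \in \Bas M$'' to silently absorb the further cover $P$ supplied by the enough-basic-classes definition into the notation $M$, you keep $g$ and $h$ separate and track the two transfers explicitly. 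Both are faithful renderings of the same argument.
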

\begin{proof}
	Let $\sigma$ be a class in $H_2(W;\IZ)$.
	The Euler class $e$ of the circle bundle is torsion.
	This implies that there is a finite cover $\Map{g}{M}{N}$ such that 
	$\Map {\widetilde p} {g^* W} M$ is the trivial circle bundle, see \cite[Proposition 3]{Bowden09}.
	We know that $N$ has enough basic classes. As a consequence, we may assume that 
	there is a basic class $s \in \Bas M$ with
	$\Thur {g^! p_* \sigma} = \langle s, g^! p_* \sigma \rangle$.
	
	Denote $\widetilde g^! \sigma \in H_2(g^*W;\IZ)$ by $\widetilde \sigma$. 
	Using the equality $g^! p_* = \widetilde p_* \widetilde g^!$ stated in Lemma \ref{FiniteCovers},
	we obtain the equation $\Thur {\widetilde p_* \widetilde \sigma} = \langle s, \widetilde p_*\widetilde \sigma \rangle$.
	Because the circle bundle is trivial, the class $s$ is a basic class if and only if
	$\widetilde p ^* s$ is a basic class.
	The adjunction inequality gives
	\begin{align*}
		\Comp(\widetilde \sigma) 
		\geq | \widetilde \sigma \cdot \widetilde \sigma | 
			+ \langle \widetilde p^*s, \widetilde \sigma \rangle
		= | \widetilde \sigma \cdot \widetilde \sigma | 
			+ \langle s, \widetilde p_*\widetilde \sigma \rangle
		= | \widetilde \sigma \cdot \widetilde \sigma | 
			+ \Thur {\widetilde p_*\widetilde \sigma}.
	\end{align*}
	Thus $\widetilde \sigma \in H_2(g^* W;\IZ)$ 
	is non-degenerate and so is $\sigma \in H_2(W;\IZ)$ 
	by Lemma \ref{FiniteCovers}. 
\end{proof}
If the Euler class $e$ is non-torsion, there is the following problem: 
Even if we know that $s \in \Bas N$ is a basic class, it could happen that all summands
in Theorem \ref{BaldrigeFormula} cancel.
The next lemma shows that the Euler class is sensitive to local changes near an embedded loop
while preserving the rest of the circle bundle.
The key idea which allows to improve the result of Friedl and Vidussi~\cite{Friedl14} is
to change the Euler class away from a surface in a way that 
all summands in Theorem \ref{BaldrigeFormula} but $\SW_N(\xi)$ become zero.
\begin{lem}\label{EulerClone}
Let $\gamma \subset N$ be an embedded loop and $k \in \IZ$ an integer. 
There is a circle bundle 
$\Map {p_k} {W_k} N$ with Euler class $e(W_k) = e + k \PD([\gamma])$, and
an isomorphism of principal $S^1$-bundles $W_k |_{N\setminus {\gamma}} \cong W|_{N\setminus {\gamma}}$.
\end{lem}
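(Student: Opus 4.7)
My plan is to construct $W_k$ explicitly by a regluing localized near $\gamma$, and then verify both the Euler class computation and the required isomorphism using the classification of circle bundles by the first Chern class together with Mayer--Vietoris.

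First I would fix a tubular neighborhood $\nu = \nu(\gamma) \cong S^1 \times D^2$ with boundary $T \cong S^1 \times S^1$, and set $V = N \setminus \mathring \nu$. Since $\nu$ deformation retracts onto $S^1$ and $H^2(S^1;\IZ) = 0$, the restriction $W|_\nu$ is trivial, so I may fix a trivialization $W|_\nu \cong \nu \times S^1$, which in particular trivializes $W|_T$. The bundle $W_k$ is then defined by gluing $W|_V$ on $V$ and the trivial bundle $\nu \times S^1$ on $\nu$ along $T$, using the original identification precomposed with the bundle automorphism corresponding (under $[T,S^1]=H^1(T;\IZ)$) to $k\mu^*$, where $\mu^* \in H^1(T;\IZ)$ is the cohomology class dual to the meridian $\mu = \{\mathrm{pt}\} \times \partial D^2 \subset T$.

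The Euler class computation will follow from the Mayer--Vietoris sequence for $N = V \cup \nu$: the connecting homomorphism $\partial \colon H^1(T;\IZ) \to H^2(N;\IZ)$ sends $\mu^*$ to $\PD([\gamma])$, because it factors through $H^2(\nu,T;\IZ)$ where $\mu^*$ maps to the Thom class of the normal bundle of $\gamma$, whose image in $H^2(N;\IZ)$ is by definition $\PD([\gamma])$. Hence $e(W_k) - e(W) = \partial(k\mu^*) = k\PD([\gamma])$. The same identification shows that $\PD([\gamma])$ lies in the image of $H^2(N, N \setminus \gamma;\IZ) \to H^2(N;\IZ)$ and hence restricts to zero in $H^2(N \setminus \gamma;\IZ)$. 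Since circle bundles over a manifold are classified up to isomorphism by their Euler class, the equality $e(W_k)|_{N\setminus\gamma} = e(W)|_{N\setminus\gamma}$ yields the required isomorphism $W_k|_{N\setminus \gamma} \cong W|_{N\setminus \gamma}$. No single step is technically hard; the one conceptual point deserving care is the identification of the Mayer--Vietoris connecting map with the composition realizing Poincaré duality on the meridian disk, since this is where the geometry of the embedded loop $\gamma$ enters the Euler class computation.
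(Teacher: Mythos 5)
Your proposal is correct and rests on the same core argument as the paper: circle bundles are classified by their Euler class, and $\PD([\gamma])$ restricts to zero in $H^2(N\setminus\gamma;\IZ)$ because it comes from $H^2(N, N\setminus\gamma;\IZ)$, which you verify via the Thom class/excision and the paper verifies via the Poincaré--Lefschetz duality square. The explicit clutching construction of $W_k$ and the Mayer--Vietoris computation of its Euler class are a self-contained bonus but not needed; the paper simply takes $W_k$ to be any bundle with Euler class $e + k\PD([\gamma])$, existence being immediate from the classification, and then compares restrictions — and in fact your own construction already produces the isomorphism over $V=N\setminus\mathring\nu(\gamma)$ (hence over $N\setminus\gamma$ by deformation retraction), so your closing appeal to the classification is redundant given the regluing.
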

\begin{proof}
Pick an oriented circle bundle $W_k$ with Euler class
$e(W_k) = e + k \PD([\gamma])$.
We have to prove that the pull-backs of $W_k$ and $W$ along the
inclusion $\Map i {N\setminus \gamma} {N}$ give isomorphic principal $S^1$-bundles.
As the Euler class classifies such isomorphism classes, it will be enough to prove 
$i^*e(W_k) = i^* e$. This is equivalent to $i^* \PD([\gamma]) = 0$. We pick an
open tubular neighbourhood $\nu(\gamma)$ of $\gamma$ and denote its boundary by
$\partial \nu(\gamma)$. As $N \setminus \gamma$ deformation retracts to
$N \setminus \nu(\gamma)$, it is sufficient to prove the equality
$i_\nu^* \PD([\gamma]) = 0$ with $\Map {i_\nu} {N \setminus \nu(\gamma)} N$ being
the inclusion.
This equality follows from the commutative diagram
\begin{center}
	\begin{tikzpicture}
		\matrix (m) [matrix of math nodes, row sep=3em, column sep=4em,
					text height=1.5ex, text depth=0.25ex]
  		{
		     H^2(N; \IZ) & H^2(N \setminus \nu(\gamma); \IZ) \\
		     H_1(N;\IZ) & H_1(N \setminus \nu(\gamma), \partial \nu(\gamma);\IZ)\\
		};
		\path[->] (m-1-1) edge node [left] {$\PD$} (m-2-1);
		\path[->] (m-1-1) edge node [above] {$i_\nu^*$} (m-1-2);
		\path[->] (m-1-2) edge node [right] {$\PD$} (m-2-2);
		\path[->] (m-2-1) edge (m-2-2);
	\end{tikzpicture}
\end{center}
which is a consequence of Poincaré-Lefschetz duality and excision, see \cite[Corollary VI.8.4]{Bredon93}.
\end{proof}

\begin{lem}\label{DisjointLoop}
Let $\Sigma \subset W$ be 
an oriented surface embedded in $W$. 
If there is an embedded
loop $\gamma$ such that its class $[\gamma] \in H_1(N;\IZ)$ is non-torsion
and $p^{-1}(\gamma)$ is disjoint from $\Sigma$,
then the surface $\Sigma$ is non-degenerate.
\end{lem}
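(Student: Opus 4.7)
The plan is to reduce to a well-chosen finite cover, modify the Euler class of the pulled-back circle bundle away from $\Sigma$ so that Baldridge's formula produces an uncancelled basic class, and then apply the adjunction inequality of Lemma~\ref{AdjunctionInequality}.

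First, I would apply the \emph{enough basic classes} hypothesis to the class $p_*\sigma \in H_2(N;\IZ)$. This yields a finite cover $h\colon P \to N$ with $b_1(P) \geq 3$ and a basic class $s \in \Bas P$ satisfying $\Thur{h^! p_* \sigma} = \langle s, h^! p_* \sigma \rangle$. Let $\widetilde h\colon h^* W \to W$ and $\widetilde p\colon h^* W \to P$ be the induced maps. By Lemma~\ref{FiniteCovers}(2), it suffices to prove that the preimage $\widetilde\Sigma := \widetilde h^{-1}(\Sigma)$ is non-degenerate as an embedded surface in $h^* W$. Pick any connected component $\widetilde\gamma$ of $h^{-1}(\gamma)$: it is embedded, its class $[\widetilde\gamma] \in H_1(P;\IZ)$ is non-torsion (since $h_* [\widetilde\gamma]$ is a positive multiple of $[\gamma]$), and the torus $\widetilde p^{-1}(\widetilde\gamma)$, being contained in $\widetilde h^{-1}(p^{-1}(\gamma))$, is disjoint from $\widetilde\Sigma$.

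Next, I would invoke Lemma~\ref{EulerClone} applied to $P$ and $\widetilde\gamma$ to produce, for each integer $k$, a circle bundle $p_k\colon W_k \to P$ with Euler class $e_k := e(h^* W) + k \cdot \PD([\widetilde\gamma])$ and an isomorphism $W_k|_{P \setminus \widetilde\gamma} \cong h^* W|_{P \setminus \widetilde\gamma}$. Because this isomorphism is supported off $p^{-1}(\widetilde\gamma)$, the surface $\widetilde\Sigma$ sits embedded in $W_k$ with unchanged self-intersection $\widetilde\sigma \cdot \widetilde\sigma$ and with $(p_k)_* \widetilde\sigma = \widetilde p_* \widetilde\sigma$, where $\widetilde\sigma := \widetilde h^! \sigma$. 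Fix a $\Spinc$-structure $\xi$ on $P$ with $c_1(\xi) = s$ and $\SW_P(\xi) \neq 0$. Baldridge's formula (Theorem~\ref{BaldrigeFormula}) gives
\begin{align*}
	\SW_{W_k}(p_k^* \xi) = \sum_{j \in \IZ} \SW_P(\xi + j e_k).
\end{align*}
Since $\SW_P$ has finite support and $\PD([\widetilde\gamma])$ is non-torsion in $H^2(P;\IZ)$, one can argue that for all but finitely many $k$ the only surviving summand is $j = 0$; fix such a $k$, so that $p_k^* s = c_1(p_k^* \xi)$ lies in $\Bas{W_k}$.

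Finally, the adjunction inequality of Lemma~\ref{AdjunctionInequality}, applicable since $b_1(P) \geq 3$, yields
\begin{align*}
	\chi_-(\widetilde\Sigma)
	&\geq |\widetilde\sigma \cdot \widetilde\sigma| + \langle p_k^* s, \widetilde\sigma \rangle \\
	&= |\widetilde\sigma \cdot \widetilde\sigma| + \langle s, (p_k)_* \widetilde\sigma \rangle \\
	&= |\widetilde\sigma \cdot \widetilde\sigma| + \Thur{\widetilde p_* \widetilde\sigma},
\end{align*}
using $\widetilde p_* \widetilde\sigma = h^! p_* \sigma$ from Lemma~\ref{FiniteCovers}(1) and the defining property of $s$ in the last step. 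Hence $\widetilde\Sigma$ is non-degenerate, and so is $\Sigma$ by Lemma~\ref{FiniteCovers}(2). The main obstacle I expect is justifying the claim that only finitely many $k$ cause cancellations in Baldridge's sum: for each of the finitely many basic classes $c \in \Bas P$ and each $j \neq 0$, one has to show that the equation $c_1(\xi) + 2j e_k = c$ in $H^2(P;\IZ)$ constrains $k$ to a finite set, which rests on the non-torsion-ness of $\PD([\widetilde\gamma])$ and demands some care with the torsion subgroup of $H^2(P;\IZ)$.
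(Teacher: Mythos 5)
Your proposal is correct and follows essentially the same strategy as the paper's proof; the only structural difference is presentational. The paper first establishes a special case (when a basic class $s\in\Bas N$ with $\langle s, p_*\sigma\rangle = \Thur{p_*\sigma}$ already exists on $N$, requiring no cover) using Lemma~\ref{EulerClone}, Baldridge's formula and the adjunction inequality, and then reduces the general case to it by passing to the cover supplied by the enough-basic-classes hypothesis via Lemma~\ref{FiniteCovers}. You simply inline these two steps, passing to the cover $P$ at the outset and running the Euler-class modification there directly, which is an equivalent organization of the same argument. Your flagged worry about cancellation in Baldridge's sum is legitimate but not a gap: since $\SW_P$ has finite support, there are finitely many candidate targets $\xi_i$; projecting to $H^2(P;\IQ)$ and pairing with a functional $\phi$ with $\phi(\PD[\widetilde\gamma])=1$ gives $|j|\cdot|k+\phi(e)| \leq \max_i|\phi(c_1(\xi_i)-c_1(\xi))|$, so for all $|k|$ large no integer $j\neq 0$ can solve the equation, uniformly in $j$; the paper asserts this with the same brevity ("we can choose $k$ large enough such that\dots"). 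Your minor reuse of $\widetilde\sigma$ for the class both in $H_2(h^*W)$ and in $H_2(W_k)$ should be separated notationally, as the paper does with $\sigma_k$, but this does not affect correctness.
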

\begin{proof}
We abbreviate the fundamental class of $\Sigma$ to $\sigma \in H_2(W;\IZ)$.
\begin{description}
\item[Special case] We first consider the case where a basic class $s \in \Bas(N)$ with
$\langle s, p_*\sigma \rangle = \Thur {p_*\sigma}$ exists.
Denote the Euler class of $W$ by $e(W)$.
Let $\gamma$ be a loop fulfilling the assumption. Pick circle bundles $W_k$ 
as given by Lemma \ref{EulerClone} and the corresponding isomorphism 
of principal $S^1$-bundles $W_k |_{N\setminus {\gamma}} \cong W|_{N\setminus {\gamma}}$.
Via the induced diffeomorphism of the total spaces we inherit an embedding
\begin{align*}
	\Sigma \subset W|_{N\setminus {\gamma}} = W_k|_{N\setminus {\gamma}} \subset W_k
\end{align*}
The fundamental class of $\Sigma$ in $W_k$ via this embedding is denoted by $\sigma_k \in H_2(W_k;\IZ)$.
Note that the self-intersection of $\Sigma$ only depends on a neighbourhood of $\Sigma$. Thus
the self-intersections $\sigma \cdot_W \sigma = \sigma_k \cdot_{W_k} \sigma_k$ agree.
Furthermore, we have $p_* \sigma = {p_k}_* \sigma_k$. Therefore it is enough to prove
that $\Sigma$ is non-degenerate in $W_k$ for a single $k \in \IZ$.

Abbreviate the Euler class $e(W_k) := e(W) + k \PD([\gamma])$ of $W_k$.
Because $[\gamma]$ is non-torsion
and $\SW_N$ has finite support, we can choose $k$ large enough such that
the following holds: For each $\xi \in \Spinc(N)$ there is at most one
$l \in \IZ$ such that $\SW_N(\xi + l e(W_k)) \neq 0$. Pick such a $k$.
Now we proceed similarly to the torsion case.
Pick a  $\xi \in \Spinc(N)$ with $\SW_N(\xi) \neq 0$ and $c_1(\xi) = s$.
By the Baldridge formula in Lemma \ref{BaldrigeFormula} and the choice of $k$,
we have that $\SW_{W_k} (p^*_k \xi) =\SW_N(\xi)$. Thus
the class
$p_k^*s$ is basic as well. The adjunction inequality of Lemma \ref{AdjunctionInequality}
gives
\begin{align*}
\chi_-(\Sigma) \geq \Comp(\sigma_k) &\geq | \sigma_k \cdot \sigma_k | + \langle p_k^*s, \sigma_k\rangle
= | \sigma \cdot \sigma | + \langle s, p_*\sigma \rangle
= | \sigma \cdot \sigma | + \Thur{p_* \sigma}.
\end{align*}
Thus $\Sigma$ is non-degenerate in $W$.
\item[General case] As $N$ has enough basic classes, there is a finite cover $\Map g M N$ and a
basic class $s\in \Bas(M)$ such that $\langle s, g^! p_* \sigma\rangle = \Thur {g^! p_* \sigma}$.
With Lemma \ref{FiniteCovers} we deduce that 
$\langle s, p_* \widetilde \sigma\rangle = \Thur {p_* \widetilde \sigma}$,
where $\widetilde \sigma$ denotes $\widetilde g^! \sigma$. 
The class $\widetilde \sigma$ is the fundamental class of $\widetilde g^{-1}(\Sigma)$. The surface
$\widetilde g^{-1}(\Sigma)$ is disjoint from $\widetilde p^{-1} (g^{-1}(\gamma)) = \widetilde g^{-1} (p^{-1}(\gamma))$.
Furthermore, $g^{-1}(\gamma)$ will be non-torsion as well.
Applying the case above to a component of $g^{-1}(\gamma)$ and to the surface $\widetilde g^{-1}(\Sigma)$,
we obtain the inequality
\begin{align*}
\chi_-(\widetilde g^{-1}(\Sigma)) \geq |\widetilde \sigma \cdot \widetilde \sigma| + \Thur{p_* \widetilde \sigma}.
\end{align*}
Thus $\widetilde g^{-1}(\Sigma)$ is non-degenerate and so is $\Sigma$ by Lemma \ref{FiniteCovers}.
\end{description}
\end{proof}

The alert reader might wonder about the existence of such an embedded loop as required by Lemma
\ref{DisjointLoop}. For a general surface $\Sigma$ such a loop will not exist. 
Nonetheless we describe a way how one can still obtain the 
inequality above. The idea involved here is that by attaching pipes to a surface
we can again ensure the existence of such an embedded loop. This increases the complexity of the surfaces.
But note that in finite covers we still only need one embedded loop whereas the complexity grows linearly.
This discrepancy allows us to obtain non-degenerateness as the limit of inequalities coming from finite covers.
The rest of the paper consists of this argument.
\subsection{The piping construction}
We first describe the piping construction and then finish the proof
of Theorem \ref{FinalEstimate} by considering the non-torsion case.

\begin{lem}\label{Tubing}
Let $\Sigma \subset W$ be an embedded surface
and $\gamma \subset N$ an embedded loop such that $p_* [\Sigma] \cdot_{N} [\gamma] = 0$.
Assume that $\Sigma$ and $p^{-1}(\gamma)$ intersect transversely in $m > 0$ points.
There is an embedded surface $\Sigma' \subset W$ with the properties:
\begin{enumerate}
\item $[\Sigma] = [\Sigma'] \in H_2(W;\IZ)$,
\item the surface $\Sigma'$ does not intersects $p^{-1}(\gamma)$, and
\item $\chi_-(\Sigma') \leq \chi_-(\Sigma) + m$.
\end{enumerate}
\end{lem}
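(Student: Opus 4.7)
The plan is to perform the standard \emph{tubing} (or piping) construction: pair up the intersection points of $\Sigma$ with $T := p^{-1}(\gamma)$ by sign, and replace a small disk around each pair with an annulus running along $T$.

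First, observe that the hypothesis forces $m$ to be even. Indeed, $\gamma \subset N$ is an embedded loop in an oriented $3$-manifold, so $p^{-1}(\gamma)$ is an embedded torus $T \subset W$, and standard properties of Poincar\'e duality and the projection formula give the equality
\begin{align*}
[\Sigma] \cdot_W [T] = \langle p^*\PD_N[\gamma], [\Sigma] \rangle = \langle \PD_N[\gamma], p_*[\Sigma]\rangle = p_*[\Sigma] \cdot_N [\gamma] = 0.
\end{align*}
Since $\Sigma$ meets $T$ transversely in $m$ points and these contribute with signs $\pm 1$ summing to zero, we must have $m = 2k$ with exactly $k$ positive and $k$ negative intersections. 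Pair them up arbitrarily as $(x_1, y_1), \ldots, (x_k, y_k)$, with opposite signs in each pair.

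Next I would perform the tubings one at a time. Since $\gamma$ is an embedded loop in an oriented $3$-manifold, its normal bundle in $N$ is trivial, hence so is the normal bundle of $T$ in $W$, and a tubular neighbourhood can be identified with $T \times D^2$. For a chosen pair $(x_j, y_j)$, pick an arc $\alpha \subset T$ from $x_j$ to $y_j$ which is disjoint from all other points of $\Sigma \cap T$ and, at later stages, from previously used arcs; this is possible because the torus minus finitely many points and finitely many arcs is still path-connected. Delete a small disk from $\Sigma$ around each of $x_j$ and $y_j$ and glue in the annulus $\alpha \times \partial D^2_\varepsilon \subset T \times D^2$; since the two intersections have opposite signs, the orientations match and the result is an oriented embedded surface. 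Iterating gives the surface $\Sigma'$, which by construction is disjoint from $T$.

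To verify the remaining two properties, note first that a single tubing changes $\Sigma$ by the closed sphere $D_1 \cup (\alpha \times \partial D^2_\varepsilon) \cup D_2$, which is precisely the boundary of the embedded $3$-ball $\alpha \times D^2_\varepsilon \subset W$; hence the homology class is preserved, and inductively $[\Sigma'] = [\Sigma]$. For the complexity estimate, a single tubing attaches a $1$-handle to $\Sigma$, so the Euler characteristic drops by $2$; depending on whether the tube joins two components or adds genus to one, a short case analysis on $\chi_- = \sum_i \max(0, -\chi(\Sigma_i))$ shows the contribution to $\chi_-$ increases by at most $2$. After $k = m/2$ tubings we obtain $\chi_-(\Sigma') \leq \chi_-(\Sigma) + 2k = \chi_-(\Sigma) + m$. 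The main subtlety is the disjointness bookkeeping: ensuring that the arcs, and hence the tubes, can be made mutually disjoint and disjoint from the remaining intersections; this is handled by choosing the arcs inductively and the tubular thickness $\varepsilon$ small enough at each step.
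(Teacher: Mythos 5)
Your proof is correct and rests on the same basic idea as the paper's: remove pairs of opposite-sign intersection points by piping. The two proofs differ, however, in where the pipe lives. You choose an arc $\alpha$ in the torus $T = p^{-1}(\gamma)$ itself and run the tube $\alpha \times \partial D^2_\varepsilon$ inside a tubular neighbourhood $T \times D^2$ of $T$, at constant radius $\varepsilon$; disjointness from $T$ is then automatic, at the cost of some bookkeeping to keep the arcs mutually disjoint and away from the other intersection points, and the argument never uses that $T$ is a preimage under $p$ (it works for any closed codimension-two submanifold). The paper instead chooses the arc $c$ downstairs in $\gamma \subset N$, between two intersection points that are adjacent along $\gamma$ and carry opposite signs, takes a solid-cylinder neighbourhood $C$ of $c$ in $N$, and lifts $C$ to a solid cylinder $P \subset W$ using a section of the circle bundle (available because $p$ is trivial over the contractible $C$); the annulus $\partial P \setminus \Sigma$ is then disjoint from $T$ because $P \cap T$ is exactly the core of $P$. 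The paper's arc choice also gives a slightly cleaner inductive scheme — one removes two adjacent points at a time, so one never has to observe upfront that $m$ is even — and its homology argument via the trace of the surgery records the same bounding $3$-chain you exhibit explicitly. Both routes give $\chi_-(\Sigma') \leq \chi_-(\Sigma) + 2$ per pipe and hence the stated bound, so your proof is a valid (and in fact somewhat more general) variant of the same construction.
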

\begin{proof}
We give a construction of a surface $\Sigma'$ which removes two points of intersection.
It will preserve the fundamental class $[\Sigma'] = [\Sigma]$ and 
increase the complexity by at most two: $\chi_-(\Sigma') \leq \chi_-(\Sigma) + 2$. 
This is done by embedding a pipe connecting two intersection points with opposite signs. 

The projected surface $p(\Sigma)$ and $\gamma$ intersect transversely as the pair $\Sigma$ 
and $p^{-1}(\gamma)$ does.
By a small perturbation of $\Sigma$ we assume that no two points of
$\Sigma$ get mapped under $p$ to the same intersection point. 
Pick two points of intersection $x,y$ of opposite signs which lay adjacent on $\gamma$.
Laying adjacent here means that there is  
an arc $c$ connecting $x,y$ in $\gamma$ which does not contain another point of
intersection. 

There is a neighbourhood of $c$ which is a solid cylinder $C$ intersecting
$p(\Sigma)$ in exactly two discs at $x$ and $y$.
As the circle bundle $\Map p W N$ is trivial over $C$ and discs are contractible,
we can lift the cylinder $C$ to a cylinder $P \subset W$ whose ends are discs in $\Sigma$.
This cylinder $P$ will be our pipe. Removing the two discs $P \cap \Sigma$ and adding
the rest of the boundary of $P$, which is an annulus, we obtain a surface $\Sigma'$.
As a result of this description, we obtain the inequality $\chi_-(\Sigma') \leq \chi_-(\Sigma) + 2$.
We have obtained $\Sigma'$ from $\Sigma$ by surgering along an $1$-handle.
As the pipe is embedded we obtain a map from the trace of the surgery to $W$ restricting
to the embeddings of $\Sigma$ and $\Sigma'$ on the two boundaries. 
Thus we get $[\Sigma]= [\Sigma']$.
\end{proof}
Now we can prove the non-torsion case without any restrictions.
By applying the lemma above in finite covers we obtain a family of
inequalities which in the limit give an inequality which 
proves non-degenerateness.
\begin{lem}\label{NonTorsionCase}
	Suppose the Euler class $e$ is non-torsion. Then each
	class $\sigma \in H_2(N;\IZ)$ is non-degenerate.
\end{lem}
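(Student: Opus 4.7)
The overall plan is to combine the piping construction of Lemma~\ref{Tubing} with the disjoint-loop criterion of Lemma~\ref{DisjointLoop}, using a sequence of finite abelian covers to amortise the cost of piping to zero. Let $\Sigma \subset W$ be any embedded surface representing $\sigma$; it suffices to show
\[ \chi_-(\Sigma) \geq |\sigma \cdot \sigma| + \Thur{p_*\sigma}. \]
Since $b_1(N) \geq 3$, the orthogonal complement of $p_*\sigma$ under the intersection pairing on $H_1(N;\IQ)$ has rank at least two, and therefore contains a non-torsion primitive integral class. I would realise it by an embedded loop $\gamma \subset N$ and arrange by general position that $\Sigma$ meets $p^{-1}(\gamma)$ transversely in some fixed number $m_0$ of points.

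For each $n\geq 1$ I would then construct a regular cyclic cover $g_n\colon M_n \to N$ of degree $n$ in which $[\gamma]$ lies in the kernel, using a surjection $H_1(N;\IZ) \twoheadrightarrow \ZkZ n$ that factors through the quotient $H_1(N;\IZ)/\langle[\gamma]\rangle$; this quotient has rank at least two, so such surjections exist for every $n$. Then $\gamma$ lifts to $n$ disjoint embedded loops in $M_n$, each mapping diffeomorphically onto $\gamma$. Pulling the circle bundle back produces the diagram of Lemma~\ref{FiniteCovers}, and a fibre count shows that for a single lift $\widetilde \gamma$ the surface $\widetilde g_n^{-1}(\Sigma)$ meets $\widetilde p_n^{-1}(\widetilde \gamma)$ transversely in exactly $m_0$ points: indeed, $\widetilde p_n$ restricts to a bijection $\widetilde g_n^{-1}(y) \cong g_n^{-1}(p(y))$ for each $y$, and precisely one of the $n$ preimages of each intersection point lies over $\widetilde \gamma$.

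Applying Lemma~\ref{Tubing} to $\widetilde g_n^{-1}(\Sigma)$ and $\widetilde \gamma$ --- whose homological hypothesis follows from the equality $\widetilde p_{n*}\widetilde g_n^! = g_n^!p_*$ of Lemma~\ref{FiniteCovers}(1) and the projection formula $g_n^!(p_*\sigma) \cdot [\widetilde \gamma] = p_*\sigma \cdot g_{n*}[\widetilde \gamma] = 0$ --- produces an embedded surface $\Sigma'_n$ representing $\widetilde g_n^!\sigma$, disjoint from $\widetilde p_n^{-1}(\widetilde \gamma)$, with
\[ \chi_-(\Sigma'_n) \leq \chi_-(\widetilde g_n^{-1}(\Sigma)) + m_0 = n\,\chi_-(\Sigma) + m_0. \]
Since $[\widetilde \gamma]$ is non-torsion in $M_n$ (its image under $g_{n*}$ is the non-torsion class $[\gamma]$), Lemma~\ref{DisjointLoop} then yields
\[ \chi_-(\Sigma'_n) \geq \bigl|\widetilde g_n^!\sigma \cdot \widetilde g_n^!\sigma\bigr| + \Thur{\widetilde p_{n*}\widetilde g_n^!\sigma} = n\bigl(|\sigma \cdot \sigma| + \Thur{p_*\sigma}\bigr), \]
using the scaling properties of Lemma~\ref{FiniteCovers}. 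Combining the two inequalities, dividing by $n$ and letting $n \to \infty$ absorbs the bounded term $m_0/n$ and gives the claim.

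The main difficulty I anticipate is the delicate bookkeeping that makes the piping cost vanish in the limit: the non-trivial point is that although piping an intersection pair into a pipe costs complexity, the cost is a fixed constant $m_0$ while the two sides of the target inequality both scale by the cover degree~$n$. This works precisely because we choose each abelian cover so that $\gamma$ lifts to $n$ disjoint loops; a single lift $\widetilde \gamma$ then witnesses only $m_0$ intersections in every cover, independently of~$n$.
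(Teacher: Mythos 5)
Your proof is correct and follows essentially the same route as the paper's: reduce to $b_1(N)\geq 3$, pick a non-torsion loop $\gamma$ with $p_*\sigma\cdot\gamma=0$, pass to cyclic covers of degree $n$ in which $\gamma$ lifts to $n$ disjoint loops so that a single lift still meets the lifted surface in only $m_0$ points, apply Lemma~\ref{Tubing} and then Lemma~\ref{DisjointLoop}, and let $n\to\infty$ to absorb the fixed piping cost. The only difference is cosmetic: you phrase the choice of $\gamma$ and the covers via the intersection pairing and surjections $H_1(N;\IZ)\twoheadrightarrow\ZkZ n$, while the paper fixes a single homomorphism $\alpha\in\Hom(\pi_1(N);\IZ)$ with $\alpha([\gamma])=0$ and reduces mod $n$.
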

\begin{proof}
	The manifold $N$ has enough basic classes. So by definition $N$ has
	a finite cover with $b_1(N) \geq 3$. As it is enough to show that
	the inequality holds in a finite cover, we may assume $N$ itself
	has $b_1(N) \geq 3$.

	Pick an embedded loop $\gamma$ such that $p_* \sigma \cdot \gamma = 0$
	and $[\gamma] \in H_1(N;\IZ)$ non-torsion. This is possible as
	$b_1(N) \geq 3$ and because of the fact that we can represent any element
	of $H_1(N;\IZ)$ by an embedded loop.
	Let $\alpha \in \Hom(\pi_1(N); \IZ)$ be non-zero with 
	$\alpha([\gamma]) = 0$.
	We consider the covers $\Map {g_k} {N_k} {N}$ induced by the kernel
	\begin{align*}
		\pi_1(N) &\rightarrow \ZkZ k\\
		l &\mapsto \alpha(l) 
	\end{align*}
	Note that $g_k^{-1}(\gamma)$ has $k$ components.
	Pick one lift $\gamma_k$ of $\gamma$ out of the these $k$ components.
	Denote the bundle pulled back with $g_k$ by $\Map {p_k} {W_k} {N_k}$. This induces
	a $k$-cover $\Map {\widetilde g_k} {W_k} {W}$.

	Choose an embedded surface $\Sigma \subset W$ of minimal complexity with $[\Sigma] = \sigma$.
	By general position we can assume that $\Sigma$ and $\Gamma := p^{-1} (\gamma)$ 
	have transverse intersection.
	Denote the geometric intersection number by $m$ which is just the cardinality of $\Sigma \cap \Gamma$. 
	Note that $\gamma_k$ is just
	a single lift and not the preimage of $\gamma$. Thus the geometric intersection number 
	between $\Sigma_k := \widetilde g_k^{-1} (\Sigma)$ and $\Gamma_k := p_k^{-1} (\gamma_k)$
	is also just $m$. This follows from the fact that $\widetilde g_k$ restricts to a diffeomorphism
	onto its image in a neighbourhood of $\Gamma_k$. We denote the fundamental class of $\Sigma_k$
	by $\sigma_k$.
	By Lemma \ref{Tubing} there is a surface $\Sigma_k'$ such that
	\begin{enumerate}
		\item $\Sigma_k'$ is disjoint from $\Gamma_k$,
		\item $[\Sigma_k'] = [\Sigma_k]$, and
		\item $\chi_-(\Sigma_k') \leq \chi_-(\Sigma_k) + m$.
	\end{enumerate}
	Now we apply Lemma \ref{DisjointLoop} to $\Sigma_k'$ and $\gamma_k$. We obtain the inequality 
	\begin{align*}
		\chi_-(\Sigma_k) + m \geq \chi_-(\Sigma_k') \geq |\sigma_k \cdot \sigma_k | 
			+ \Thur {{p_k}_* \sigma_k}.
	\end{align*}
	By the multiplicative properties we get
	\begin{align*}
		k \chi_-(\Sigma) +m &\geq k |\sigma \cdot \sigma| + k\Thur {p_* \sigma}.
	\end{align*}
	Thus we have shown the inequality 
	\begin{align*}
		\Comp(\sigma) + \frac{m}{k} &\geq |\sigma \cdot \sigma| + \Thur {p_* \sigma}
	\end{align*}
	for all $k \geq 1$ while keeping $m$ fixed.
	This implies the conclusion.	
\end{proof}

\bibliography{biblo}{}
\bibliographystyle{alpha}
\end{document}